\DeclareMathAlphabet{\mathpzc}{OT1}{pzc}{m}{it}
\newtheorem{theorem}{Theorem}[section]
\newtheorem{lemma}[theorem]{Lemma}
\newtheorem{definition}[theorem]{Definition}
\newtheorem{proposition}[theorem]{Proposition}
\newtheorem{remark}[theorem]{Remark}
\newtheorem{noname}[theorem]{}
\newtheorem{lemma-conjecture}[theorem]{Lemma--Conjecture}
\newtheorem{corollary}[theorem]{Corollary}
\newtheorem{example}[theorem]{Example}
\newtheorem{notation}[theorem]{Notation}
\newtheorem{question}[theorem]{Question}
\numberwithin{equation}{theorem}
\renewcommand{\mathcal}{\mathscr}
\newcommand{\Cal}{\mathcal}
\newcommand{\SN}{{\mathcal{N}}}
\newcommand{\SO}{{\mathcal{O}}}
\newcommand{\ST}{{\mathcal{T}}}
\newcommand{\SY}{{\mathcal{Y}}}
\newcommand{\PP}{\mathbf{P}}
\newcommand{\ZZ}{\mathbf{Z}}
\newcommand{\NN}{\mathbf{N}}
\title[Deformations of hyperelliptic 
polarized varieties]
{Deformations of hyperelliptic and generalized hyperelliptic 
polarized varieties}
\author{Purnaprajna Bangere}
\author{Francisco Javier Gallego}
\author{\\ Miguel Gonz\'alez}
\address{Department of Mathematics, University of Kansas}
\email{purna@math.ku.edu}
\address{Departamento de \'Algebra, 
Geometr\'ia y Topolog\'ia and 
Instituto de Matem\'atica Interdisciplinar, 
Universidad Complutense de Madrid}
\email{gallego@mat.ucm.es}
\address{Departamento de \'Algebra, Geometr\'ia y Topolog\'ia, Universidad Complutense de 
Madrid}
\email{miglezan@gmail.com}
\subjclass[2010]{14D15, 14B10, 14D06, 14J29, 14J32, 14J45}
\keywords{Deformations of polarized varieties, deformations of morphisms, 
hyperelliptic varieties, generalized hyperelliptic varieties, 
double structures, Fano varieties, Calabi-Yau varieties, 
general type varieties}
\thanks{The first author was partially 
supported by GRF funding 2142083-099. 
The second and the third authors were partially 
supported by grant MTM2015-65968-P and by UCM research group 910772. 
The first author  
is also grateful for the hospitality 
of Algebra, Geometry and Topology Department of Universidad Complutense de Madrid 
and for the support of grant MTM2015-65968-P during his visit.}
\begin{document}

\begin{abstract}
In this article we study the deformations of hyperelliptic
polarized varieties $(X,L)$ of dimension $m$ and 
sectional genus $g$ such that the image $Y$ of the morphism 
$\varphi$
induced by $|L|$ is smooth. 
If $L^m < 2g-2$, it is known that, by 
adjunction and the Clifford's
theorem, any deformation of $(X,L)$ is hyperelliptic. 
Thus, we focus on when $L^m=2g-2$ or $L^m=2g$. 
We prove that, if $(X,L)$ is Fano-K3, then, except when 
$Y$ is a hyperquadric, all deformations of $(X,L)$ are again 
hyperelliptic (if $Y$ is a hyperquadric, the general deformation
of $\varphi$ is an embedding). This contrasts with 
the situation of hyperelliptic canonical curves and hyperelliptic
K3 surfaces. 
If 
$L^m=2g$, then we prove that, in most cases, 
a general deformation 
of $\varphi$ is a finite morphism of degree $1$. 
This provides interesting examples of degree $2$ morphisms that 
can be deformed to morphisms of degree $1$.
We extend our results to so-called generalized hyperelliptic
polarized Fano, Calabi-Yau and general type varieties.
The solutions to these questions 
are closely intertwined 
with the existence or non existence of double structures on 
the algebraic varieties $Y$.  
We address this matter as well.
\end{abstract}

\maketitle

\section*{Introduction}

\smallskip

In this article we study the deformations of 
finite, degree $2$ morphisms $\varphi$. 
Two things may happen: 
\begin{enumerate}
 \item[(I)] either there are deformations of $\varphi$ 
 which are of degree $1$; or 
 \item[(II)] all deformations of $\varphi$ 
are of degree
$2$.
\end{enumerate}

\smallskip

\noindent The paradigmatic situation in which (I) happens is
when $\varphi$ is the canonical morphism of a hyperelliptic curve
of genus $g$, $g >2$. 
Thus, from the point of view of deformations, moduli and 
related issues, it is natural to look for a generalization of the notion 
of hyperellipticity to higher dimensions. There will naturally be 
more than one analogue of this notion and  we study such 
analogues from the perspective of deformations 
and moduli in this article. 
We first deal with the notion of hyperellipticity that carries over from the case of 
algebraic curves, before we introduce  another, more general notion 
(see Definition~\ref{defi.generalized.hyperelliptic}).
A canonical curve comes with a natural
polarization, namely, we have a polarized 
variety $(C,K_C)$. Therefore, if we want to know in which situations 
(I) occurs, we look at 
the concept of \emph{polarized hyperelliptic varieties}, 
which were first introduced by Fujita in \cite{Fujita}. 
We reformulate \cite[Definition 1.1]{Fujita} in a way more 
suitable for us:

\begin{definition}
 {\rm Let $X$ be a smooth variety and let $L$ be a polarization 
 (i.e., an ample line bundle) on $X$. We say that the 
 polarized variety $(X,L)$ is hyperelliptic if  
 $L$ is base--point--free and the complete linear series 
 $|L|$ induces a morphism of degree $2$ onto its image, which is 
 a variety $Y$ embedded in projective space as a variety 
 of minimal degree.}
\end{definition}

\noindent It is well known (see for instance \cite{EH}) that a 
variety of minimal degree is either linear projective space, 
the Veronese surface in $\PP^5$, a smooth hyperquadric, a
smooth rational normal scroll or a cone over any of those. 
For instance, 
polarized hyperelliptic curves are pairs $(C,L)$, where $C$ is 
hyperelliptic and $L=lg^1_2$, $1 \leq l \leq g$; in this case, 
$|L|$ induces a morphism of degree $2$ onto a rational normal curve
of degree $l$.

\smallskip
\noindent Then, starting with a polarized variety $(X,L)$ of 
dimension $m$, 
a deformation of the morphism $\varphi$ from $X$ to $\PP^N$, 
induced 
by $|L|$ 
gives rise to a deformation
$(\mathcal X, \mathcal L)$
of $(X,L)$ with $h^0(L_t)=h^0(L)$. 
 Conversely, let $(X,L)$ be a polarized hyperelliptic variety,
 let $\varphi$ be the morphism  from $X$ to $\PP^N$ induced 
by $|L|$ 
 and 
let 
 $(\mathcal X, \mathcal L)$ a deformation of $(X,L)$. 
 Since ampleness and base-point-freeness are open conditions, 
if $h^0(L_t)=h^0(L)$ (e.g., this happens, after shrinking
the base of $(\mathcal X, \mathcal L)$ if necessary
 if $H^1(L)=0$,
see \cite[Chapitre III, \S 7]{EGA} or \cite[Chapter 0, \S 5]{MFK},
then, from $\mathcal L$ we obtain  a deformation 
 of $\varphi$. 
 If $(X,L)$ is hyperelliptic, then 
 \begin{equation*}
 L^m=2h^0(L)-2m,
 \end{equation*}
so
 $h^0(\mathcal L_t)=h^0(L)$ is a necessary condition 
 for a deformation $(\mathcal X_t, \mathcal L_t)$  of $(X,L)$
 to be hyperelliptic. 
Thus we may rephrase the question on deformations 
of hyperelliptic
morphisms as a question on deformations of 
polarized hyperelliptic varieties:

\noindent 
\begin{question}
 {\rm Given a polarized hyperelliptic variety $(X,L)$ and 
considering deformations $(\mathcal X, \mathcal L)$
of $(X,L)$ with $h^0(\mathcal L_t)=h^0(L)$, 
are all of them hyperelliptic or are 
there nonhyperelliptic deformations, i.e., 
such that the morphism induced by 
$|\mathcal L_t|$ is of degree $1$?}
\end{question}

\noindent
We will denote by $g$ the \emph{sectional genus} of a 
polarized variety $(X,L)$.
The  
behavior of the deformations of $(X,L)$ is partly governed 
by the relationship between the degree $L^m$ and 
$g$.  This  parallels the well-known behavior of deformations of 
polarized hyperelliptic curves, which is also partly governed 
by the relationship between the degree of $L$  and the genus
$g$ of the curve. 
Indeed, the behaviour of curves 
has these straight forward implications 
for 
the deformations of hyperelliptic polarized varieties. We mention them before we proceed 
to non trivial cases that we handle in this article. 

\begin{remark}\label{remark.deformation.varieties} 
{\rm 
Let 
 $(X,L)$ be a polarized variety of dimension
$m$, $m >1$.
 \begin{enumerate}
  \item If 
 $(X,L)$ is hyperelliptic and
 $L^m < 2g-2$, all deformations $(\mathcal X, \mathcal L)$ with 
 $h^0(\mathcal L_t)$ constant
are hyperelliptic (see e.g. \cite[(7.5)]{Fujita}). 
This follows essentially from  
looking at the restriction of the polarization to a
general curve section of $(X,L)$ and 
from Clifford's theorem. 
\item If $(X,L)$ is hyperelliptic, $L^m=2g-2$ and $g=2$, 
then all deformations of $(X,L)$ are hyperelliptic. This is because the morphism 
induced by $|L|$ is two-to-one onto $\PP^m$, so  $H^1(L)=0$
(by projection formula, Leray spectral sequence and the vanishing of intermediate
cohomology in $\PP^m$), so $h^0(\mathcal L_t)=h^0(L)$ by 
\cite[Chapitre III, \S 7]{EGA} (see also \cite[Chapter 0, \S 5]{MFK}).  
Since $X$ is regular (by projection formula, Leray spectral sequence and the vanishing of intermediate
cohomology in $\PP^m$), 
then the claim follows from looking at the restriction of $L$ 
to the general curve 
section, which has genus $2$.  
Alternatively, the claim follows inmediately from $h^0(\mathcal L_t)=h^0(L)$ 
and $\mathcal L_t^m=L^m=2$.
\item If $(X,L)$ is hyperelliptic, $L^m=2g$ and $g=1$, 
then all deformations of $(X,L)$ are hyperelliptic. 
This follows as in (2), using now Riemann-Roch 
for the first of the two arguments. 
\item If $L^m > 2g$ and $X$ is regular, by looking again at the restriction of $L$ 
to a general curve 
section and 
Riemann--Roch, 
we conclude that $(X,L)$ is nonhyperelliptic.
 \end{enumerate}}
\end{remark}

\smallskip
\noindent 
Therefore the cases in which we can expect the deformations 
of a hyperelliptic polarized variety $(X,L)$ to
be nonhyperelliptic are if $L^m=2g-2$ and $g>2$ or $L^m=2g$ and $g>1$.
A (higher dimensional) \emph{Fano--K3} variety $(X,L)$
(see Definition~\ref{defi.Fano-K3}) 
has a  general surface section which is 
a polarized K3 surface.
If $(X,L)$ is a
Fano-K3 variety, then $L^m=2g-2$. Conversely, 
if $L^m=2g-2$, $m \geq 3$ and $H^1(L)=0$, then 
$(X,L)$ is a Fano-K3 variety (see
\cite[Propositions 5.18 (3), 5.19 and (6.7)]{Fujita}).
It is 
well known that, unless $g=2$,  
the general deformation of a hyperelliptic K3 surface is 
nonhyperelliptic 
(this is because, if $g >2$, hyperelliptic K3 surfaces 
form a closed locus
in the moduli space of polarized K3 surfaces).
For $m > 2$, 
the study of the deformations of Fano--K3 varieties 
$(X,L)$ is partially addressed
by Fujita (see \cite[Remark 7.7, Corollary 7.14]{Fujita}), 
who finds 
that there is a hyperelliptic deformation of $(X,L)$. 
In Section~\ref{Section.Fano-K3}  we 
settle this question completely and our 
result contrasts with the results for 
surfaces and curves which are general sections of $(X,L)$ 
(those sections are, 
respectively, polarized hyperelliptic K3 surfaces and canonically polarized
hyperelliptic curves). Indeed, 
if the image $i(Y)$ of 
the morphism 
induced by $|L|$ is smooth, 
we show that any deformation of a Fano--K3 variety
$(X,L)$ is hyperelliptic except if $Y$ is embedded as  
 a  hyperquadric (see 
Theorems~\ref{thm.Fano-K3.deform.hyperelliptic} and 
\ref{thm.Fano-K3.deform.nonhyperelliptic}). On the contrary,  
if $g >2$ and $(X,L)$ is either a canonically
 polarized hyperelliptic curve or a polarized hyperelliptic K3 surface, then 
 the general 
 deformation of $(X,L)$ (with $h^0(\mathcal L_t)$ 
 constant throughout the deformation if $X$ is a curve)
 is nonhyperelliptic. This is because, 
if $g > 2$, then 
 being hyperelliptic is not a numerical condition,   
 but a closed condition on both the moduli of curves and the moduli
 of polarized K3 surfaces.

\smallskip
\noindent The interesting case $L^m=2g$ 
was not previously known. 
In Section~\ref{Section.2g} 
we settle the question 
completely when $Y$ is smooth. We prove that, except for one case, 
(see Theorems~\ref{thm.2g.deform.nonhyperelliptic}, 
\ref{thm.2g.deform.nonhyperelliptic2},
and
\ref{thm.2g.deform.hyperelliptic})
$(X,L)$ deforms to a nonhyperelliptic polarized variety, 
in clear contrast with 
Fano-K3 varieties. If $Y$ is as in (2e) of 
Proposition~\ref{prop.2g.classification} we  overcome an added difficulty:  
in this case (see 
Proposition~\ref{prop.2g.hom.not.0}), 
it is not clear whether  
$H^1(\mathcal N_\varphi)$  vanishes, and hence, whether $\varphi$ is
unobstructed or not. Still, we find a way 
to study the deformations of $\varphi$ (see the proof of  
Theorem~\ref{thm.2g.deform.nonhyperelliptic2}).

\smallskip
\noindent 
The outcome of all explained so far is that (I) seems to be more
rare than (II), as only may occur if $L^m=2g-2$ or $L^m=2g$. 
In addition, as dimension increases, (I) becomes even more rare
(see Theorem~\ref{thm.Fano-K3.deform.hyperelliptic}) and also, 
families of hyperelliptic varieties become fewer than 
one would expect
(see Propositions~\ref{prop.FanoK3.classification} and 
\ref{prop.2g.classification}).

\smallskip
\noindent 
The above results compel us to
investigate a deeper structural reason 
on why these phenomena happen.
In this direction, we introduce another, broader 
generalization of hyperellipticity: 
 
\begin{definition}\label{defi.generalized.hyperelliptic}
{\rm Let $(X,L)$ a smooth polarized variety with $L$ base--point--free. 
Let
the morphism from $X$ to $\PP^N$, induced by $|L|$ be of degree $2$ 
onto its image $i(Y)$ ($i$ is the embedding of $Y$ in $\PP^N$). 
Let the variety $Y$ be smooth and isomorphic to any of this: 
\begin{enumerate}
 \item projective space; 
 \item a hyperquadric; 
 \item a projective bundle over $\PP^1$. 
\end{enumerate}
The variety $Y$ is \emph{not necessarily} embedded by $i$ 
as a variety 
of minimal degree in $\PP^N$. 
Then 
we say that 
$(X,L)$ is a 
generalized hyperelliptic polarized variety.}
\end{definition}

\noindent In Section~\ref{Section.Fano-K3} and 
\ref{Section.generalized.hyperelliptic} we show that, 
in most cases,  
the deformations of 
generalized hyperelliptic
 Fano, Calabi--Yau and general type polarized varieties are 
 morphisms of degree $2$ (see Theorems~\ref{thm.Fano.deform}, 
 \ref{thm.deformation.CY} and \ref{thm.deformation.general.type}). 
In particular, the results on Calabi--Yau varieties 
are in sharp contrast to the results for their lower 
dimensional analogue, namely, K3 surfaces. This contrast can also be traced
 to the question of the existence of double structures. 
Indeed, in \cite{K3carpets} and 
\cite{BMR} it is shown 
the existence of locally Gorenstein multiplicity two structures (named,
in this case, carpets and, in general, ribbons; see e.g. \cite{BE} for definitions), 
nonsplit and 
with the same invariants as smooth 
K3 surfaces, supported on rational normal scrolls and, more generally, 
on general embeddings of Hirzeburch surfaces 
and other rational surfaces. Contrarily, 
we show in  Corollary~\ref{cor.CY.ribbons} that there are no nonsplit
ribbons
with  Calabi--Yau invariants supported on varieties $Y$ as in 
Definition~\ref{defi.generalized.hyperelliptic}.

\smallskip
\noindent Deformations of finite morphisms 
 of degree $2$ 
 are linked to the existence or non existence of  
 ribbons 
with right invariants.  Indeed, the relationship between the
 deformation of morphisms and 
these double 
 structures appears in \cite{Fong}, \cite{ropes}, \cite{Enriques}, 
 \cite{smoothing} or \cite{canonical.double.covers.surfaces},  
 to mention just a few references. 
 This relationship is at the very heart of 
the proofs of results like \cite[Theorem 1.4]{deformation} and 
\cite[Theorem 1.5]{smoothing}, that will be used later on
(for a deeper insight on all this, 
see \cite{Gonzalez}). Thus, in this article,  we do not only address
the problem of the existence 
 of Calabi--Yau ribbons but 
we also look at  the existence of ribbons with
other invariants. In Corollaries~\ref{cor.Fano.ribbons} and 
\ref{cor.general.type.ribbons} we prove 
(with one 
exception, see Example~\ref{example.smoothable.ribbon}), that 
Fano 
and general type nonsplit ribbons, 
supported on varieties $Y$ as in 
Definition~\ref{defi.generalized.hyperelliptic}, do not exist.

\section{General results on deformations of morphisms}

\noindent In this section we prove several general results 
on deformations of 
morphisms, that we will apply in the remaining of the article.
First we make clear what we will mean when we talk of
a deformation of a morphism: 

\begin{definition}
 {\rm 
 Let $X$ be an algebraic, projective variety 
 and let 
 \begin{equation*}
  \varphi: X \longrightarrow \PP^N 
 \end{equation*}
 be a  morphism.
By a deformation of $\varphi$ 
 we mean a  flat family of morphisms 
 \begin{equation*}
 \Phi: \mathcal X \longrightarrow \mathbf P^N_Z
 \end{equation*}
 over a smooth, irreducible algebraic 
 variety $Z$ (i.e., $\Phi$ is a $Z$--morphism 
 for which $\mathcal X \longrightarrow Z$ is proper, 
flat and surjective) with a distinguished point $0 \in Z$
  such that 
\begin{enumerate}
\item $\mathcal X$ is irreducible and reduced;
\item $\mathcal X_0=X$ and $\Phi_0=\varphi$.
\end{enumerate}

\smallskip
\noindent
Unless otherwise specified, 
when we say that certain property (*) is satisfied by a deformation 
$\Phi$, we will mean that
there exists an open neighborhood $U$ 
of $0$ 
such that, for all $z \in U$, property (*) is satisfied by the 
\emph{fibers}  
\begin{equation*}
 \Phi_z: \mathcal X_z \longrightarrow \PP^N
\end{equation*}
 of 
$\Phi$.

 \smallskip
\noindent
That certain property (*) is satisfied 
by a general deformation will mean that 
there exists an open set $\widehat U$ of the base of an
algebraic formally semiuniversal deformation $\Phi$ of $\varphi$
such that, for all $z \in \widehat U$, the morphism 
$\Phi_z$ satisfies (*).

\smallskip
\noindent
 We will use analogous definitions 
for a deformation 
of a variety $X$ or for a deformation of 
a polarized variety $(X,L)$.}
\end{definition}

\begin{noname}\label{setup}
{\bf  Notation and conventions:} 
{\rm Throughout this article, unless otherwise stated,
we will use the following notation and conventions:
\begin{enumerate}
\item  We will work over an algebraically closed field 
$\mathbf k$ of characteristic $0$.
\item  $X$ and $Y$ will denote smooth, irreducible, algebraic 
projective varieties.
\item $\omega_X$ and $\omega_Y$ denote respectively 
the canonical bundle of $X$ and $Y$; $K_X$ and $K_Y$ 
denote respectively 
the canonical divisor of $X$ and $Y$. We use the same notation for
all the other varieties that appear. 
\item  $i$ will denote a projective embedding $i: Y \hookrightarrow \mathbf P^N$. 
In this case, $\mathcal I$  will denote the ideal sheaf of $i(Y)$ 
in $\mathbf P^N$. We will often abridge $i^*\mathcal O_{\mathbf P^N}(1)$ as $\mathcal O_Y(1)$.
\item $\pi$ will denote a finite morphism $\pi: X \longrightarrow Y$ of degree $n = 2$; 
in this case, $\mathcal E$ will denote  the trace--zero module of $\pi$ 
($\mathcal E$ is a line bundle on $Y$) and $B$ will be the branch divisor of $\pi$ (then 
$\mathcal E^{-2}=\mathcal O_Y(B)$). 
\item $\varphi$ will denote a projective morphism $\varphi: X \longrightarrow \mathbf P^N$ 
such that $\varphi= i \circ \pi$.
\end{enumerate}}
\end{noname}

\noindent We introduce a homomorphism defined in~\cite[Proposition 3.7]{Gonzalez}:

\begin{proposition}\label{morphism.miguel}
Let $\mathcal N_\pi$ and  $\mathcal N_\varphi$ be 
respectively
the normal bundles of $\pi$ and $\varphi$ and let 
$\mathcal N_{i(Y),\mathbf P^N}$ be the normal bundle of 
$i(Y)$ in $\PP^N$. 
There exists a homomorphism
\begin{equation*}
 H^0(\mathcal N_\varphi) \overset{\Psi}\longrightarrow 
 \mathrm{Hom}(\pi^*(\mathcal I/\mathcal I^2), \mathcal O_X),
\end{equation*}
that appears when taking cohomology on 
the exact sequence 
\begin{equation}\label{eq.normal.bundles}
0 \longrightarrow \mathcal N_\pi \longrightarrow \mathcal N_\varphi 
\longrightarrow \pi^*\mathcal N_{i(Y),\mathbf P^N} \longrightarrow 0,
\end{equation}
that arises from
commutative 
diagram~\cite[(3.3.2)]{Gonzalez}. Since
\begin{equation*}
\mathrm{Hom}(\pi^*(\mathcal I/\mathcal I^2), \mathcal O_X)=
\mathrm{Hom}(\mathcal I/\mathcal I^2, \pi_*\mathcal O_X)=\mathrm{Hom}(\mathcal I/\mathcal I^2, \mathcal O_Y) 
\oplus \mathrm{Hom}(\mathcal I/\mathcal I^2, \mathcal E),
\end{equation*}
the homomorphism $\Psi$ has two components
\begin{eqnarray*}
H^0(\mathcal N_\varphi) & \overset{\Psi_1}  \longrightarrow & \mathrm{Hom}(\mathcal I/\mathcal I^2, \mathcal O_Y) \cr
H^0(\mathcal N_\varphi) & \overset{\Psi_2} 
\longrightarrow & \mathrm{Hom}(\mathcal I/\mathcal I^2, \mathcal E).
\end{eqnarray*}
Taking cohomology on \eqref{eq.normal.bundles}, 
the homomorphism $\Psi$ fits in this long exact sequence of cohomology:
 \begin{multline}
\label{eq.normal.bundles.cohomology}
 H^0(\mathcal N_\varphi) 
 \overset{\Psi}
 \longrightarrow
 \mathrm{Hom}(\mathcal I/\mathcal I^2,
\mathcal O_Y)\oplus
 \mathrm{Hom}(\mathcal I/\mathcal I^2,
\mathcal E)  \overset{\epsilon}\longrightarrow \\
H^1(\mathcal N_\pi)  
\overset{\eta}\longrightarrow  
H^1(\mathcal N_\varphi) 
\longrightarrow
 \mathrm{Ext}^1(\mathcal I/\mathcal I^2,
\mathcal O_Y)\oplus
 \mathrm{Ext}^1(\mathcal I/\mathcal I^2,
\mathcal E). 
\end{multline}
\end{proposition}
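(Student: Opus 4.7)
The plan is to start from the short exact sequence of normal bundles for the composition $\varphi = i \circ \pi$, take cohomology, and then identify the cohomology of $\pi^*\mathcal{N}_{i(Y),\PP^N}$ via the projection formula and the splitting $\pi_*\mathcal{O}_X = \mathcal{O}_Y \oplus \mathcal{E}$.

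First, I would invoke the short exact sequence
\begin{equation*}
0 \longrightarrow \mathcal{N}_\pi \longrightarrow \mathcal{N}_\varphi \longrightarrow \pi^*\mathcal{N}_{i(Y),\PP^N} \longrightarrow 0
\end{equation*}
coming from the cited commutative diagram in \cite{Gonzalez}; this is a formal consequence of the factorization of the differential $T_X \to \varphi^*T_{\PP^N}$ through $\pi^*T_Y$ together with the fact that the cokernel of $T_Y \to i^*T_{\PP^N}$ is $\mathcal{N}_{i(Y),\PP^N}$. Taking global sections on this sequence immediately produces the skeleton of the long exact sequence claimed in the proposition, with the middle terms being $H^i(\mathcal{N}_\pi)$ and $H^i(\mathcal{N}_\varphi)$.

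Second, I would identify the right-hand cohomology groups. Since $Y$ is smooth and $i$ is a closed embedding of a smooth variety into projective space, $\mathcal{I}/\mathcal{I}^2$ is locally free on $Y$ and $\mathcal{N}_{i(Y),\PP^N}=\mathcal{H}om(\mathcal{I}/\mathcal{I}^2, \mathcal{O}_Y)$; pulling back along $\pi$ (which preserves local freeness) yields $\pi^*\mathcal{N}_{i(Y),\PP^N} = \mathcal{H}om(\pi^*(\mathcal{I}/\mathcal{I}^2), \mathcal{O}_X)$. Because $\pi^*(\mathcal{I}/\mathcal{I}^2)$ is locally free on $X$, the local $\mathcal{E}xt$'s into $\mathcal{O}_X$ vanish in positive degree, so the local-to-global Ext spectral sequence degenerates and gives $H^i(\pi^*\mathcal{N}_{i(Y),\PP^N}) = \mathrm{Ext}^i(\pi^*(\mathcal{I}/\mathcal{I}^2), \mathcal{O}_X)$ for $i=0,1$. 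Now $(\pi^*,\pi_*)$-adjunction, combined with the trace-zero decomposition $\pi_*\mathcal{O}_X = \mathcal{O}_Y \oplus \mathcal{E}$ for the degree-two finite cover $\pi$, turns this into
\begin{equation*}
\mathrm{Ext}^i(\mathcal{I}/\mathcal{I}^2, \mathcal{O}_Y)\oplus \mathrm{Ext}^i(\mathcal{I}/\mathcal{I}^2, \mathcal{E}),
\end{equation*}
which for $i=0$ exhibits $\Psi$ as a pair $(\Psi_1, \Psi_2)$ and for $i=1$ produces the last term in the long exact sequence of the statement.

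The main obstacle is really just the sequence (1.2.1): one needs a functorial notion of normal bundle for an arbitrary morphism and the compatibility that makes the diagram from \cite[(3.3.2)]{Gonzalez} commute, so that the cokernel of $\mathcal{N}_\pi\to \mathcal{N}_\varphi$ is identified canonically with $\pi^*\mathcal{N}_{i(Y),\PP^N}$. Once this is granted, the rest is bookkeeping; the only point one has to double-check is that the affineness of $\pi$ makes $\pi_*$ exact, so that the splitting $\pi_*\mathcal{O}_X=\mathcal{O}_Y\oplus\mathcal{E}$ transports cleanly through cohomology and genuinely yields the two-summand decomposition of $\Psi$ and of the $\mathrm{Ext}^1$ term.
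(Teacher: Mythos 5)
Your argument is correct and is essentially the same as the paper's, which offers no independent proof but simply quotes the construction of \cite[Proposition 3.7 and diagram (3.3.2)]{Gonzalez}: the exact sequence of normal sheaves for $\varphi=i\circ\pi$, its long cohomology sequence, and the identification of the terms on the right via local freeness of $\mathcal I/\mathcal I^2$, adjunction for the finite (hence affine) morphism $\pi$, and the splitting $\pi_*\mathcal O_X=\mathcal O_Y\oplus\mathcal E$. Your explicit remarks on the degeneration of the local-to-global Ext spectral sequence and the exactness of $\pi_*$ are exactly the bookkeeping implicit in that citation, so nothing is missing.
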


\noindent We now prove a useful generalization of 
\cite[Theorem 2.6]{deformation}. 
The proof given here also settles 
an unclear point in the proof given in
\cite{deformation}.

\begin{theorem}\label{Psi2=0.3}
With the notation of \ref{setup},  
assume furthermore that
\begin{enumerate}
\item the line bundle $\mathcal E$ 
can be lifted to a line bundle on any 
infinitesimal deformation of $Y$ (or, equivalently, the map $H^1(\mathcal T_{Y}) \to H^2(\mathcal O_{Y})$ induced by the cohomology 
class $c_1(\mathcal E) \in H^1(\Omega_{Y})$ 
via cup product and duality is zero; $\Omega_Y$
is the cotangent bundle
of $Y$, see \cite[Theorem 3.3.11 (iii)]{Sernesi}); 
\item $H^1(\mathcal E^{-2})=0$;
\item $H^1(\mathcal O_Y)=0$; 
\item the variety $Y$ is unobstructed in $\mathbf P^N$ 
(i.e., the base of the universal deformation space of $Y$ in $\mathbf P^N$ 
is smooth); 
and 
\item $\Psi_2 = 0$.
\end{enumerate}
Then
\begin{enumerate}
\item[(a)] $\varphi$ are unobstructed, and
\item[(b)] there is an
algebraic formally semiuniversal deformation for $\varphi$ 
which is a (finite) morphism of degree $2$ over a deformation 
of $i(Y)$ in $\PP^N$; i.e., 
any deformation of $\varphi$ is a (finite)  
morphism of degree $2$ over a deformation 
of $i(Y)$ in $\PP^N$. 
\end{enumerate}
\end{theorem}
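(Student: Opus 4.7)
The strategy is to construct by hand an explicit algebraic deformation $\Phi \colon \mathcal X_W \to \PP^N_W$ of $\varphi$ over a smooth base $W$, so that by construction every fibre $\Phi_w$ is a finite degree $2$ morphism onto a deformation of $i(Y)$ inside $\PP^N$, and then to show that this family is formally semiuniversal at $\varphi$ by comparing Kodaira--Spencer maps via the exact sequence \eqref{eq.normal.bundles.cohomology} and the vanishing $\Psi_2 = 0$. Once this is achieved, (a) follows from the smoothness of $W$ and (b) from the construction itself.

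\emph{Building the family.} Hypothesis (4) gives a smooth algebraic deformation $\mathcal Y_{W_1}\hookrightarrow \PP^N_{W_1}$ of $i(Y)$ over a smooth pointed base $(W_1, 0)$ with Kodaira--Spencer map surjecting onto $\mathrm{Hom}(\mathcal I/\mathcal I^2, \mathcal O_Y)$. Hypothesis (1) makes $\mathcal E$ lift to every infinitesimal fibre of $\mathcal Y_{W_1}$, and (3) makes the lift unique, so $\mathcal E$ extends to a line bundle $\mathcal E_{W_1}$ on $\mathcal Y_{W_1}$. Hypothesis (2), combined with cohomology and base change, implies that after shrinking $W_1$ the direct image of $\mathcal E_{W_1}^{-2}$ along the structural morphism is locally free of rank $h^0(\mathcal E^{-2})$ and its formation commutes with base change. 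Let $W$ be the open subvariety of the total space of this vector bundle parameterizing sections whose zero locus is a reduced branch divisor. Then $W$ is smooth, and on the pullback $\mathcal Y_W$ we have a universal section of $\mathcal E_W^{-2}$ defining a flat family of degree $2$ cyclic covers $\mathcal X_W \to \mathcal Y_W$. Composing with the inclusion into $\PP^N_W$ produces the desired deformation $\Phi$.

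\emph{Matching tangent spaces.} By construction, $T_0 W$ fits in
\begin{equation*}
0 \longrightarrow H^0(Y,\mathcal E^{-2}) \longrightarrow T_0 W \longrightarrow \mathrm{Hom}(\mathcal I/\mathcal I^2, \mathcal O_Y) \longrightarrow 0.
\end{equation*}
On the other hand, taking cohomology in \eqref{eq.normal.bundles} together with the standard identification $H^0(\mathcal N_\pi) \cong H^0(Y,\mathcal E^{-2})$ (first-order deformations of the double cover $\pi$ with target $Y$ held fixed are sections of $\mathcal E^{-2}$) give
\begin{equation*}
0 \longrightarrow H^0(Y,\mathcal E^{-2}) \longrightarrow H^0(\mathcal N_\varphi) \overset{\Psi_1}\longrightarrow \mathrm{Hom}(\mathcal I/\mathcal I^2,\mathcal O_Y),
\end{equation*}
where we have already used $\Psi_2 = 0$ to drop the second summand from the image of $\Psi$. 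It remains to show that $\Psi_1$ is surjective; by exactness in \eqref{eq.normal.bundles.cohomology} this is the vanishing of $\epsilon$ restricted to $\mathrm{Hom}(\mathcal I/\mathcal I^2,\mathcal O_Y)$. This restriction is precisely the obstruction to lifting the cover $\pi$ along a first-order deformation of $i(Y)$ in $\PP^N$; hypotheses (1), (2) and (3) together assert that the data defining $\pi$ (the line bundle $\mathcal E$ and the branch section $B \in H^0(\mathcal E^{-2})$) extend to any such infinitesimal deformation, so this obstruction vanishes. Hence the Kodaira--Spencer map $T_0 W \to H^0(\mathcal N_\varphi)$ is an isomorphism, $\Phi$ is formally semiuniversal, and (a), (b) follow.

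\emph{Main obstacle.} The delicate point is the last one: showing that the map $\mathrm{Hom}(\mathcal I/\mathcal I^2, \mathcal O_Y) \to H^1(\mathcal N_\pi)$ induced by $\epsilon$ vanishes. This is precisely the unclear point in \cite{deformation} mentioned in the statement; the argument must carefully relate the formal obstruction theory of the pair $(\pi, i)$ to the explicit liftability of the line bundle and the branch section guaranteed by (1)--(3), a subtle translation between formal cohomological data and the concrete geometric construction of the cyclic cover.
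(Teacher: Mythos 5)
Your overall strategy is the same as the paper's (build the relative double covers over the universal embedded deformation of $i(Y)$, using (1)+(3) to lift $\mathcal E$ uniquely and (2) to lift sections of $\mathcal E^{-2}$, then compare the Kodaira--Spencer map with $H^0(\mathcal N_\varphi)$ through \eqref{eq.normal.bundles.cohomology} and $\Psi_2=0$), but the crucial step is not actually proved. You reduce everything to the claim that the component of $\epsilon$ on $\mathrm{Hom}(\mathcal I/\mathcal I^2,\mathcal O_Y)$ vanishes because it ``is precisely the obstruction to lifting the cover $\pi$ along a first-order deformation of $i(Y)$,'' and then you yourself defer the justification of that identification to a ``subtle translation'' you do not carry out. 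That identification is exactly the content that needs proof: one must check that the cohomological connecting map in \eqref{eq.normal.bundles.cohomology} is compatible with the geometric construction ``deform $(Y\subset\PP^N,\mathcal E,B)$, take the double cover, compose with the embedding.'' The paper does this by introducing the functor $\mathrm{Def}_{(Y,B)}$, the explicit morphism of functors $F:\mathrm{Def}_{(Y,B)}\to\mathrm{Def}_\varphi$, and the commutative diagram \eqref{smart.diagram}, whose commutativity rests on \cite[Theorem 3.8 (2), Propositions 3.11, 3.12]{Gonzalez}; from that diagram and $\Psi_2=0$ one gets that $\mathrm{d}F$ is bijective (equivalently, the surjectivity of $\Psi_1$ you want). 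Without some substitute for this compatibility statement, your proof of semiuniversality (and hence of (a) and (b)) has a hole at its center --- indeed at the very point this theorem was written to clarify.

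There is also a concrete error in the tangent-space comparison. You identify $H^0(\mathcal N_\pi)$ with $H^0(\mathcal E^{-2})$, but $\mathcal N_\pi\simeq\mathcal O_B(B)$ (via ramification $\simeq$ branch, see \cite[(2.11)]{deformation}), so with $H^1(\mathcal O_Y)=0$ one has $H^0(\mathcal N_\pi)\simeq H^0(\mathcal E^{-2})/H^0(\mathcal O_Y)$: rescaling the branch section $r$ gives an isomorphic cover, hence the trivial deformation of $\varphi$. Since you take $W$ to be (an open set in) the total space of the vector bundle $p_*\mathcal E_{W_1}^{-2}$ rather than its projectivization, your Kodaira--Spencer map $T_0W\to H^0(\mathcal N_\varphi)$ has a one-dimensional kernel in the fibre direction and cannot be an isomorphism; your family is at best versal, not semiuniversal as claimed in (b). The fix is the one the paper uses: work with $\PP(p_*\mathbb L^{-2})$, whose fibre tangent space is $H^0(\mathcal E^{-2})/\langle r\rangle\simeq H^0(\mathcal O_B(B))$, so that the tangent sequence matches \eqref{tangent.D} and \eqref{tangent.V} and the comparison with $0\to H^0(\mathcal N_\pi)\to H^0(\mathcal N_\varphi)\overset{\Psi_1}\to\mathrm{Hom}(\mathcal I/\mathcal I^2,\mathcal O_Y)$ can be an isomorphism.
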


\begin{proof}
	Let $\mathrm{Def}_{(Y,B)}$ be the functor of 
	deformations of the pair 
	$(Y \subset \PP^N, B \in |\mathcal E^{-2} |)$.

	\smallskip
	\noindent
	Since $H^1(\mathcal O_Y)=0$, for any embedded infinitesimal 
	deformations $Y \subset \bar Y \subset \widetilde{Y}$ we see that 
	$\mathrm{Pic}(\widetilde Y) \hookrightarrow \mathrm{Pic}(\bar Y) 
	\hookrightarrow \mathrm{Pic}(Y) $. Then, from $(1)$ we see that 
	for any $Y \subset \bar Y$ there is a unique lifting 
	$(\bar Y, \bar{\mathcal E})$ of $(Y, \mathcal E)$. Then, 
	by uniqueness, for the liftings $(\widetilde Y, 
	\widetilde{\mathcal E})$ and  $(\bar Y, \bar{\mathcal E})$ 
	of $(Y, \mathcal E)$ we have $\widetilde{\mathcal E}_{|\bar Y}= 
	\bar{\mathcal E}$. Thus we see that the local Hilbert functor of 
	deformations of $B$ in $Y$ is 
	a subfunctor of $\mathrm{Def}_{(Y,B)}$, whose tangent space is

	\smallskip
	\noindent
	Let $H_Y^{ \mathbf P^N}$ denote the local Hilbert functor of $Y$ in $\mathbf P^N$. Then for any small extension $\widetilde A \twoheadrightarrow A$ and elements $\widetilde Y \in H_Y^{ \mathbf P^N}(\widetilde A), (\bar Y, \bar B) \in \mathrm{Def}_{(Y, B)}(A)$ such that $\widetilde Y \times \mathrm{Spec} \, A = \bar Y$, we have an exact sequence
	\begin{equation*}
	0 \to \mathcal E^{-2} \to \widetilde{\mathcal E}^{-2} \to \bar{\mathcal E}^{-2} \to 0
	\end{equation*}  
	and then an exact sequence
	\begin{equation*}
	H^0(\widetilde{\mathcal E}^{-2} ) \to H^0(\bar{\mathcal E}^{-2} ) \to  H^1(\mathcal E^{-2} )=0.
	\end{equation*}
	Therefore $\bar B$ can be lifted to $\widetilde B$ such that $(\widetilde Y, \widetilde B) \in \mathrm{Def}_{(Y, B)}(\widetilde A)$.
	
	\noindent
	Assuming $Y \subset \mathbf P^N$ is unobstructed, the map $H_Y^{ \mathbf P^N}(\widetilde A) \to H_Y^{ \mathbf P^N}(\bar A)  $ is surjective. Therefore the argument above shows that the map
	\begin{equation*}
	\mathrm{Def}_{(Y,B)}(\widetilde A) \to \mathrm{Def}_{(Y, B)}(\bar A)  
	\end{equation*}
	is surjective, which shows that the functor $\mathrm{Def}_{(Y, B)}$ and the forgetful map $\mathrm{Def}_{(Y, B)} \to H_Y^{ \mathbf P^N}$ are smooth.
	Therefore we have an exact sequence on tangent spaces
	\begin{equation}\label{tangent.D}
	0 \to H^0(\mathcal O_B(B))  \to \mathrm{Def}_{(Y, B)}(k[\epsilon]) \to H^0(\mathcal N_{Y, \mathbf P^N}) \to 0.
	\end{equation}
	
	\smallskip
	\noindent
	One can show that $\mathrm{Def}_{(Y, B)}$ has a 
	semiuniversal formal element by checking Schlessinger's 
	conditions (see e.g. \cite[Theorem 2.3.2]{Sernesi}).

	\smallskip
	\noindent
	There is a map
	\begin{equation}\label{map.F}
	\mathrm{Def}_{(Y,B)} \xrightarrow{F} \mathrm{Def}_{\varphi}
	\end{equation}
	defined as follows. Let $(\bar Y, \bar B)$ be an element in $\mathrm{Def}_{(Y,B)}(A)$, where $\bar B=(\bar r)_0$, with $\bar r \in H^0(\bar{\mathcal E}^{-2})$ lifting $r \in H^0(\mathcal E^{-2})$ such that $B=(r)_0$.  Then, on the total space of 
	$\bar{\mathcal E}^{-1}$, there is a tautological section $\bar t$ lifting the tautological section on the total space of $\mathcal E^{-1}$. Then $(\widetilde{X}, \widetilde{\varphi}) \in \mathrm{Def}_{\varphi}(A)$ is given by
	\begin{equation*}\label{}
	\xymatrix@C-15pt{
		(\bar t^2- \bar r)_{0}\ar@{=}[r] & \widetilde X \ar[d]^{\widetilde \pi}\ar@{^{(}->}[rrrr]& && &\bar{\mathcal E}^{-1}
		\ar[lllld]\\
		& \bar Y&&&&,}
	\end{equation*}
	and
	\begin{equation*}
	\widetilde \varphi : \widetilde X \xrightarrow{\widetilde \pi} \bar Y \hookrightarrow \mathbf P^N_A.
	\end{equation*}
	Then $\widetilde X$ is flat over $A$ and $(\widetilde X, \widetilde \varphi)$ is a lifting of $(X, \varphi)$.

	\smallskip
	\noindent
	Recall that $\mathrm{Def}_{\varphi}(k[\epsilon])=
	H^0(\mathcal N_{\varphi})$ and 
	the long exact sequence of 
	cohomology~\eqref{eq.normal.bundles.cohomology}.
	Since the restriction of $\pi$ becomes an isomorphism between the ramification divisor and the branch locus $B$, this isomorphism identifies $\mathcal O_B(B)=\mathcal N_{\pi}$.
	
	\smallskip
	\noindent
	Let $(\bar{Y}, \bar{B}) \in \mathrm{Def}_{(Y,B)}(k[\epsilon])$ be a first order deformation of $(Y \subset \mathbf P^N, B \in | \mathcal E^{-2} |)$ and $(\widetilde{X}, \widetilde{\varphi}) \in H^0(\mathcal N_{\varphi})$ be the first order deformation of $(X, \varphi)$ associated to $(\bar{Y}, \bar{B})$ by $\mathrm{d}F$. From the construction we made for $F$, we see that $\widetilde{X}\xrightarrow{\widetilde{\varphi}} \mathbf P^N_{k[\epsilon]}$ factors through $\bar{Y} \hookrightarrow \mathbf P^N_{k[\epsilon]} $. Therefore $\mathrm{im}\,\widetilde \varphi =\bar{Y}$. Then, using \cite[Theorem 3.8 (2) and Propositions 3.11, 3.12]{Gonzalez}, we see that there is a commutative diagram
	\begin{equation}\label{smart.diagram}
	\xymatrix@C-10pt{
		& 0 \ar[d] & 0 \ar[d] \\
		0 \ar[r] & H^0(\mathcal O_{B}(B)) \ar[d] \ar^\simeq[r] & H^0(\mathcal N_{\pi}) \ar[d] \\
		& \mathrm{Def}_{(Y,B)}(k[\epsilon]) \ar[d]_{\mathrm{d}\nu} \ar[r]^{\mathrm{d}F} & H^0(\mathcal N_{\varphi}) \ar[d]^{\Psi_1 \oplus \Psi_2} \\
		0 \ar[r] & H^0(\SN_{Y/\mathbf P^N}) \ar[d] \ar[r] & H^0(\mathcal N_{Y/\mathbf P^N})\oplus H^0(\mathcal N_{Y/\mathbf P^N} \otimes \mathcal E) \\
		& 0&  .
	}
	\end{equation}
	We see, from diagram \eqref{smart.diagram}, that if $\Psi_2=0$, then $\mathrm{d}F$ is an isomorphism.
	
	\noindent
	Since $\mathrm{Def}_{(Y, B)}$ and $\mathrm{Def}_{\varphi}$ have a semiuniversal formal element, $\mathrm{Def}_{(Y, B)}$ is smooth and $\mathrm{d}F$ is an isomorphism it follows that $\mathrm{Def}_{\varphi}$ and $F$ are smooth. This shows that $\varphi$ is unobstructed.
	
	\smallskip
	\noindent
	Now we prove that any deformation of $\varphi$ is of degree $2$.
	
	\noindent
	Let $p: \mathcal Y \hookrightarrow \mathbf P_{U}^N \to 
	(U, u_0)$ be 
	an algebraic formally universal embedded deformation of $Y$.
	We can, by assumption, take $U$ and the total family $\mathcal Y$ smooth.

	\smallskip
	\noindent
	Since, from $(1)$, the line bundle $\mathcal E$ can be lifted to any infinitesimal deformation of $Y$, it follows that, after an etale base change centered at $u_0 \in U$, there exists a lifting of $\mathcal E$ to a line bundle $\mathbb L$ on $\mathcal Y$. Since, by the election of $\mathcal Y$, for any infinitesimal lifting $(\bar Y, \bar{\mathcal E})$ of $(Y, \mathcal E)$ there is a base change $\mathrm{Spec} \, A \to U$ such that $\mathcal Y \times \mathrm{Spec} \, A= \bar Y$ then, by the uniqueness of $ \bar{\mathcal E}$,  we see that ${\mathbb L}_{\mid {\bar Y}} = \bar{\mathcal E}$.

	\smallskip
	\noindent
	By the hypothesis $h^1(\mathcal E^{-2})=0$, we can assume $h^1({\mathbb L^{-2}}_{\mid \mathcal Y_u})=0$ for any $u \in U$ and $p_* (\mathbb L^{-2})$ is a free sheaf on $U$ and the formation of $p_{\ast}$ commute with base extension 
	(see \cite[Chapitre III, \S 7]{EGA} or \cite[Chapter 0, \S 5]{MFK}).

	\smallskip
	\noindent
	Let $\mathbf P(p_* (\mathbb L^{-2})) \to U$ be the associated projective bundle.
	On $\mathcal Y \times_U \mathbf P(p_* (\mathbb L^{-2}))$, we consider the divisor $\mathcal B =\{(y, [r]) \, | \, r \in H^0(\mathcal Y_u, {\mathbb L^{-2}}_{\mid \mathcal Y_u}), y \in \mathcal Y_u, r(y)=0 \}$. Since on any fiber the line bundle associated to  $\mathcal B_{\mid \mathcal Y_u \times \{[r]\}}$ is ${\mathbb L^{-2}}_{\mid \mathcal Y_u}$, shrinking $U$ we can assume that $\mathcal O_{\mathcal Y \times_U \mathbf P(p_*( \mathbb L^{-2}))}(\mathcal B)=q^{\ast}\mathbb L^{-2}$, where $q: \mathcal Y \times_U \mathbf P(p_* (\mathbb L^{-2})) \xrightarrow{} \mathcal Y$ is the projection. 
	The divisor $\mathcal B$ is the zero locus of some section $\mathbf r \in H^0(q^{\ast}\mathbb L^{-2})$.
	
	\smallskip
	\noindent
	Let $t \in H^0(q'^{\ast}q^{\ast}\mathbb L^{-2})$ denote the tautological section on the total space of $q^{*} \mathbb L^{-1}$, where $q': q^{\ast}\mathbb L^{-1} \to  \mathcal Y \times_U \mathbf P(p_*(\mathbb L^{-2}))$ is the projection. Then we can construct a relative double covering
	\begin{equation*}\label{universal.double.cover}
	\xymatrix@C-25pt{
		(t^2- (q'^{\ast}\mathbf r))_{0}\ar@{=}[r] & \mathcal X \ar[d] \ar@{^{(}->}[rrrr]& && &
		q^{*} \mathbb L^{-1} \ar[lllld]\\
		& \SY \times_U \PP(p_* (\mathbb L^{-2}))&&&&,
	}
	\end{equation*}
	whose fiber over any point $(u, [r]) \in \mathbf P(p_* (\mathbb L^{-2})) $, with $u \in U$ and $r \in H^0({\mathbb L^{-2}}_{\mid \mathcal Y_u})$, is the double covering $\mathcal X_{(u,[r])} \to \mathcal Y_u$ defined by the divisor $\mathcal B_{\mid \mathcal Y_u \times \{[r]\}}=(r)_0 \in | {\mathbb L^{-2}}_{\mid \mathcal Y_u}|$. In fact, we restrict the construction to the open set $V \subset \mathbf P(p_*( \mathbb L^{-2}))$, where the divisors $\mathcal B_{\mid \mathcal Y_u \times \{[r]\}}$ are reduced and smooth,
	in order to obtain integral, smooth, double coverings 
	$\mathcal X_{(u,[r])} \to \mathcal Y_u$. The open set $V$ 
	contains the point $(u_0, [r_0])$ that corresponds to the 
	pair $(Y,B)$, so we can assume
	$V$ maps surjectively onto $U$.
	
	\smallskip
	\noindent
	Let $\Phi$ denote the composite map
	\begin{equation*}
	\mathcal X \to \mathcal Y \times_{U} V \hookrightarrow \mathbf P_{V}^N.
	\end{equation*}
	Then $\Phi$ is an algebraic deformation of $X \xrightarrow{\varphi} \mathbf P^N$.

	\smallskip
	\noindent
	We will compute the tangent space to $V$ at $(u_0,[r_0])$ by considering the fiber at $(u_0,[r_0])$ of the sequence
	\begin{equation*}
	0 \to \ST_{V/U} \to \ST_V \to \tau^{*} \ST_U \to 0,
	\end{equation*}
	associated to the projection $V \xrightarrow{\tau} U$.
	This way, since $H^1(\SO_Y)=0$, we obtain the sequence
	\begin{equation}\label{tangent.V}
	0 \to H^0(\SO_B(B)) \to \ST_V(u_0,[r_0]) 
	\xrightarrow{\mathrm{d}\tau} H^0(\SN_{Y/\PP^m}) \to 0.
	\end{equation}
	
	\noindent
	Comparing \eqref{tangent.V} to \eqref{tangent.D} we see that the Kodaira--Spencer map for the family $(\mathcal Y \times_U V \to V, \mathcal B, (u_0, [r_0]) \in V)$ is bijective, so this family is an algebraic formally semiuniversal deformation of $(Y \subset \mathbf P^N, B \in | \mathcal L^{-2} |)$. 
	
	\smallskip
	\noindent
	Since the differential $\mathrm{d}F$ is an isomorphism then the Kodaira--Spencer map for the family $(\mathcal X \xrightarrow{\Phi} \mathbb P_V^N \to V, (u_0, [r_0])\in V)$ is bijective, as well. Therefore $(\mathcal X \xrightarrow{\Phi} \mathbb P_V^N \to V, (u_0, [r_0])\in V)$ is an algebraic formally semiuniversal deformation for $\varphi$, which shows that any deformation 
	of $\varphi$ is a (finite)  morphism of degree $2$.
\end{proof}

\begin{proposition}
\label{prop.algebraic.deformation}
Let $\mathfrak X$ be a smooth, algebraic projective variety, 
let 
\begin{equation*}
 \phi:  \mathfrak X \longrightarrow \PP^N
\end{equation*}
be a morphism and let $L$ be a polarization on $\mathfrak X$.
If 
 $H^2(\mathcal O_\mathfrak X)=0$,
then $\varphi$ and $(\mathfrak X, L)$ have an algebraic 
formally semiuniversal deformation. 
\end{proposition}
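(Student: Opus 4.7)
The plan is to obtain both algebraic formally semiuniversal deformations by combining Schlessinger's formal existence theorem with the algebraic models given by suitable Hilbert schemes, using the hypothesis $H^2(\mathcal O_\mathfrak X)=0$ as the key tool that allows the polarization to be lifted along infinitesimal deformations of $\mathfrak X$.

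First I would verify Schlessinger's conditions for $\mathrm{Def}_{(\mathfrak X, L)}$ and $\mathrm{Def}_{\varphi}$ (as in \cite[Theorem 2.3.2]{Sernesi}) to produce formal semiuniversal deformations in both cases; this is standard since $\mathfrak X$ is smooth and projective, and the relevant tangent and obstruction spaces are finite--dimensional. Next, to algebraize, I would fix $n \gg 0$ so that $L^{\otimes n}$ is very ample and embed $\mathfrak X \hookrightarrow \PP^M$ via $|L^{\otimes n}|$. The Hilbert scheme of subschemes of $\PP^M$ with the appropriate Hilbert polynomial provides, in a neighborhood of $[\mathfrak X]$, an algebraic family of embedded deformations of $(\mathfrak X, L^{\otimes n})$. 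For $\varphi$, I would use instead the Hilbert scheme of graphs $\Gamma \subset \PP^M \times \PP^N$, which parametrizes embedded deformations of $\mathfrak X$ together with morphisms to $\PP^N$ extending $\varphi$.

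To descend from the polarization $L^{\otimes n}$ to $L$, I would invoke that $H^2(\mathcal O_\mathfrak X)$ is precisely the obstruction space for lifting a line bundle along an infinitesimal deformation of $\mathfrak X$ (see \cite[Theorem 3.3.11]{Sernesi}); vanishing of this group implies that the relative Picard scheme is smooth at $(\mathfrak X, L)$ over the Hilbert scheme, so after a suitable étale base change $L$ extends to a line bundle on the total family. Taking a slice transverse to the $\mathrm{PGL}(M+1)$--orbit then produces an algebraic deformation of $(\mathfrak X, L)$, and analogously of $\varphi$. The main obstacle, and the only nontrivial point, is to verify that the Kodaira--Spencer map of the resulting sliced family is bijective, so that the formal completion at the distinguished point is genuinely formally semiuniversal and not merely versal; this is a matter of matching the tangent space of the slice with $\mathrm{Def}_{(\mathfrak X, L)}(k[\epsilon])$, respectively $\mathrm{Def}_{\varphi}(k[\epsilon])$, by quotienting $H^0(\mathcal N_{\mathfrak X/\PP^M})$ (or $H^0(\mathcal N_{\Gamma/\PP^M \times \PP^N})$) by the contribution of the projective automorphisms, and then applying the $H^2(\mathcal O_\mathfrak X)=0$ vanishing to match the obstruction theories.
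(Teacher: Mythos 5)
Your route is genuinely different from the paper's, which is much shorter and softer: there one takes the formal semiuniversal elements of $\mathrm{Def}_{\varphi}$ and $\mathrm{Def}_{(\mathfrak X,L)}$ (\cite[Theorems 3.4.8 and 3.3.11 (i)]{Sernesi}), observes that $H^2(\mathcal O_{\mathfrak X})=0$ allows the polarization to be lifted to every infinitesimal neighbourhood so that the formal deformation is effective by Grothendieck's existence theorem \cite[Theorem 2.5.13]{Sernesi}, and then concludes directly by Artin's algebraization theorem \cite{Artin.alg}; no Hilbert scheme, group action or slicing is needed.

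As written, however, your argument has a genuine gap exactly where you declare ``the only nontrivial point'' to be. Bijectivity of the Kodaira--Spencer map of the sliced family does not by itself yield formal semiuniversality: semiuniversal means versal \emph{plus} bijective on tangent vectors, and it is the versality of the sliced family that carries all the weight; this is not a tangent-space computation. You would need (i) that the germ of the Hilbert scheme of $\PP^M$ (resp.\ of graphs in $\PP^M\times\PP^N$) maps \emph{formally smoothly} onto $\mathrm{Def}_{(\mathfrak X,L^{\otimes n})}$ (resp.\ $\mathrm{Def}_{\varphi}$): for every small extension $A'\twoheadrightarrow A$ one must extend the line bundle (here $H^2(\mathcal O_{\mathfrak X})=0$ enters), extend its sections (here $H^1(L^{\otimes n})=0$, i.e.\ $n\gg 0$, and base change over Artin rings enter) and preserve very ampleness, so as to lift the embedded datum over $A'$ and not merely match first-order data; and (ii) that versality survives restriction to the transverse slice. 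Point (ii) is not automatic, since the Hilbert scheme may be singular at $[\mathfrak X]$ and the stabilizer of $[\mathfrak X]$ in $\mathrm{PGL}(M+1)$ may be positive dimensional: one must translate an arbitrary infinitesimal lifting back into the slice by an element of $\mathrm{PGL}(M+1)(A')$ congruent to the identity modulo the extension ideal (using transversality to solve the resulting linear conditions), or invoke the structure result that a versal element is a semiuniversal one times a formally smooth factor. Two smaller points: for $\mathrm{Def}_{\varphi}$ the target $\PP^N$ is fixed, so only $\mathrm{PGL}(M+1)$ acting on the first factor may be divided out, not automorphisms of $\PP^N$; and since you embed by $L^{\otimes n}$ you must in the end pass from $\mathrm{Def}_{(\mathfrak X,L^{\otimes n})}$ back to $\mathrm{Def}_{(\mathfrak X,L)}$ --- your extension of $L$ via the relative Picard scheme does this, but the identification of tangent spaces (the Atiyah extension of $L$ versus that of $L^{\otimes n}$) uses characteristic $0$ together with $H^2(\mathcal O_{\mathfrak X})=0$ and should be stated.
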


\begin{proof}
Since $\mathfrak X$ is a projective 
variety, it has a formal 
versal deformation (see e.g. \cite{DolIsk}
or \cite{Zariski}).
Since $H^2(\mathcal O_\mathfrak X)=0$, 
then, by Grothendieck's existence theorem 
(see \cite[Theorem 2.5.13]{Sernesi}), 
this formal versal 
deformation of $\mathfrak X$ is effective. 
It follows 
from general deformation theory 
the existence of an algebraic formally versal 
(even semiuniversal) deformation
of 
 $\varphi$ 
 (see  \cite[Theorem 3.4.8]{Sernesi}).
In this case, 
the formal semiuniversal deformation of 
$\phi$ is also effective, so it 
is algebraizable by 
Artin's algebraization theorem (see 
\cite{Artin.alg}).

\smallskip
\noindent 
By \cite[Theorem 3.3.11.(i)]{Sernesi}, 
the functor Def$_{(X,L)}$ 
has a semiuniversal formal element. Then arguing as above, 
this element is algebraizable, so
there exists an algebraic formally semiuniversal 
deformation for $(X,L)$. 
\end{proof}

\noindent We give now a generalization of 
\cite[Lemma 2.4]{deformation}. 
We recall before the 
definition of Fano variety and variety of general type:

\begin{definition}
 {\rm 
 Let $\mathfrak X$  be a smooth variety of dimension 
 $m$, $m \geq 2$. We say $\mathfrak X$ is a 
 Fano variety if $-K_{\mathfrak X}$ is ample. We say 
 that $\mathfrak X$ is of general type if $K_X$ is big.}
\end{definition}

\begin{lemma}\label{lemma.deformation.multiples.canonical}
 Let $\mathfrak X$  be a smooth variety of dimension $m$, $m \geq 2$, 
 which is 
either a 
 Fano variety
 or a 
 regular variety of general type. Let $l \in \NN$.
 \begin{enumerate}
  \item  If $\mathfrak X$ is Fano, let $L=\omega_\mathfrak X^{-l}$; 
  \item If $\mathfrak X$ is a variety of general type, let 
  $L=\omega_\mathfrak X^{\otimes l}$. 
 \end{enumerate}
 Assume $L$ is base--point--free. 
 Let $\phi$ be the morphism induced by $|L|$.
Then, for any deformation $\Phi: \mathcal X 
 \longrightarrow \PP^N_Z$ of $\varphi$, 
the morphism $\Phi_z$ is induced by $|\omega_{\mathcal X_z}^{-l}|$ 
if $\mathfrak X$ is Fano and by 
$|\omega_{\mathcal X_z}^{\otimes l}|$ 
if $\mathfrak X$ is of general type. 
\end{lemma}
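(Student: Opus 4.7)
The plan is to identify, fiberwise, the line bundle pulled back from $\PP^N$ by $\Phi$ with the appropriate power of the relative canonical bundle of $\mathcal X/Z$, and then to check that the linear subsystem cut out by $\Phi_z$ is complete on each fiber. Writing $\epsilon=-1$ in the Fano case and $\epsilon=+1$ in the general type case, we have $L=\omega_{\mathfrak X}^{\epsilon l}$ in both settings. Setting $\mathcal L := \Phi^*\mathcal O_{\PP^N_Z}(1)$, we obtain a line bundle on $\mathcal X$, flat over $Z$, with $\mathcal L_0=L$, together with $N+1$ distinguished sections (pullbacks of the tautological sections of $\mathcal O_{\PP^N_Z}(1)$) that generate $\Phi$.

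The core step is to show $\mathcal L_z\cong \omega_{\mathcal X_z}^{\epsilon l}$ for $z$ in a neighborhood of $0$. Since $\mathcal X\to Z$ is smooth and proper, the relative dualizing sheaf $\omega_{\mathcal X/Z}$ exists and restricts on every fiber to $\omega_{\mathcal X_z}$; hence $\mathcal N := \mathcal L\otimes \omega_{\mathcal X/Z}^{-\epsilon l}$ is a line bundle on $\mathcal X$ with $\mathcal N_0\cong \mathcal O_{\mathfrak X}$. We have $H^1(\mathcal O_{\mathfrak X})=0$ in both cases (by Kodaira vanishing if $\mathfrak X$ is Fano, by the regularity hypothesis if $\mathfrak X$ is of general type), and by upper semicontinuity this vanishing persists on an open neighborhood $U$ of $0\in Z$. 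Consequently the relative Picard scheme $\mathrm{Pic}_{\mathcal X/Z}$ is \'etale over $U$ along the identity section, so the section of $\mathrm{Pic}_{\mathcal X/Z}$ determined by $\mathcal N$, which meets the identity section at $0$, must coincide with it on a neighborhood of $0$. This yields $\mathcal N_z\cong \mathcal O_{\mathcal X_z}$, and therefore $\mathcal L_z\cong \omega_{\mathcal X_z}^{\epsilon l}$, for every such $z$.

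It remains to see that $\Phi_z$ is induced by the complete linear system $|\omega_{\mathcal X_z}^{\epsilon l}|$ and not merely by a subsystem. Nondegeneracy of the image of $\Phi_z$ in $\PP^N$ (equivalently, linear independence of the $N+1$ distinguished sections of $\mathcal L_z$) is an open condition in $z$ that holds at $0$, so on a neighborhood of $0$ we have $h^0(\mathcal L_z)\geq N+1$. Conversely, upper semicontinuity of $h^0$ gives $h^0(\mathcal L_z)\leq h^0(L)=N+1$ near $0$. Hence $h^0(\mathcal L_z)=N+1$, the distinguished sections form a basis of $H^0(\mathcal L_z)$, and $\Phi_z$ is induced by $|\mathcal L_z|=|\omega_{\mathcal X_z}^{\epsilon l}|$. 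The main obstacle is the identification of line bundles in the core step; the crucial input making it work is the regularity $H^1(\mathcal O)=0$, which rigidifies line-bundle deformations and forces $\mathcal L$ and $\omega_{\mathcal X/Z}^{\epsilon l}$ to agree fiberwise.
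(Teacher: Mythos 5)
Your proof is correct, and its overall skeleton (first identify $\Phi_z^*\mathcal O(1)$ with $\omega_{\mathcal X_z}^{\pm l}$, then check completeness of the series) matches the paper's; but you fill both steps in differently. For the identification, the paper simply cites the argument of \cite[Lemma 2.4]{deformation}, whereas you spell out a rigidity argument: $\mathcal L\otimes\omega_{\mathcal X/Z}^{-\epsilon l}$ is trivial on the central fiber, and $H^1(\mathcal O)=0$ (Kodaira vanishing in the Fano case, the regularity hypothesis in the general type case) forces the corresponding section of $\mathrm{Pic}_{\mathcal X/Z}$ to agree with the identity section near $0$ --- the same idea as the cited lemma, made self-contained. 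Two small touch-ups: one should shrink $Z$ first so that all fibers are smooth (openness of the smooth locus plus properness; harmless given the paper's convention that conclusions hold on a neighborhood of $0$), and what $H^1(\mathcal O_{\mathcal X_z})=0$ directly gives is that $\mathrm{Pic}_{\mathcal X/Z}$ is \emph{unramified} over $U$, which is all the agreement-of-sections argument needs (``\'etale'' would additionally require flatness, which you never use). For the second step the routes genuinely diverge: the paper obtains the constancy of $h^0(\mathcal L_z)$ cohomologically --- Kodaira vanishing of $H^i(L)$ in the Fano case, and deformation invariance of plurigenera in the general type case --- while you get it from the openness of nondegeneracy of the image (the degenerate locus is closed because the family is flat and proper) combined with upper semicontinuity of $h^0$. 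Your version is more elementary and uniform, avoiding in particular the appeal to invariance of plurigenera; the paper's version buys slightly more (e.g.\ in the Fano case it gives vanishing of all higher cohomology of $\mathcal L_z$, hence local freeness and base change for the pushforward, which is the form in which this constancy is reused elsewhere in the paper).
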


\begin{proof}
The fact that $\Phi^*\mathcal O_{\mathbf P_z^N}(1)=
\omega_{\mathcal X_z}^{-l}$ if $\mathfrak X$ is Fano and 
$\Phi^*\mathcal O_{\mathbf P_z^N}(1)=
\omega_{\mathcal X_z}^{\otimes l}$ 
if $\mathfrak X$ 
is of general type follows from the same argument used in the proof 
of \cite[Lemma 2.4]{deformation}.

\smallskip
\noindent
If $\mathfrak X$ is Fano, then $H^i(L)=
H^i(\omega_\mathfrak X \otimes \omega_\mathfrak X^{-1} 
\otimes L)=0$ 
by Kodaira vanishing, 
for $\omega_\mathfrak X^{-1}$ and $L$ are
ample.
Then semicontinuity implies $h^0(L_z)$ is constant. 
If $\mathfrak X$ is of general type we use that the plurigenera are 
deformation invariants.
Shrinking $Z$ if necessary we may assume 
$L_z$ is base--point--free for all $z \in Z$. 
Then $\Phi_z$ is induced by the complete linear series of $|L_z|$.
\end{proof}

\section{Deformations of hyperelliptic Fano-K3 
varieties and generalized hyperelliptic polarized Fano varieties}\label{Section.Fano-K3}

\begin{notation}\label{notation.Y}
{\rm 
Unless otherwise stated, in the remaining of this article the variety
$Y$ has dimension $m$,  $m \geq 2$,
and is isomorphic to a variety of  one of these three types: 
\begin{enumerate}
 \item[(i)] $\mathbf P^m$.
 \item[(ii)] A smooth hyperquadric of dimension $m \geq 3$; 
 in this case, let $\mathfrak h$ be the restriction 
 of the hyperplane section of $\PP^{m+1}$ to $Y$. 
 \item[(iii)] A (smooth) projective bundle on $\PP^1$.
\end{enumerate}}
\end{notation}

\noindent
In order to study the deformations of hyperelliptic Fano-K3 varieties
we need first to carry 
out certain cohomology computations. We do them in the broader setting
of 
generalized hyperelliptic polarized Fano varieties.

\begin{proposition}\label{theorem.Fano}
Let $X$, $Y$ and $\mathcal E$ be as in notations \ref{setup} 
and \ref{notation.Y}, let $\mathcal T_Y$ be the tangent bundle of $Y$
and assume 
$X$ is Fano.
If $Y$ is a hyperquadric, let 
$\mathcal E \neq \mathcal O_Y(-\mathfrak h)$ and 
$\mathcal E \neq \mathcal O_Y(-2\mathfrak h)$. 
 Then $h^1(\mathcal T_Y \otimes \mathcal E)=0$. 
\end{proposition}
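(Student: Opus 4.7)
The plan is to use the Fano condition to sharply constrain $\mathcal{E}$ and then compute $h^1(\mathcal{T}_Y \otimes \mathcal{E})$ directly from a tangent bundle sequence tailored to each of the three geometries of $Y$. The key first step is to translate the Fano hypothesis on $X$ into an ampleness statement on $Y$: because $\pi$ is a double cover with $\pi_*\mathcal{O}_X = \mathcal{O}_Y \oplus \mathcal{E}$ and $\mathcal{E}^{-2} = \mathcal{O}_Y(B)$, one has $\omega_X \cong \pi^*(\omega_Y \otimes \mathcal{E}^{-1})$, and since $\pi$ is finite, $\omega_X^{-1}$ ample is equivalent to $\omega_Y^{-1} \otimes \mathcal{E}$ ample on $Y$. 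Combined with $\mathcal{E}^{-2}$ admitting a section and the simple-connectedness of the three models of $Y$, this pins down $\mathcal{E}$ in each case up to a single numerical parameter.

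For $Y = \mathbf{P}^m$ one gets $\mathcal{E} = \mathcal{O}(-e)$ with $1 \leq e \leq m$; twisting the Euler sequence $0 \to \mathcal{O} \to \mathcal{O}(1)^{m+1} \to \mathcal{T}_Y \to 0$ by $\mathcal{O}(-e)$ and invoking the vanishing of intermediate cohomology of line bundles on $\mathbf{P}^m$ yields $h^1(\mathcal{T}_Y(-e)) = 0$ at once. For the smooth hyperquadric of dimension $m \geq 3$, $\mathrm{Pic}(Y) = \mathbf{Z}\mathfrak{h}$ forces $\mathcal{E} = \mathcal{O}_Y(-e\mathfrak{h})$, and the ampleness of $\omega_Y^{-1} \otimes \mathcal{E} = \mathcal{O}_Y((m-e)\mathfrak{h})$ together with the exclusions $\mathcal{E} \neq \mathcal{O}(-\mathfrak{h}), \mathcal{O}(-2\mathfrak{h})$ forces $3 \leq e \leq m-1$ (so the statement is vacuous for $m=3$). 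I would then combine the twisted conormal sequence $0 \to \mathcal{T}_Y \to \mathcal{T}_{\mathbf{P}^{m+1}}|_Y \to \mathcal{O}_Y(2\mathfrak{h}) \to 0$ with the restricted Euler sequence of $\mathbf{P}^{m+1}$; since quadrics of dimension $\geq 3$ have no intermediate cohomology in line bundles and $H^0(\mathcal{O}_Y((2-e)\mathfrak{h})) = 0$ for $e \geq 3$, chasing the resulting long exact sequences delivers the vanishing.

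For $Y = \mathbf{P}(\mathcal{F}) \xrightarrow{p} \mathbf{P}^1$ with $\mathcal{F}$ a rank-$m$ bundle, I would split $\mathcal{T}_Y$ via the relative tangent sequence $0 \to \mathcal{T}_{Y/\mathbf{P}^1} \to \mathcal{T}_Y \to p^*\mathcal{T}_{\mathbf{P}^1} \to 0$ and describe $\mathcal{T}_{Y/\mathbf{P}^1}$ through the relative Euler sequence; the cohomology of the twists by $\mathcal{E}$ can then be computed by the Leray spectral sequence for $p$, using that $\mathrm{Pic}(Y) = \mathbf{Z} H \oplus p^*\mathrm{Pic}(\mathbf{P}^1)$ and that every $R^j p_*$ of a line bundle on $Y$ is a split vector bundle on $\mathbf{P}^1$ whose degrees are controlled by $\mathcal{E}$ and the splitting type of $\mathcal{F}$.

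The main obstacle is this last case. Two parameters interact — the splitting type of $\mathcal{F}$ and the class of $\mathcal{E}$ in $\mathrm{Pic}(Y)$ — and the ampleness of $\omega_Y^{-1} \otimes \mathcal{E}$ must be parsed into inequalities that simultaneously force every relevant term $H^i(\mathbf{P}^1, R^j p_*(\mathcal{T}_Y \otimes \mathcal{E}))$ to vanish, especially when $\mathcal{F}$ is highly non-balanced so that the relative degree of $\mathcal{E}$ varies unpredictably among fiberwise pieces. The Fano and branch-divisor constraints should together rule out the degenerate configurations, but the bookkeeping is the delicate part of the argument.
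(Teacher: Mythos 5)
Your plan follows the paper's proof in all three cases: the twisted Euler sequence on $\mathbf{P}^m$, the twisted conormal sequence of the quadric in $\mathbf{P}^{m+1}$ combined with the restricted Euler sequence, and the relative tangent plus relative Euler sequences with Leray for the projective bundle over $\mathbf{P}^1$. Your first two cases are essentially complete. (Two small remarks: for $Y=\mathbf{P}^2$ the vanishing is not purely ``intermediate cohomology'' --- you also need $H^2(\mathcal{O}_{\mathbf{P}^2}(-e))=0$, which your Fano bound $e\le m$ does supply, exactly as in the paper; and on the quadric your extra observation that Fano-ness forces $e\le m-1$, hence vacuity for $m=3$, is correct but unnecessary --- the paper proves the vanishing for every $e\ge 3$ without it, which is what makes the proposition reusable later for ribbons.)

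The genuine gap is precisely what you label ``the delicate bookkeeping'': for $Y$ a projective bundle over $\mathbf{P}^1$ you never carry out the computation, and that computation is the substance of the proof, because the required vanishings are borderline and only hold thanks to sharp numerical consequences of the Fano hypothesis. Writing $B\sim 2\alpha H_0+2\beta F$, i.e.\ $\mathcal{E}=\mathcal{O}_Y(-\alpha H_0-\beta F)$, with $E_0=\mathcal{O}_{\mathbf{P}^1}\oplus\mathcal{O}_{\mathbf{P}^1}(-e_1)\oplus\cdots\oplus\mathcal{O}_{\mathbf{P}^1}(-e_{m-1})$ normalized, ampleness of $\omega_Y^{-1}\otimes\mathcal{E}$ gives $\alpha\le m-1$, and for $\alpha\ge 2$ every relevant pushforward vanishes fiberwise, so that range is indeed automatic. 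The cases $\alpha=1$ and $\alpha=0$ are where the statement could fail: for $\alpha=1$ the twist of the relative Euler term contributes groups $H^1(\mathcal{O}_{\mathbf{P}^1}(e_j-\beta))$, including a summand of degree $-\beta$, so one must prove $\beta\le 1$, and this follows only from parsing ampleness as $e_1+\cdots+e_{m-1}+2-\beta>(m-1)e_{m-1}$ together with $e_j\le e_{m-1}$; for $\alpha=0$ one must combine $\beta\ge 1$ (connectedness of $X$) with $\beta<2-e_{m-1}$ to force $e_{m-1}=0$ and $\beta=1$ before the cohomology can be computed, and then line bundles of degree exactly $-1$ on $\mathbf{P}^1$ appear on the nose. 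So the constraints do not ``rule out the degenerate configurations'' in any soft way; the case division on $\alpha$ and the explicit derivation of these tight bounds is the core of the argument, and asserting that it should work is not a proof. As it stands, your proposal is an accurate outline with the decisive estimates in the scroll case missing.
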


\begin{proof}
\emph{Case 1: $Y=\PP^m$.}
If $m >2$, 
then the vanishing of $H^1(\mathcal T_Y \otimes \mathcal E)$ follows from suitably twisting and
taking cohomology on the Euler sequence of tangent bundle of 
$\PP^m$ and by the vanishing of the intermediate
cohomology of line bundles on $\mathbf P^m$. 
If $m=2$, we also need to 
check that $H^2(\mathcal E)=0$. Recall 
$\omega_X=\pi^*(\omega_Y \otimes \mathcal E^{-1})$. 
Since $-K_X$ is ample, so is 
 $\omega_Y \otimes \mathcal E^{-1}$. 
 Then the degree of $\mathcal E$ is greater than or equal to $-2$, so
 $H^2(\mathcal E)$ indeed vanishes.

 \smallskip
 \noindent \emph{Case 2: $Y$ is a hyperquadric.}
 By Lefschetz hyperplane theorem the 
 Picard group of $Y$ is generated by $\mathcal O_Y(\mathfrak h)$.
Then $\mathcal E=\mathcal O_Y(\delta\mathfrak h)$, 
with $\delta$ a negative integer, for $X$ is connected. 
Then, by hypothesis $\delta \leq -3$. 
 Consider the exact sequence
 \begin{equation}\label{eq.normal.sequence.hyperquadric}
  0 \longrightarrow 
  \mathcal T_Y \otimes \mathcal E 
  \longrightarrow 
  \mathcal T_{\PP^{m+1}}|_Y \otimes \mathcal E \longrightarrow
  \mathcal O_Y(2\mathfrak h) \otimes \mathcal E 
  \longrightarrow 0, 
 \end{equation}
 obtained by tensoring the normal sequence of $Y$ in 
 $\PP^{m+1}$ by $\mathcal E$. 
 We want to see 
\begin{equation}\label{eq.vanishing.H1.restricted.tangent.hyperquadric}
 H^1(\mathcal T_{\PP^{m+1}}|_Y \otimes \mathcal E)=0.
\end{equation}
Consider the exact sequence
\begin{equation}\label{eq.restriction.Euler.hyperquadric}
 0 \longrightarrow \mathcal E \longrightarrow
 \mathcal O_Y(\mathfrak{h})^{m+2} \otimes \mathcal E 
 \longrightarrow \mathcal T_{\PP^{m+1}}|_Y \otimes \mathcal E
 \longrightarrow 0, 
\end{equation}
which is obtained by restricting to $Y$ the 
Euler sequence of the tangent bundle of $\mathbf P^{m+1}$
and tensoring with $\mathcal E$. Taking cohomology
on \eqref{eq.restriction.Euler.hyperquadric} we get
\begin{equation*}
 H^1(\mathcal O_Y((\delta +1)\mathfrak{h}))^{\oplus m+2}
 \longrightarrow 
 H^1(\mathcal T_{\PP^{m+1}}|_Y \otimes \mathcal E)
 \longrightarrow H^2(\mathcal O_Y(\delta \mathfrak{h})). 
\end{equation*}
Then we need $H^1(\mathcal O_Y((\delta +1)\mathfrak{h}))=0$
and 
\begin{equation}\label{eq.H^2.E.hyperquadric}
 H^2(\mathcal O_Y(\delta \mathfrak{h}))=0.
\end{equation}
Both vanishings follow
from suitably twisting and taking cohomology on the exact sequence
\begin{equation}
\label{eq.presentation.structure.sheaf.hyperquadric}
 0 \longrightarrow \mathcal O_{\PP^{m+1}}(-2)  
 \longrightarrow \mathcal O_{\PP^{m+1}} 
 \longrightarrow \mathcal O_Y
 \longrightarrow 0, 
\end{equation}
because of the vanishing of the intermediate cohomology
of line bundles on $\PP^{m+1}$ (recall $m+1 \geq 4$). 

\smallskip
\noindent
Now we study 
$H^0(\mathcal E \otimes \mathcal O_Y(2\mathfrak h))=
H^0(\mathcal O_Y((\delta+2)\mathfrak h))$. 
If $\delta \leq -3$, 
then 
$H^0(\mathcal O_Y((\delta+2)\mathfrak h))=0$. This together
with \eqref{eq.normal.sequence.hyperquadric}
and \eqref{eq.vanishing.H1.restricted.tangent.hyperquadric}
yields
$H^1(\mathcal T_Y \otimes \mathcal E)=0$ if $Y$ is a 
hyperquadric.

\smallskip
 \noindent \emph{Case 3: $Y$ is a projective bundle on $\PP^1$.}
 We use the following notation:
 \begin{equation}\label{notation.Y.Fano}
  \end{equation}
 \begin{enumerate}
  \item[(i)]  Let $Y=\PP(E_0)$ with $E_0$ normalized, and let 
\begin{equation*}
 E_0=\mathcal O_{\PP^1} \oplus \mathcal O_{\PP^1}(-e_1) \oplus \cdots  
 \mathcal O_{\PP^1}(-e_{m-1}), 
\end{equation*}
with $0 \leq e_1 \leq \cdots \leq e_{m-1}$. 
  \item[(ii)] Let $H_0$ be such that $\mathcal O_Y(H_0)=\mathcal O_{\PP(E_0)}(1)$. 
  \item[(iii)] 
 Let $p$ be the structure morphism from $Y$ to $\PP^1$ and let $F$ be a fiber 
 of $p$.
 \item[(iv)] Let $B \sim 2\alpha H_0 + 2\beta F$
 \end{enumerate}
Recall $K_Y=-mH_0-(e_1+\cdots +e_{m-1}+2) F$. Recall $\omega_X=
 \pi^*(\omega_Y \otimes \mathcal E^{-1})$. Since $\omega_X^{-1}$ is ample, 
 so is 
 $\omega_Y \otimes \mathcal E^{-1}$, therefore $\alpha < m$.

 \smallskip
 \noindent   Let $\mathcal T_{Y/\PP^1}$ be the relative tangent 
 bundle to $p$ and consider the exact sequence
 \begin{equation}\label{eq1}
  0 \longrightarrow \mathcal T_{Y/\PP^1} \longrightarrow \mathcal T_Y 
  \longrightarrow p^*\mathcal T_{\PP^1} \longrightarrow 0
  \end{equation}
and the relative Euler sequence
\begin{equation}\label{eq2}
 0 \longrightarrow \mathcal O_Y \longrightarrow p^*E_0^\vee \otimes \mathcal O_Y(H_0) \longrightarrow
 \mathcal T_{Y/\PP^1} \longrightarrow 0. 
\end{equation}

 \smallskip
 \noindent 
First we argue for $\alpha >1$.
We see that $H^1(p^*\mathcal T_{\PP^1}\otimes \mathcal E)=
H^1(\mathcal O_Y(-\alpha H_0 + (2 -\beta) F))=0$. Indeed, 
\begin{equation*}
 H^1(\mathcal O_{\PP^{m-1}}(-\alpha)) = \cdots = H^{m-2}(\mathcal O_{\PP^{m-1}}(-\alpha))=
  H^{m-1}(\mathcal O_{\PP^{m-1}}(-\alpha))=0,
\end{equation*}
because of the vanishing of the intermediate cohomology of $\mathbf P^{m-1}$ and, 
for the topmost cohomology, because $\alpha \leq m-1$. 
Then 
$R^ip_*(\mathcal O_Y(-\alpha H_0 + (2 -\beta) F))=0$ for all $i > 0$, so, by the Leray's spectral sequence, 
$H^1(\mathcal O_Y(-\alpha H_0 + (2 -\beta) F))=
H^1(p_*\mathcal O_Y(-\alpha H_0 + (2 -\beta) F))$. 
On the other hand, 
$H^0(\mathcal O_{\PP^{m-1}}(-\alpha))=0$ for $\alpha >0$, 
so $p_*\mathcal O_Y(-\alpha H_0 + (2 -\beta) F)
=0$, so $H^1(p_*\mathcal O_Y(-\alpha H_0 + (2 -\beta) F))=0$.

\smallskip
\noindent We see that $H^1(p^*E_0^{\vee} \otimes \mathcal O_Y(H_0) \otimes \mathcal E)=0$. 
Indeed, 
$p^*E_0^\vee \otimes \mathcal O_Y(H_0) \otimes \mathcal E$ is a direct sum of $m$ line bundles on $Y$, each of them 
of the form $\mathcal O_Y((1-\alpha)H_0+\delta_j F)$, 
for certain $\delta_j \in \ZZ$, $1 \leq j \leq 
m$. Since $\alpha > 1$, we can argue as above. 
We see that $H^2(\mathcal E)=H^2(\mathcal O_Y(-\alpha H_0 -\beta F))=0$ if $\alpha >0$ using the same arguments. 
Then exact sequences \eqref{eq1} and \eqref{eq2} and the vanishing of 
$H^1(p^*\mathcal T_{\PP^1}\otimes \mathcal E)$, $H^1(p^*E_0^\vee \otimes \mathcal O_Y(H_0) \otimes \mathcal E)$ 
and $H^2(\mathcal E)$ imply $H^1(\mathcal T_Y \otimes \mathcal E)=0$.

\smallskip
\noindent Now we argue for $\alpha=1$. Note that we have already proved
$H^1(p^*\mathcal T_{\PP^1}\otimes \mathcal E)=
H^2(\mathcal E)=0$ if $\alpha >0$. Thus we 
only need to compute $H^1(p^*E_0^\vee \otimes \mathcal O_Y(H_0) \otimes \mathcal E)$. 
Recall 
\begin{equation*}
 \omega_Y^{-1} \otimes  \mathcal E=
\mathcal O_Y((m-1)H_0+(e_1+\cdots + e_{m-1} + 2 - \beta)F)
\end{equation*}
is ample. This is equivalent to $e_1 + \cdots + e_{m-1} + 2 - \beta > (m-1)e_{m-1}$. 
Then $(m-1)e_{m-1} + 2 - \beta > (m-1)e_{m-1}$, so $\beta <2$. On the other hand
\begin{equation*}
 \begin{matrix}
 H^1(p^*E_0^\vee \otimes \mathcal O_Y(H_0) \otimes \mathcal E)=H^1(\mathcal O_Y(-\beta F)) \oplus 
 H^1(\mathcal O_Y((e_1-\beta) F)) \oplus \cdots \oplus 
 H^1(\mathcal O_Y((e_{m-1}-\beta) F))= \\
 H^1(\mathcal O_{\PP^1}(-\beta)) \oplus 
 H^1(\mathcal O_{\PP^1}(e_1-\beta)) \oplus \cdots \oplus 
 H^1(\mathcal O_{\PP^1}(e_{m-1}-\beta))=0 
 \end{matrix}
\end{equation*}
if $\beta <2$. Then exact sequences \eqref{eq1} and \eqref{eq2} and the vanishing of 
$H^1(p^*\mathcal T_{\PP^1}\otimes \mathcal E)$, $H^1(p^*E_0^\vee \otimes \mathcal O_Y(H_0) \otimes \mathcal E)$ 
and $H^2(\mathcal E)$ imply $H^1(\mathcal T_Y \otimes \mathcal E)=0$.

\smallskip
\noindent Finally we argue for $\alpha=0$. 
Now 
\begin{equation*}
 \omega_Y^{-1} \otimes  \mathcal E=
\mathcal O_Y(mH_0+(e_1+\cdots + e_{m-1} + 2 - \beta)F)
\end{equation*} 
is ample. This is equivalent to $e_1 + \cdots + e_{m-1} + 2 - \beta > me_{m-1}$. 
Then $(m-1)e_{m-1} + 2 - \beta > me_{m-1}$, so $\beta <2-e_{m-1}$. In addition, $\beta \neq 0$, 
otherwise $X$ would be disconnected. Then $1 \leq \beta < 2-e_{m-1}$, so $e_{m-1}=0$
and $\beta=1$. Then $H^1(p^*\mathcal T_{\PP^1}\otimes \mathcal E)=H^1(F)=0$, 
$H^2(\mathcal E)=H^2(-F)=0$ and 
 $H^1(p^*E_0^\vee \otimes \mathcal O_Y(H_0) \otimes \mathcal E)=H^1(\mathcal O_Y(H_0-F))^{m}=
 H^1(\mathcal O_{\PP^1}(-1))^{m}=0$. 
 Then exact sequences \eqref{eq1} and \eqref{eq2} and the vanishing of 
$H^1(p^*\mathcal T_{\PP^1}\otimes \mathcal E)$, $H^1(p^*E_0^\vee \otimes \mathcal O_Y(H_0) \otimes \mathcal E)$ 
and $H^2(\mathcal E)$ imply $H^1(\mathcal T_Y \otimes \mathcal E)=0$.
\end{proof}

\noindent We now study the existence or non existence of 
 nonsplit Fano ribbons on varieties $Y$ as in  Notation~\ref{notation.Y}
 (for the definitions of ribbon and
nonsplit ribbon, see \cite[\S 1]{BE}; by a Fano ribbon   
 we mean that $h^0(\mathcal O_{\widetilde Y})=1$ and 
that the dual of the  
dualizing sheaf $\omega_{\widetilde Y}$ of $\widetilde Y$ is ample): 

\begin{corollary}\label{cor.Fano.ribbons}
Let $Y$ be as in Notation~\ref{notation.Y}.
There are no nonsplit Fano ribbons  $\widetilde Y$
on $Y$ except if $Y$ is a hyperquadric in $\PP^{m+1}$ and 
$\widetilde Y$ is the unique ribbon in $\PP^{m+1}$ supported on $Y$.
\end{corollary}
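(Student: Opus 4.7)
The plan is to reduce the existence of a nonsplit Fano ribbon on $Y$ with conormal $\mathcal L$ to the non--vanishing of $H^1(\mathcal T_Y \otimes \mathcal L)$, and then to rule out this cohomology by invoking Proposition~\ref{theorem.Fano}. By \cite[\S 1]{BE} the isomorphism classes of ribbons on $Y$ with conormal line bundle $\mathcal L$ are in bijection with $\mathrm{Ext}^1(\Omega_Y, \mathcal L)/k^*$ together with one extra class for the split ribbon. Since $Y$ is smooth, $\mathrm{Ext}^1(\Omega_Y, \mathcal L) = H^1(\mathcal T_Y \otimes \mathcal L)$; hence no nonsplit ribbon on $Y$ with conormal $\mathcal L$ exists whenever $H^1(\mathcal T_Y \otimes \mathcal L) = 0$, and a unique one (up to isomorphism) exists when this space is one--dimensional.

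To apply Proposition~\ref{theorem.Fano}, I first translate the Fano hypothesis on $\widetilde Y$ into a condition on $\mathcal L$. A standard computation (adjunction applied to the split ribbon, realized as the Cartier divisor $2Y$ in the total space of $\mathcal L^{-1}$, extended to arbitrary ribbons by a local argument) yields $\omega_{\widetilde Y}|_Y = \omega_Y \otimes \mathcal L^{-1}$, so $\omega_{\widetilde Y}^{-1}$ is ample if and only if $\omega_Y^{-1} \otimes \mathcal L$ is ample on $Y$. Taking $H^0$ of $0 \to \mathcal L \to \mathcal O_{\widetilde Y} \to \mathcal O_Y \to 0$ and using $h^0(\mathcal O_{\widetilde Y}) = h^0(\mathcal O_Y) = 1$ further forces $h^0(\mathcal L) = 0$. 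These two conditions on $(Y, \mathcal L)$ match exactly the hypotheses imposed on $(Y, \mathcal E)$ in the (purely cohomological) proof of Proposition~\ref{theorem.Fano}: ``$X$ Fano'' there amounts, via $\omega_X = \pi^*(\omega_Y \otimes \mathcal E^{-1})$, to $\omega_Y^{-1} \otimes \mathcal E$ ample, and ``$X$ connected'' to $h^0(\mathcal E) = 0$. Setting $\mathcal E := \mathcal L$, Proposition~\ref{theorem.Fano} yields $H^1(\mathcal T_Y \otimes \mathcal L) = 0$, and hence no nonsplit Fano ribbon exists, except possibly when $Y$ is a hyperquadric with $\mathcal L = \mathcal O_Y(-\mathfrak h)$ or $\mathcal L = \mathcal O_Y(-2\mathfrak h)$.

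These two exceptional cases are settled by direct computation from the normal bundle sequence $0 \to \mathcal T_Y \to \mathcal T_{\PP^{m+1}}|_Y \to \mathcal O_Y(2\mathfrak h) \to 0$ twisted by $\mathcal L$, combined with the twisted restricted Euler sequence of $\PP^{m+1}$ and the usual vanishings on smooth hyperquadrics of dimension $m \geq 3$, along the same lines as Case~2 of the proof of Proposition~\ref{theorem.Fano}. For $\mathcal L = \mathcal O_Y(-\mathfrak h)$, the long exact sequences exhibit $H^1(\mathcal T_Y \otimes \mathcal L)$ as the cokernel of a map $H^0(\mathcal T_{\PP^{m+1}}|_Y(-\mathfrak h)) \to H^0(\mathcal O_Y(\mathfrak h))$ between spaces both of dimension $m+2$; this map can be identified with the Jacobian $(\partial_0 q, \ldots, \partial_{m+1} q)$ of the defining quadratic form $q$ of $Y$, so smoothness of $Y$ makes this Jacobian nondegenerate and the map an isomorphism, giving $H^1(\mathcal T_Y \otimes \mathcal L) = 0$ and no nonsplit ribbon. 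For $\mathcal L = \mathcal O_Y(-2\mathfrak h)$, the analogous computation yields $H^1(\mathcal T_Y \otimes \mathcal L) \cong k$; the first infinitesimal neighborhood of $Y$ in $\PP^{m+1}$ is a nonsplit ribbon with this conormal and is Fano (its anticanonical restricts to $\mathcal O_Y((m-2)\mathfrak h)$, ample for $m \geq 3$), so by the uniqueness just established it is the only nonsplit Fano ribbon on $Y$.

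The main obstacle I anticipate is the explicit analysis of the case $\mathcal L = \mathcal O_Y(-\mathfrak h)$: identifying the relevant connecting map with the Jacobian of the defining quadric of $Y$, and then invoking the smoothness of $Y$ to conclude that this map is an isomorphism, is the conceptual key that ultimately kills $H^1(\mathcal T_Y \otimes \mathcal O_Y(-\mathfrak h))$ and completes the exceptional case analysis.
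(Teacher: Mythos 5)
Your proposal is correct and follows essentially the same route as the paper: reduce to the (non)vanishing of $\mathrm{Ext}^1(\Omega_Y,\mathcal L)=H^1(\mathcal T_Y\otimes\mathcal L)$ via \cite[Corollary 1.4]{BE}, translate the Fano condition and $h^0(\mathcal O_{\widetilde Y})=1$ into the ampleness and connectedness hypotheses of Proposition~\ref{theorem.Fano}, and then settle the excluded hyperquadric cases by direct computation with the twisted normal and restricted Euler sequences (the paper kills $\mathcal O_Y(-\mathfrak h)$ by the same cokernel argument, phrased as a multiplication-of-sections map rather than the Jacobian of the defining quadric). Your handling of $\mathcal L=\mathcal O_Y(-2\mathfrak h)$, where you compute $h^1(\mathcal T_Y\otimes\mathcal L)=1$ and identify the unique nonsplit ribbon as the Fano double quadric in $\PP^{m+1}$, is in fact spelled out more explicitly than in the paper, which leaves that exceptional case to the statement.
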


\begin{proof}
Let $\widetilde Y$ be a Fano ribbon supported on $Y$ and let 
$\widetilde{\mathcal E}$ be the conormal bundle of $Y$ in $\widetilde Y$. 
Since $\omega_{\widetilde Y}^{-1}$ is ample, by 
\cite[Lemma 1.4]{Enriques}, so is $\omega^{-1}_Y \otimes \widetilde{\mathcal E}$, so 
$\widetilde{\mathcal E}$ can be thought as the trace--zero module of a cover 
$\pi: X \longrightarrow Y$ as in Proposition~\ref{theorem.Fano}, except for the 
fact that $|\widetilde{\mathcal E}|$ might not contain a smooth divisor. 
Since we do not 
use this property of $\mathcal E$ in the proof of Proposition~\ref{theorem.Fano}
and since $h^0(\mathcal O_{\widetilde Y})=1$ translates into $X$ being connected, 
it follows from Proposition~\ref{theorem.Fano} that 
$\mathrm{Ext}^1(\Omega_Y, \widetilde{\mathcal E})=0$ except maybe if $Y$ is a hyperquadric and 
$\widetilde{\mathcal E}=\mathcal O_Y(-\mathfrak h)$. In this case 
$H^0(\mathcal T_{\PP^{m+1}} \otimes \widetilde{\mathcal E})=
H^0(\mathcal O_Y)^{\oplus m+2}$ and, 
taking cohomology on \eqref{eq.normal.sequence.hyperquadric}, we see that 
$H^1(\mathcal T_Y \otimes \widetilde{\mathcal E})$ is the cokernel of the map 
\begin{equation*}
 H^0(\mathcal O_Y(\mathfrak h)) \otimes H^0(\mathcal O_Y) \longrightarrow 
 H^0(\mathcal O_Y(\mathfrak h))
\end{equation*}
of multiplication of global sections, which is, trivially, an isomorphism. Thus 
$\mathrm{Ext}^1(\Omega_Y, \widetilde{\mathcal E})=0$ also if $Y$ is a hyperquadric and 
$\widetilde{\mathcal E}=\mathcal O_Y(-\mathfrak h)$. 
Then the result follows from 
\cite[Corollary 1.4]{BE}.
\end{proof}

\begin{corollary}\label{cor.Fano.Hom}
Let $X$, $Y$, $\varphi$, $\mathcal E$ and $\mathcal I$ be as in 
notations \ref{setup} and \ref{notation.Y}. 
Assume 
$X$ is Fano
and $\varphi$ is  induced by a complete linear series
(i.e., $H^0(\mathcal E(1))=0$). 
Then 
 $\mathrm{Hom}(\mathcal I/\mathcal I^2, \mathcal E)=0$, except if  $Y$ is a 
 hyperquadric and
 $\mathcal E = \mathcal O_Y(-2\mathfrak h)$, in which case 
 $\mathrm{Hom}(\mathcal I/\mathcal I^2, \mathcal E)$ has dimension $1$.
\end{corollary}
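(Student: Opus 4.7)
Since $Y$ is smooth, $\mathcal I/\mathcal I^2$ is the conormal bundle of $i(Y)$ in $\PP^N$, so $\mathrm{Hom}(\mathcal I/\mathcal I^2,\mathcal E)=H^0(\mathcal N_{i(Y)/\PP^N}\otimes \mathcal E)$. I first dispose of the exceptional case: if $Y$ is a hyperquadric and $\mathcal E=\mathcal O_Y(-2\mathfrak h)$, writing $\mathcal O_Y(1)=\mathcal O_Y(k\mathfrak h)$ with $k\geq 1$, the assumption $H^0(\mathcal E(1))=0$ forces $k=1$; hence $i$ is the standard embedding of $Y$ in $\PP^{m+1}$, $\mathcal I/\mathcal I^2=\mathcal O_Y(-2\mathfrak h)\cong \mathcal E$, and $\mathrm{Hom}(\mathcal I/\mathcal I^2,\mathcal E)=H^0(\mathcal O_Y)=\mathbf k$. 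The same calculation shows that the other excluded case of Proposition~\ref{theorem.Fano}, namely $\mathcal E=\mathcal O_Y(-\mathfrak h)$ on a hyperquadric, is already incompatible with the hypothesis $H^0(\mathcal E(1))=0$ since $H^0(\mathcal O_Y((k-1)\mathfrak h))\neq 0$ for $k\geq 1$.

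For the remaining cases I would combine two exact sequences. Tensoring the normal bundle sequence of $Y\subset \PP^N$ with $\mathcal E$ and taking cohomology yields, using $H^1(\mathcal T_Y\otimes \mathcal E)=0$ from Proposition~\ref{theorem.Fano}, a surjection $H^0(\mathcal T_{\PP^N}|_Y\otimes \mathcal E)\twoheadrightarrow H^0(\mathcal N_{i(Y)/\PP^N}\otimes \mathcal E)$. Tensoring the restricted Euler sequence $0\to \mathcal O_Y\to \mathcal O_Y(1)^{\oplus (N+1)}\to \mathcal T_{\PP^N}|_Y\to 0$ with $\mathcal E$ and taking cohomology yields, under the hypothesis $H^0(\mathcal E(1))=0$, an injection $H^0(\mathcal T_{\PP^N}|_Y\otimes \mathcal E)\hookrightarrow H^1(\mathcal E)$. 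Hence the whole statement reduces to showing $H^1(\mathcal E)=0$ in each of the three geometric types of $Y$ from Notation~\ref{notation.Y}.

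The vanishing $H^1(\mathcal E)=0$ I verify case by case. For $Y=\PP^m$ with $m\geq 2$ it is immediate. For $Y$ a smooth hyperquadric in $\PP^{m+1}$ with $m\geq 3$, twisting $0\to \mathcal O_{\PP^{m+1}}(-2)\to \mathcal O_{\PP^{m+1}}\to \mathcal O_Y\to 0$ by $\mathcal E$ and using the vanishing of intermediate cohomology on $\PP^{m+1}$ suffices. For $Y$ a projective bundle over $\PP^1$, using the notation of Proposition~\ref{theorem.Fano} and writing $\mathcal E=\mathcal O_Y(-\alpha H_0-\beta F)$: the subcase $\alpha=0$ would, by the Fano and connectedness analysis in Proposition~\ref{theorem.Fano}, force $Y=\PP^1\times \PP^{m-1}$ and $\mathcal E=\mathcal O(-1,0)$, hence $H^0(\mathcal E(1))\neq 0$ since $\mathcal O_Y(1)$ is ample, contradicting the hypothesis; for $\alpha\geq 1$, using the Fano bound $\alpha\leq m-1$, one has $p_*\mathcal O_Y(-\alpha H_0)=0$ and $R^1p_*\mathcal O_Y(-\alpha H_0)=0$ (the latter by fiber cohomology, with $\alpha=1$ being the only possibility when $m=2$), so the Leray spectral sequence gives $H^1(\mathcal E)=0$. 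The main obstacle I anticipate is this bookkeeping in the projective-bundle subcase, but it closely parallels the calculations in Proposition~\ref{theorem.Fano} so it should present no serious difficulty.
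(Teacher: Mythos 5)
Your proposal is correct and follows essentially the same route as the paper: isolate the hyperquadric case $\mathcal E=\mathcal O_Y(-2\mathfrak h)$ (where $\mathcal I/\mathcal I^2\cong\mathcal E$ gives dimension $1$), rule out $\mathcal E=\mathcal O_Y(-\mathfrak h)$ via $H^0(\mathcal E(1))=0$, and for the rest combine the normal-bundle sequence with $H^1(\mathcal T_Y\otimes\mathcal E)=0$ from Proposition~\ref{theorem.Fano} and the restricted Euler sequence with $H^0(\mathcal E(1))=0$, reducing everything to $H^1(\mathcal E)=0$. The only divergence is that the paper obtains $H^1(\mathcal E)=0$ in one stroke by Kodaira vanishing (since $\omega_Y^{-1}\otimes\mathcal E$ is ample when $X$ is Fano), whereas you verify it case by case; both arguments are valid.
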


\begin{proof}
Taking cohomology on the conormal sequence of $i(Y)$ in $\PP^N$ we get 
\begin{equation}\label{eq.cor.Fano}
 \mathrm{Hom}(\Omega_{\PP^N}|_{i(Y)}, \mathcal E) 
 \overset{\gamma}\longrightarrow 
 \mathrm{Hom}(\mathcal I/\mathcal I^2, \mathcal E)
 \longrightarrow \mathrm{Ext}^1(\Omega_Y,\mathcal E)
 \longrightarrow 
 \mathrm{Ext}^1(\Omega_{\PP^N}|_{i(Y)}, \mathcal E). 
\end{equation} 
We are going to see $H^0(\mathcal T_{\PP^N}|_{i(Y)} \otimes \mathcal E)=0$. 
 For this we need $H^0(\mathcal E(1))=0$
 and $H^1(\mathcal E)=0$. The former holds because $\varphi$ 
 factors through $\pi$ and 
 $\varphi$ is induced by a complete linear series. 
 The latter 
 holds by Kodaira vanishing theorem, 
 because $\omega_X^{-1}=\pi^*(\omega_Y^{-1} \otimes \mathcal E)$ is ample, 
 hence so is $\omega_Y^{-1} \otimes \mathcal E$.

 \smallskip
 \noindent If $Y$ is not a hyperquadric, then
 the result follows from  \eqref{eq.cor.Fano}
 and Proposition~\ref{theorem.Fano}. 
 If $Y$ is a hyperquadric, 
 since $H^0(\mathcal E(1))=0$, then $\mathcal E=
 \mathcal O_Y(\delta \mathfrak h)$ with $\delta \leq -2$. 
If $\delta \leq -3$, then
 the result follows from  \eqref{eq.cor.Fano}
 and Proposition~\ref{theorem.Fano}.

 \smallskip
 \noindent If $Y$ is  a hyperquadric and 
 $\mathcal E = \mathcal O_Y(-2\mathfrak h)$, then 
 $H^0(\mathcal E(1))=0$ implies 
 $L=\pi^*\mathcal O_Y(\mathfrak h)$ and $i$ in the 
 embedding of $Y$ as a hyperquadric in $\PP^{m+1}$. 
 Then $\mathcal I/\mathcal I^2 \simeq 
 \mathcal O_Y(-2\mathfrak h)$, so 
 $\mathrm{Hom}(\mathcal I/\mathcal I^2,  \mathcal E)$ is 
 isomorphic to $H^0(\mathcal O_Y)$ in this case. 
 \end{proof}

\begin{theorem}\label{thm.Fano.deform}
{Let $X$, $Y$ and  $\varphi$ be as in 
notations \ref{setup} and \ref{notation.Y}. Let 
$X$ be a  Fano variety
and let the morphism $\varphi$ from $X$ to $\PP^N$ be  
induced by a complete linear series 
(i.e., $H^0(\mathcal E(1))=0$).
\begin{enumerate}
 \item If $Y$ is a hyperquadric, 
 assume  
 $\mathcal E \neq \mathcal O_Y(-2\mathfrak h)$.
 \item If $Y$ is a projective bundle over $\PP^1$, assume 
 $B$ is base--point--free.  
\end{enumerate}
Then $\varphi$ is unobstructed and any deformation of $\varphi$
is a finite morphism of degree
 $2$ onto its image, which is a deformation of $i(Y)$ in $\PP^N$.} 
\end{theorem}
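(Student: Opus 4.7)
The strategy is to invoke Theorem~\ref{Psi2=0.3}, so the task is to verify its five hypotheses under the assumptions of Theorem~\ref{thm.Fano.deform}. The vanishing $\Psi_2=0$, which is the heart of the matter, will be essentially automatic from the work already done: Corollary~\ref{cor.Fano.Hom} shows that $\mathrm{Hom}(\mathcal I/\mathcal I^2,\mathcal E)=0$ under the hypothesis that $\varphi$ is induced by a complete linear series, except in the single excluded case where $Y$ is a hyperquadric and $\mathcal E=\mathcal O_Y(-2\mathfrak h)$. Since we have explicitly ruled that case out, the target of $\Psi_2$ is zero and hypothesis (5) of Theorem~\ref{Psi2=0.3} holds trivially.

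For hypothesis (1), I would observe that for all three types of $Y$ in Notation~\ref{notation.Y} we have $H^2(\mathcal O_Y)=0$: direct for $\PP^m$; for a smooth hyperquadric of dimension $m\geq 3$ via the exact sequence \eqref{eq.presentation.structure.sheaf.hyperquadric} and the vanishing of intermediate cohomology on $\PP^{m+1}$; and for a $\PP^{m-1}$-bundle over $\PP^1$ via the Leray spectral sequence applied to $p$. Consequently the cup-product obstruction $H^1(\mathcal T_Y)\to H^2(\mathcal O_Y)$ against $c_1(\mathcal E)$ vanishes automatically, so $\mathcal E$ lifts to every infinitesimal deformation of $Y$. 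Hypothesis (3), $H^1(\mathcal O_Y)=0$, is standard for all three types by the same arguments.

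Hypothesis (2), $H^1(\mathcal E^{-2})=0$, I would check case by case, exploiting the explicit forms of $\mathcal E$ forced by ampleness of $\omega_Y^{-1}\otimes\mathcal E$ (i.e.\ by $X$ being Fano) together with $H^0(\mathcal E(1))=0$. For $Y=\PP^m$, $\mathcal E^{-2}=\mathcal O(2d)$ with $d>0$ and the vanishing is immediate. For $Y$ a hyperquadric, combining the Fano condition with the exclusion $\mathcal E\neq\mathcal O_Y(-2\mathfrak h)$ forces $\mathcal E=\mathcal O_Y(\delta\mathfrak h)$ with $\delta\leq -3$, and $H^1(\mathcal O_Y(-2\delta\mathfrak h))=0$ again follows from \eqref{eq.presentation.structure.sheaf.hyperquadric}. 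For $Y=\PP(E_0)$ over $\PP^1$, the base-point-freeness hypothesis on $B\sim 2\alpha H_0+2\beta F$ yields $\alpha\geq 0$ and $2\beta\geq 2\alpha e_{m-1}$, so via the Leray spectral sequence for $p$ the group $H^1(\mathcal O_Y(2\alpha H_0+2\beta F))$ reduces to a sum of $H^1$'s on $\PP^1$ of line bundles of nonnegative degree, which vanish.

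The main obstacle is hypothesis (4), unobstructedness of $Y$ in $\PP^N$. I would handle it by showing $H^1(\mathcal N_{Y/\PP^N})=0$: for $\PP^m$ and its (pluri)embeddings this is a standard Euler-sequence computation; for the standard hyperquadric embedding $\mathcal N_{Y/\PP^{m+1}}=\mathcal O_Y(2\mathfrak h)$ and the vanishing is direct; for a projective bundle over $\PP^1$ embedded by a complete linear series, $\mathcal N_{Y/\PP^N}$ can be analysed via the restricted Euler sequence and the relative tangent sequence along $p$, the requisite $H^1$-vanishings following in the same style as in Proposition~\ref{theorem.Fano}. Once all five hypotheses are verified, Theorem~\ref{Psi2=0.3} yields both the unobstructedness of $\varphi$ and the fact that every deformation of $\varphi$ is a finite degree-$2$ morphism onto a deformation of $i(Y)$ in $\PP^N$, which is precisely the conclusion of Theorem~\ref{thm.Fano.deform}.
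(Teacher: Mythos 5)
Your proposal is correct and takes essentially the same route as the paper: one verifies hypotheses (1)--(5) of Theorem~\ref{Psi2=0.3} (hypothesis (5) via Corollary~\ref{cor.Fano.Hom}, which applies precisely because $\mathcal E \neq \mathcal O_Y(-2\mathfrak h)$ is excluded, and hypothesis (2) in the scroll case via the equivalence of base--point--freeness of $B$ with $\beta \geq \alpha e_{m-1}$) and then applies that theorem to get both unobstructedness and the degree-$2$ statement. The only difference is that you spell out vanishings the paper records as well known ($H^1(\mathcal O_Y)$, $H^2(\mathcal O_Y)$, $H^1(\mathcal N_{i(Y),\PP^N})$); just be aware that for the hyperquadric the embedding $i$ need not be the standard one in $\PP^{m+1}$ (only $H^0(\mathcal E(1))=0$ is assumed, so $i$ may be given by $|\mathcal O_Y(k\mathfrak h)|$ with $k\geq 2$), but the same Euler-sequence computation you invoke for the pluri-embeddings of $\PP^m$ covers that case as well.
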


\begin{proof}
 If $Y$ is as Notation~\ref{notation.Y}, then 
 it is well-known that $H^1(\mathcal O_Y)$, 
 $H^2(\mathcal O_Y)$ and 
 $H^1(\mathcal N_{i(Y),\PP^N})$ vanish. 
 Thus 
 hypotheses (1), (3) and (4) of 
 Theorem~\ref{Psi2=0.3} are satisfied. 
 If $Y$ is $\PP^m$ or a hyperquadric, then 
 $H^1(\mathcal E^{-2})=0$ because the vanishing of 
 cohomology in projective space.
 If $Y$ is a projective bundle over $\PP^1$, then 
 condition (2) of the statement is equivalent to 
 $\beta \geq \alpha e_{m-1}$, with 
 $\alpha, \beta$ and $e_{m-1}$ as in \eqref{notation.Y.Fano}. 
 Then condition (2) implies 
 $H^1(\mathcal E^{-2})=0$. Thus hypothesis (2) 
 of Theorem~\ref{Psi2=0.3} is also satisfied.
 Finally, hypothesis (5) of Theorem~\ref{Psi2=0.3}
 follows from Corollary~\ref{cor.Fano.Hom}.
\end{proof}

\noindent The cases dealt with in the next proposition 
have already been covered by Theorem~\ref{thm.Fano.deform} except maybe 
if assumption (2) of Theorem~\ref{thm.Fano.deform} does not hold; that
is why we include it here. 

\begin{proposition}\label{prop.Fano.deform.lK}
 Let $X$, $Y$ and  $\varphi$ be as in 
notations \ref{setup} and \ref{notation.Y}. Let 
$X$ be a  Fano variety
and let the morphism $\varphi$ from $X$ to $\PP^N$ be  
induced by a complete linear series $|L|$, where 
$L=\omega_X^{-l}$ for some $l \in \NN$.
Let $Y$ be a projective bundle over $\PP^1$.
Then any
deformation of $\varphi$
is a finite morphism of degree
 $2$ onto its image, which is a deformation of $i(Y)$ in $\PP^N$.
\end{proposition}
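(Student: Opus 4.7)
The plan is to exploit the extra hypothesis $L=\omega_X^{-l}$ in order to bypass the base-point-free assumption on $B$ that appears in Theorem~\ref{thm.Fano.deform}. All the other hypotheses of that theorem are already in force: conditions (1), (3), (4) of Theorem~\ref{Psi2=0.3} hold for $Y$ a projective bundle over $\PP^1$ by the standard cohomological vanishings, and condition (5) is Corollary~\ref{cor.Fano.Hom}. Only the vanishing $H^1(\mathcal E^{-2})=0$ required by condition (2) may fail, and my aim is to argue around it by a ribbon obstruction.

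First I would apply Lemma~\ref{lemma.deformation.multiples.canonical} to conclude that, for any deformation $\Phi:\mathcal X\to\PP^N_Z$ of $\varphi$, each $\Phi_z$ is induced by $|\omega_{\mathcal X_z}^{-l}|$; in particular $\mathcal X_z$ is Fano with polarization $L_z=\omega_{\mathcal X_z}^{-l}$, and $L_z^m=L^m$ is constant. Arguing by contradiction, suppose the general $\Phi_z$ is not a degree-$2$ finite morphism onto a deformation of $i(Y)$. Since $\Phi_0=\varphi=i\circ\pi$ has degree $2$ onto $i(Y)$ and $L_z^m$ is constant, the only alternative compatible with semicontinuity is that $\Phi_z$ becomes birational onto its image, with $\Phi_z(\mathcal X_z)$ of degree $2\deg i(Y)$ in $\PP^N$. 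I would then examine the flat family of images $\{\Phi_z(\mathcal X_z)\}_{z\in Z}$ inside $\PP^N$. Its generic fiber is a smooth Fano variety isomorphic to $\mathcal X_z$, while the special fiber is set-theoretically supported on $i(Y)$ yet has degree $2\deg i(Y)$; hence it is a locally Gorenstein double structure $\widetilde Y$ on $i(Y)$, i.e.\ a ribbon. The identification $\mathcal X_z\simeq\Phi_z(\mathcal X_z)$ together with $L_z=\omega_{\mathcal X_z}^{-l}$ transports the anticanonical polarization to the image, and taking the flat limit forces $\omega_{\widetilde Y}^{-1}$ to be ample, so $\widetilde Y$ is a Fano ribbon. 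Moreover $\widetilde Y$ must be \emph{nonsplit}, because a split double structure cannot be the flat limit of a family of smooth irreducible varieties. But Corollary~\ref{cor.Fano.ribbons} rules out nonsplit Fano ribbons supported on a projective bundle over $\PP^1$ (the hyperquadric exception of that corollary is not applicable here). The resulting contradiction forces every $\Phi_z$ to be a degree-$2$ finite morphism, and the family $\{\Phi_z(\mathcal X_z)\}$ provides the desired deformation of $i(Y)$ in $\PP^N$.

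The main obstacle is the precise identification of the flat limit as a nonsplit Fano ribbon matching the hypotheses of Corollary~\ref{cor.Fano.ribbons}: one must verify that $\omega_{\widetilde Y}^{-1}$ is genuinely ample (not merely nef) and that the extension class defining $\widetilde Y$ is nonzero. This is where the dictionary between deformations of degree-$2$ covers and ribbons developed in \cite{Gonzalez}, together with the smoothing framework behind \cite[Theorem 1.4]{deformation} and \cite[Theorem 1.5]{smoothing}, provides the right tools: the smoothness and irreducibility of the generic fiber control the extension class, while the Fano condition on $\mathcal X_z$ controls the dualizing sheaf of the limit. Once these invariants are pinned down, Corollary~\ref{cor.Fano.ribbons} closes the argument.
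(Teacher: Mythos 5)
Your reduction to ruling out degree-one deformations is reasonable, but the contradiction step has two genuine holes. First, the flat limit of the image family $\{\Phi_z(\mathcal X_z)\}$ need not be a ribbon: it is a subscheme supported on $i(Y)$, containing $i(Y)$, of twice its degree, but nothing guarantees it is locally Cohen--Macaulay (let alone locally Gorenstein) or that the kernel of $\mathcal O_{\widetilde Y}\to\mathcal O_Y$ is a line bundle; flat limits of smooth varieties routinely acquire embedded components, and the general images in your degree-one branch are merely birational (possibly quite singular) images of $\mathcal X_z$, which makes the limit even harder to control. Corollary~\ref{cor.Fano.ribbons} only excludes nonsplit \emph{ribbons}, so it says nothing about such a limit. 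Second, even granting a ribbon limit, your assertion that a split double structure cannot be a flat limit of smooth irreducible varieties is false: the double line $\{x^2=0\}\subset\PP^2$ is the \emph{split} ribbon on $\PP^1$ with conormal $\mathcal O_{\PP^1}(-1)$ (since $\mathrm{Ext}^1(\Omega_{\PP^1},\mathcal O_{\PP^1}(-1))=0$ every such ribbon is split), yet it is a flat limit of smooth conics. So nonsplitness of the limit, as well as ampleness of $\omega_{\widetilde Y}^{-1}$ on the limit, are exactly the points that would need proof, and they are not supplied by the cited dictionary, which runs in the opposite direction (from smoothable embedded ribbons to deformations of $\varphi$, under extra hypotheses), not from an arbitrary degree-one deformation to a well-behaved ribbon limit.

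The paper's own proof avoids limits of images altogether and is worth comparing. Given any deformation $\Phi$ of $\varphi$, the vanishing $H^1(\mathcal T_Y\otimes\mathcal E)=0$ of Proposition~\ref{theorem.Fano}, combined with \cite[Corollary 1.11]{Wehler} and \cite[Theorem 8.1]{Horikawa}, shows that the underlying family $\mathcal X$ carries a finite degree-two map $\Pi$ onto a deformation $\mathcal Y$ of $Y$; since $H^1(\mathcal T_{\PP^N}|_Y)=0$, the deformation of $Y$ can be realized as a deformation $(\mathcal Y,\mathfrak i)$ of $(Y,i)$ with $\mathfrak i$ a relative embedding, so $\Phi'=\mathfrak i\circ\Pi$ is a second deformation of $\varphi$ which is manifestly of degree two. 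The hypothesis $L=\omega_X^{-l}$ then enters only through Lemma~\ref{lemma.deformation.multiples.canonical}: both $\Phi_z$ and $\Phi'_z$ are induced by $|\omega_{\mathcal X_z}^{-l}|$, hence agree up to an automorphism of $\PP^N$, forcing $\Phi_z$ to have degree two. If you want to salvage your strategy, you would have to prove the missing facts about the limit (local Gorenstein-ness, invertibility of the conormal, nonsplitness, ampleness of the anti-dualizing sheaf), which is considerably more work than the direct argument above.
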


\begin{proof}
Let $Z$ be a smooth algebraic variety 
with a distinguished point $0$ and 
let $(\mathcal X, \Phi)$ be a flat family
over $Z$, with $(\mathcal X_0, \Phi_0)=(X, \varphi)$. 
Proposition~\ref{theorem.Fano} and \cite[Corollary 1.11]{Wehler} 
(see also \cite[Theorem 8.1]{Horikawa})   imply that 
$\mathcal X$ is equipped, over an analytic neighborhood 
$U$ of $0$ in $Z$, 
with a  deformation $(\mathcal X,\Pi)$ 
 of $(X,\pi)$, where $\Pi$
 is finite, surjective and of degree $2$
(we make an abuse of notation and keep calling 
 the restriction of the family $\mathcal X$ to $U$ as $\mathcal X$). 
 We call $\mathcal Y$ to the image of $\Pi$. 
 Since $H^1(\mathcal O_Y(1))=
 H^2(\mathcal O_Y)=0$, we have $H^1(\mathcal T_{\PP^N}|_Y)=0$. 
 This and 
 \cite[Theorem 8.1]{Horikawa} imply that 
the deformation $\mathcal Y$ of $Y$ can be realized as 
a deformation $(\mathcal Y, \mathfrak i)$ of $(Y,i)$. 
Since $i$ is an embedding, 
we can assume, after shrinking $U$, that $\mathfrak i$ 
is a relative 
embedding of $\mathfrak Y$ in $\mathbf P^N_U$.
Then $\mathfrak i \circ \Pi=\Phi'$ is a deformation of
$\varphi$.
 By Lemma~\ref{lemma.deformation.multiples.canonical}, 
after shrinking $U$  if necessary, for all $z \in U$ we have 
$\Phi_z$ and 
$\Phi'_z$ are induced by $|\omega_{\mathcal X_z}^{-l}|$. 
 Thus, for all $z \in U$, $\Phi_z$ and $\Phi'_z$ are 
equal up to automorphisms of $\PP^N$. Thus, shrinking $U$ again 
if necessary, 
$\Phi_z$ is of degree $2$ for all $z \in U$. 
\end{proof}

\begin{remark}\label{remark.Fano.Wehler}
 {\rm If $X$, $Y$ and  $\varphi$  are as in Theorem~\ref{thm.Fano.deform}
or Proposition~\ref{prop.Fano.deform.lK} and 
$Y$ is a projective bundle 
over $\PP^1$, but neither condition (2) of 
Theorem~\ref{thm.Fano.deform} nor 
the condition on $L$ in Proposition~\ref{prop.Fano.deform.lK} hold, still 
something can be said about the deformations of $\varphi$. 
Indeed, it 
follows from Proposition~\ref{theorem.Fano}, 
\cite[Corollary 1.11]{Wehler} 
and \cite[Theorem 8.1]{Horikawa} that, 
for any 
deformation $\mathcal X$ of $X$, there exists a 
deformation of $(\mathcal X,\Phi)$ of $(X,\varphi)$
which is finite and of degree $2$ onto its image.}
\end{remark}

\noindent 
We give the definition of 
Fano-K3 polarized variety, which is
equivalent to \cite[Definition 1.5]{Fujita}
(note that the condition of the ring 
$R(L)= \oplus_{n=0}^\infty H^0(L^{\otimes n})$
being Cohen-Macaulay, which is required 
by \cite[Definition 1.5]{Fujita}, is deduced
from Definition~\ref{defi.Fano-K3}, Kodaira vanishing theorem and  
Serre duality). 
\begin{definition}\label{defi.Fano-K3}
{\rm We say that a polarized variety 
$(X,L)$ of dimension $m$, $m \geq 3$, is Fano-K3 if it is a Fano polarized 
variety of index $m-2$, i.e., if $\omega_X^{-1}=L^{\otimes m-2}$.}
\end{definition}
\begin{proposition}\label{prop.FanoK3.classification}
Let $X$, $Y$, $\mathcal E$, $i$ and $\varphi$ be 
as in Notation~\ref{setup}, \ref{notation.Y} and 
\eqref{notation.Y.Fano}, and assume $(X,L)$ is 
a hyperelliptic Fano-K3 variety of dimension 
$m \geq 3$ and that $\varphi$ is induced 
by $|L|$. Then $i(Y)$ is a variety of minimal degree and 
$Y$, $\mathcal E^{-2}$ and $B$ are as follows: 
\begin{enumerate}
 \item If $Y=\PP^m$, then $\mathcal E^{-2}=
 \mathcal O_{\PP^m}(6)$ (in this case, $g=2$).  
 \item If $Y$ is a hyperquadric, 
 then $B \sim
 4\mathfrak h$ (in this case, $g=3$). 
 \item If $Y$ is a projective bundle over $\PP^1$, 
 then 
 \begin{enumerate}
  \item $m=4$, $e_1=e_2=e_3=0$, $L \sim 
  H_0+F$ and $B \sim 4H_0$ (in this case, $g=5$); 
  \item $m=3$, $e_1=e_2=0$ and $L \sim 
  H_0+2F$ and $B \sim 4H_0$ (in this case, $g=7$); 
  \item $m=3$, $e_1=e_2=0$ and $L \sim 
  H_0+F$ and 
  $B \sim 4H_0+2F$ (in this case, $g=4$); 
  \item $m=3$, $e_1=e_2=-1$, $L \sim 
  H_0+2F$ and $B \sim 4H_0$ (in this case, $g=5$).
 \end{enumerate}
\end{enumerate}
\end{proposition}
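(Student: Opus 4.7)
Since $(X,L)$ is hyperelliptic, $\varphi$ factors as $\varphi = i \circ \pi$ with $\pi \colon X \to Y$ of degree $2$ and $i(Y) \subset \PP^{N}$ of minimal degree; by Notation~\ref{notation.Y} the variety $Y$ is smooth and one of the three listed types. In particular $L = \pi^{\ast}\mathcal O_{Y}(1)$. Since $\operatorname{Pic}(Y)$ is torsion-free in each case ($\mathbf Z$ for $\PP^{m}$ and the quadric, $\mathbf Z^{2}$ for the scroll) and $Y$ is rational, the projection formula applied to an ample class on $Y$ shows that $\pi^{\ast}\colon \operatorname{Pic}(Y) \to \operatorname{Pic}(X)$ is injective.

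The plan is to use this injectivity together with the Fano--K3 relation to pin down $\mathcal E$, and then read off $B = \mathcal E^{-2}$, $L^{m}$ and $g$. From $\omega_{X} = L^{-(m-2)} = \pi^{\ast}\mathcal O_{Y}(-(m-2))$ and $\omega_{X} = \pi^{\ast}(\omega_{Y} \otimes \mathcal E^{-1})$, injectivity of $\pi^{\ast}$ yields the master formula
\begin{equation*}
\mathcal E \;=\; \omega_{Y} \otimes \mathcal O_{Y}(m-2).
\end{equation*}
Moreover, the adjunction/sectional-genus calculation together with $K_{X} = -(m-2)L$ gives $L^{m} = 2g-2$, while $L^{m} = 2(\mathcal O_{Y}(1))^{m}$ since $\pi$ has degree $2$; these pin down $g$ as soon as $\mathcal O_{Y}(1)$ is known.

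For $Y = \PP^{m}$ and for $Y$ a smooth hyperquadric of dimension $m \geq 3$, the minimal-degree condition forces $\mathcal O_{Y}(1) = \mathcal O(1)$ and $\mathcal O_{Y}(\mathfrak h)$ respectively; substitution in the master formula immediately gives parts~(1) and~(2), together with $L^{m} = 2$ (so $g=2$) and $L^{m} = 4$ (so $g=3$). The real work is part~(3), where $Y$ is a scroll. Writing $\mathcal O_{Y}(1) \sim aH_{0} + bF$ and using $\omega_{Y} = -mH_{0} - (e_{1} + \cdots + e_{m-1} + 2)F$, the master formula becomes
\begin{equation*}
\mathcal E \;=\; \bigl((m-2)a - m\bigr)H_{0} \;+\; \bigl((m-2)b - e_{1} - \cdots - e_{m-1} - 2\bigr)F.
\end{equation*}
Since $B = \mathcal E^{-2}$ must be effective (it is the branch divisor of $\pi$) and contain a smooth member ($X$ is smooth), the $H_{0}$-coefficient of $B$ is non-negative, forcing $(m-2)a \leq m$: so $a = 1$ for $m \geq 5$, $a \in \{1,2\}$ for $m = 4$, and $a \in \{1,2,3\}$ for $m = 3$. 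Combined with the analogous bound $(m-2)b \leq e_{1} + \cdots + e_{m-1} + 2$, the very ampleness of $\mathcal O_{Y}(1)$ and the minimal-degree condition on the embedding $i$, a short finite enumeration collapses to exactly the four sub-cases~(a)--(d). For each, $L^{m}$ is computed via the intersection numbers $H_{0}^{m} = e_{1} + \cdots + e_{m-1}$ and $H_{0}^{m-1}F = 1$, whence the listed value of $g$.

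The main obstacle is the enumeration in part~(3): one must exhibit the four sub-cases and exclude every other $(m, e_{i}, a, b)$, in particular ruling out scrolls of dimension $m \geq 5$ and any polarization with $b$ too large. Some care is also needed with the normalization of $E_{0}$, since case~(d) is most naturally stated in a non-normalized form, which accounts for the values $e_{1} = e_{2} = -1$ appearing in the statement and requires interpreting the intersection formulas $H_{0}^{m} = -c_{1}(E_{0})$ consistently.
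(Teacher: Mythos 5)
Your route is genuinely different from the paper's: the paper's entire proof is the adjunction observation $L^m=2g-2$ followed by a citation of Fujita's classification (Proposition 5.18(3) and (6.7) of \cite{Fujita}), whereas you attempt a self-contained derivation. Your first half is fine: injectivity of $\pi^*$ on $\mathrm{Pic}(Y)$, the identity $\mathcal E=\omega_Y\otimes\mathcal O_Y(m-2)$, and the immediate resolution of cases (1) and (2) with the correct degrees and genera are all correct, and if the scroll case were completed this would be a nice replacement for the external reference.

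The gap is in part (3), and it is not just bookkeeping. First, minimal degree already forces $a=1$ (the fibres of $p$ are embedded linearly by $i$), so the discussion of $a\in\{2,3\}$ is moot; more seriously, with $a=1$ the numerical constraints you list (effectivity of the coefficients of $B$, i.e. $(m-2)b\le e_1+\cdots+e_{m-1}+2$, together with ampleness $b\ge e_{m-1}+1$) do \emph{not} produce a finite list: for $m=3$ they are satisfied for every pair $(e_1,e_2)$ (e.g. $(e_1,e_2)=(0,k)$, $b=k+1$ for all $k$), and similarly there are infinitely many admissible $(e_1,e_2,e_3,b)$ for $m=4$. So "a short finite enumeration" does not follow from the constraints as you state them. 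What actually eliminates all of these extra cases is the requirement that $|\mathcal E^{-2}|=|4H_0+cF|$ contain a \emph{smooth} member: whenever the bundle is unbalanced, every divisor in this system is non-reduced or singular along the base locus (e.g. for $m=3$, $(e_1,e_2)=(0,1)$, $b=2$ one gets $B\sim 4H_0+2F$ and every member lies in the square of the ideal of the section $\{x_0=x_1=0\}$, hence is singular along it). You mention the smooth-member condition but only use it to bound the $H_0$-coefficient; the base-locus/reducedness analysis that does the real work of collapsing the infinite numerical list to the four cases is never carried out, and the same analysis is needed to confirm that the surviving case (d) (the cover of $S(1,1,2)$, where $\mathcal E^{-2}=4(H_0+F)$ is base-point-free) is genuine. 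Finally, with the paper's normalization $H_0^m=-(e_1+\cdots+e_{m-1})$, not $+(e_1+\cdots+e_{m-1})$; this sign matters when you compute $L^m$ and hence $g$ in cases (c)--(d), so the convention issue you flag at the end has to be resolved, not just noted.
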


\begin{proof}
 If $(X,L)$ a Fano-K3 variety of dimension $m$,
 then by adjunction $L^m=2g-2$. Now the result 
 follows from 
 \cite[Proposition 5.18 (3), (6.7)]{Fujita}. 
\end{proof}

\noindent As a consequence of Theorem~\ref{thm.Fano.deform}
we obtain this:

\begin{theorem}\label{thm.Fano-K3.deform.hyperelliptic}
Let $X$ and $Y$ be as in 
notations \ref{setup} (2). 
 Let $(X,L)$ a hyperelliptic 
 Fano-K3 variety of dimension $m \geq 3$.
If $Y$ is $\PP^m$ or a rational normal scroll, 
then any 
deformation of $(X,L)$  
is 
 hyperelliptic. 
\end{theorem}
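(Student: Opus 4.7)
The plan is to deduce the theorem from Theorem~\ref{thm.Fano.deform} combined with the classification in Proposition~\ref{prop.FanoK3.classification}.

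First I would show that every deformation $(\mathcal X, \mathcal L)$ of $(X,L)$ produces a deformation of $\varphi$. Since $(X,L)$ is Fano--K3 we have $\omega_X = L^{-(m-2)}$, so $L = \omega_X \otimes L^{m-1}$ with $L^{m-1}$ ample; Kodaira vanishing therefore gives $H^i(L)=0$ for all $i \geq 1$. By semicontinuity and constancy of Euler characteristic, $h^0(\mathcal L_t)=h^0(L)$ on an open neighborhood of the distinguished point of the base. Since ampleness and base--point--freeness are open conditions, the complete linear series $|\mathcal L_t|$ induces morphisms $\Phi_t$ that together form a deformation of $\varphi$ in the sense of the excerpt.

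Next I would verify the hypotheses of Theorem~\ref{thm.Fano.deform}. Since $|L|$ is complete, $H^0(\mathcal E(1))=0$ holds automatically. Proposition~\ref{prop.FanoK3.classification} restricts the possible triples $(Y, \mathcal E, B)$: the hyperquadric case is precisely the one excluded from the present statement, in which $\mathcal E = \mathcal O_Y(-2\mathfrak h)$ and hypothesis (1) of Theorem~\ref{thm.Fano.deform} fails; for $Y = \PP^m$ no further assumption is needed; for the rational normal scroll cases (3)(a)--(d), the branch divisor equals $4H_0$ or $4H_0 + 2F$, and I would check base--point--freeness of $B$ in each subcase directly from the explicit splitting type of $E_0$ recorded in the Proposition. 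Theorem~\ref{thm.Fano.deform} then yields that every deformation of $\varphi$ is a finite morphism of degree $2$ onto a deformation of $i(Y)$ in $\PP^N$. Finally, the image of $\Phi_t$ is irreducible and non-degenerate for $t$ close to the basepoint, with the same Hilbert polynomial, hence the same degree and dimension, as $i(Y)$; since $i(Y)$ is a variety of minimal degree, so is the image of $\Phi_t$. Therefore $(\mathcal X_t, \mathcal L_t)$ is hyperelliptic.

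The main obstacle I anticipate is the case--by--case verification of base--point--freeness of $B$ in the scroll subcases of Proposition~\ref{prop.FanoK3.classification}, particularly subcase (3)(d) where the splitting type of $E_0$ is the most delicate and one cannot simply invoke a trivial summand of $E_0$ to conclude that $H_0$ itself is base--point--free; there one must argue with the sections of $|4H_0|$ coming from $H^0(\PP^1, S^4 E_0)$. Once this combinatorial verification is in place, the rest is a direct assembly of Kodaira vanishing, the classification, and Theorem~\ref{thm.Fano.deform}.
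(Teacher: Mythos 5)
Your proposal is correct and follows essentially the same route as the paper: produce a deformation of $\varphi$ from $(\mathcal X,\mathcal L)$ via $H^1(L)=0$ and constancy of $h^0(\mathcal L_t)$, apply Theorem~\ref{thm.Fano.deform} after checking (using Proposition~\ref{prop.FanoK3.classification}) that $B$ is base--point--free in the scroll cases, and conclude from the image being a deformation of $i(Y)$, hence of minimal degree, that $(\mathcal X_t,\mathcal L_t)$ is hyperelliptic. The only cosmetic differences are that you get $H^1(L)=0$ by Kodaira vanishing on $X$ rather than the projection--formula splitting on $Y$, and you treat $Y=\PP^m$ by the same theorem instead of the paper's appeal to Remark~\ref{remark.deformation.varieties}(2); also, in case (3)(d) the check is easier than you fear, since $e_1=e_2=-1$ means $E_0=\mathcal O_{\PP^1}\oplus\mathcal O_{\PP^1}(1)^{\oplus 2}$ is globally generated, so $H_0$ itself is base--point--free.
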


\begin{proof}
If $Y=\PP^m$, the result follows from 
Remark~\ref{remark.deformation.varieties} (2). 
Let now $Y$ be a projective bundle over $\PP^1$ and 
let $\varphi$ be the morphism induced by $|L|$. 
Let $Z$ be a smooth, algebraic variety with a distinguished point
$0 \in Z$.
Let $(\mathcal X, \mathcal L)$ be a flat family  
over $Z$ such that the fiber $(\mathcal X_0, \mathcal L_0)$ 
over $0$
is isomorphic to $(X,L)$. Recall that $H^1(L)=H^1(\pi_*L)$ and that, 
by the projection formula, the latter equals 
$H^1(\mathcal O_Y(1))
\oplus H^1(\mathcal E(1))$. We have 
$H^1(\mathcal O_Y(1))=0$;  
$H^1(\mathcal E(1))$ also vanishes because of the ampleness 
of $-K_X$, hence $H^1(L)=0$. Then, 
shrinking $Z$ if necessary, by semicontinuity  
$H^1(\mathcal L_z)=0$ for all $z \in Z$. 
Then, shrinking $Z$ if necessary, the push--out 
$(\mathcal X \to Z)_* (\mathcal L)$ is a free sheaf on $Z$ 
and the formation of $(\mathcal X \to Z)_{\ast}$ commute 
with base extension (see \cite[Chapitre III, \S 7]{EGA} 
or \cite[Chapter 0, \S 5]{MFK}). Thus $h^0(\mathcal L_z)$
is the same for all $z \in Z$, so
we obtain from $\mathcal L$  a $Z$--morphism
\begin{equation*}
 \Phi: \mathcal X \longrightarrow \PP^N_Z
\end{equation*}
such that $\Phi_0=\varphi$, i.e., $\Phi$ is a deformation of 
$\varphi$. 

\smallskip 

\noindent Now we apply Theorem~\ref{thm.Fano.deform} 
to $\Phi$. By Proposition~\ref{prop.FanoK3.classification} (3), 
the line bundle 
 $\mathcal E^{-2}$ satisfies condition (2) 
 of Theorem~\ref{thm.Fano.deform}. 
 Then 
Theorem~\ref{thm.Fano.deform}
 implies that, for all $z \in Z$, the morphism $\Phi_z$
 has degree $2$ onto 
 its image, which is a deformation of $i(Y)$ in $\PP^N$. 
Since 
 any deformation of $i(Y)$ is a variety of minimal degree, 
 the morphism $\Phi_z$ is induced by the complete linear
 series $H^0(\mathcal L_z)$ and $\mathcal L_z^m=L^m$, 
 we conclude that 
 $(\mathcal X_z, \mathcal L_z)$ is a hyperelliptic polarized 
 variety. 
 \end{proof}

 \begin{theorem}\label{thm.Fano-K3.deform.nonhyperelliptic}
 Let $X$ and $Y$ be as in 
notations \ref{setup} (2).   
 Let $(X,L)$ a hyperelliptic 
 Fano-K3 variety of dimension $m \geq 3$  
and 
 let $\varphi$ be the morphism induced by 
 the complete linear series $|L|$.
If $Y$ is a hyperquadric, then we have:
\begin{enumerate}    
 \item The morphism $\varphi$ and the polarized variety $(X,L)$
 are
 unobstructed; 
\item A general deformation of $\varphi$ is an embedding. 
Likewise, 
a general deformation of $(X,L)$ is nonhyperelliptic
but its complete linear series induces an embedding. 
The images of those embeddings are quartic hypersurfaces
 of $\mathbf P^{m+1}$.
\end{enumerate}
 \end{theorem}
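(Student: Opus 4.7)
The plan is to combine a cohomology calculation establishing the unobstructedness of $\varphi$ with an explicit geometric construction exhibiting a deformation of $\varphi$ whose general fiber is an embedding onto a smooth quartic hypersurface of $\PP^{m+1}$.

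By Proposition~\ref{prop.FanoK3.classification}(2) one has $\mathcal{E} = \mathcal{O}_Y(-2\mathfrak{h})$, $B \sim 4\mathfrak{h}$ and $L = \pi^*\mathcal{O}_Y(\mathfrak{h})$, so $i$ embeds $Y$ as the hyperquadric in $\PP^{m+1}$ and $\mathcal{I}/\mathcal{I}^2 \cong \mathcal{O}_Y(-2\mathfrak{h})$. First I would compute $H^1(\mathcal{N}_\varphi) = 0$ using the long exact sequence~\eqref{eq.normal.bundles.cohomology}. The relevant $\mathrm{Ext}^1$ groups are $H^1(\mathcal{O}_Y(2\mathfrak{h}))$ and $H^1(\mathcal{O}_Y)$, which both vanish for $m \geq 3$ via the defining sequence of the hyperquadric. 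For $H^1(\mathcal{N}_\pi)$, I would invoke the identification $\mathcal{N}_\pi \cong \mathcal{O}_B(B)$ from the proof of Theorem~\ref{Psi2=0.3} together with the sequence $0 \to \mathcal{O}_Y \to \mathcal{O}_Y(4\mathfrak{h}) \to \mathcal{O}_B(B) \to 0$, reducing to the vanishings of $H^1(\mathcal{O}_Y(4\mathfrak{h}))$ and $H^2(\mathcal{O}_Y)$. This yields $\varphi$ unobstructed. For $(X,L)$, one computes $H^2(\mathcal{O}_X) = H^2(\mathcal{O}_Y) \oplus H^2(\mathcal{O}_Y(-2\mathfrak{h})) = 0$ for $m \geq 3$, so Proposition~\ref{prop.algebraic.deformation} provides an algebraic formally semiuniversal deformation of $(X,L)$; since $H^1(L) = 0$, the forgetful functor from deformations of $\varphi$ to deformations of $(X,L)$ is smooth and transfers unobstructedness.

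Next I would construct an explicit one-parameter family. Write $Y = V(q)$ and choose $f \in H^0(\PP^{m+1}, \mathcal{O}(4))$ whose restriction to $Y$ cuts out $B$, chosen sufficiently general so that the generic member of the pencil spanned by $q^2$ and $f$ is smooth (using a Bertini argument together with the smoothness of the base locus $B$). Consider $\mathcal{W} = \{q^2 - t^2 f = 0\} \subset \PP^{m+1} \times \AAA^1$ and let $\widetilde{\mathcal{X}}$ be its strict transform under the blow-up of $Y \times \{0\}$. In the chart where $q = tu$, the total transform is $t^2(u^2 - f) = 0$, so the strict transform is $\{u^2 = f\}$. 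A fiberwise analysis then shows: $\widetilde{\mathcal{X}}_0$ sits inside the exceptional divisor $\PP(\mathcal{O}_Y(2\mathfrak{h}) \oplus \mathcal{O}_Y)$ as the smooth double cover of $Y$ branched on $B$, hence $\widetilde{\mathcal{X}}_0 \cong X$; for generic $t \neq 0$ the blow-up is an isomorphism away from the exceptional locus, so $\widetilde{\mathcal{X}}_t \cong \{q^2 = t^2 f\}$ is a smooth irreducible quartic. The composite $\Phi \colon \widetilde{\mathcal{X}} \to \PP^{m+1} \times \AAA^1 \to \PP^{m+1}$ restricts to $\varphi$ at $t = 0$ and to the inclusion of the quartic at generic $t$, and is flat over $\AAA^1$ by irreducibility of $\widetilde{\mathcal{X}}$.

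To conclude, since $\varphi$ is unobstructed, the base of its semiuniversal deformation is smooth and hence irreducible; the locus where the fiber is a closed embedding is open, and the previous construction exhibits a point in this locus, so it is dense. Thus a general deformation of $\varphi$ is a closed embedding whose image is a smooth quartic hypersurface of $\PP^{m+1}$, and the corresponding statement for $(X, L)$ follows from the equivalence of deformation theories. The main obstacle is the middle step: one must verify scheme-theoretically that the strict transform has $X$ as its central fiber, that the induced morphism to $\PP^{m+1}$ restricts to $\varphi$ on the central fiber, and that the generic quartic in the pencil is smooth; this requires careful local analysis in both charts of the blow-up of $Y \times \{0\}$ and the genericity argument alluded to above.
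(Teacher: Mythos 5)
Your proposal is correct, but for part (2) it takes a genuinely different route from the paper. Part (1) for $\varphi$ coincides with the paper's argument (the same vanishings $H^1(\mathcal O_Y)$, $H^1(\mathcal O_Y(2\mathfrak h))$, $H^1(\mathcal O_B(B))$ in the sequence \eqref{eq.normal.bundles.cohomology}); for $(X,L)$ you transfer unobstructedness through the forgetful map $\mathrm{Def}_\varphi \to \mathrm{Def}_{(X,L)}$, which is indeed smooth here since $H^1(L)=H^1(\mathcal O_Y(\mathfrak h))\oplus H^1(\mathcal O_Y(-\mathfrak h))=0$ (lift the defining sections across a small extension), whereas the paper instead proves $H^2(\Sigma_L)=0$ via the Atiyah extension and $H^2(\mathcal T_X)=0$; both are valid, yours is shorter, the paper's is the one that recycles in the $L^m=2g$ theorems. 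For part (2) the paper runs the infinitesimal smoothing machinery: Corollary~\ref{cor.Fano.Hom} gives $\dim\mathrm{Hom}(\mathcal I/\mathcal I^2,\mathcal E)=1$ with any nonzero element an isomorphism, surjectivity of $\Psi_2$ follows from $H^1(\mathcal N_\pi)=0$, and \cite[Theorem 1.5]{smoothing} produces a curve in the semiuniversal base along which $\varphi$ deforms to an embedding; you instead build the smoothing explicitly as the classical degeneration of quartic hypersurfaces to the double quadric, via the pencil $q^2-t^2f$ and the strict transform under the blow-up of $Y\times\{0\}$. Your construction does work: since $B=V(q,f)$ is smooth, $\nabla f\neq 0$ along $B$, so every member $\lambda q^2+\mu f$ with $\mu\neq 0$ is smooth along the base locus and Bertini handles the rest (so in fact any $f$ restricting to $B$ works, no extra genericity is needed); the chart computation $q=tu$, strict transform $\{u^2=f\}$, does give central fiber the double cover of $Y$ branched along $B$, which is $X$ because $\mathrm{Pic}(Y)=\ZZ\mathfrak h$ forces the square root of $\mathcal O_Y(B)$ to be unique; flatness should be justified by the strict transform being the closure of the family over $t\neq 0$ (or by Cohen--Macaulayness of a hypersurface in the smooth blow-up with no component over $t=0$), irreducibility alone is not quite the right reason. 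The trade-off: your argument is elementary and gives the concrete geometric picture (double quadrics as limits of quartics), but it exploits the special fact that $i(Y)$ is a hypersurface and the target model is a quartic, so it does not extend to the scroll targets treated by the same machinery elsewhere in the paper; and note that you still need the unobstructedness of $\varphi$, $H^2(\mathcal O_X)=0$ and Proposition~\ref{prop.algebraic.deformation} (plus the \'etale-base-change comparison with the algebraic formally semiuniversal family and openness of the embedding/very-ampleness locus) to convert your single one-parameter family into the statement about a \emph{general} deformation of $\varphi$ and of $(X,L)$ — exactly the final step the paper performs.
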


\begin{proof} 
 If $Y$ is a hyperquadric, then 
$\mathcal E = \mathcal O_Y(-2\mathfrak{h})$ by 
Proposition~\ref{prop.FanoK3.classification} (2) 
and $\mathcal O_Y(1)=
\mathcal O_Y(\mathfrak{h})$. Since $H^2(\mathcal O_Y)=0$ 
and
$H^2(\mathcal E)=0$ by \eqref{eq.H^2.E.hyperquadric}, then 
$H^2(\mathcal O_X)=0$,
so, by Proposition~\ref{prop.algebraic.deformation}, 
there exist
algebraic formally semiuniversal deformations 
 of
$\varphi$ and $(X,L)$.

\smallskip
\noindent
If follows from \cite[(2.11)]{deformation} that 
$H^1(\mathcal N_\pi)$ is isomorphic to $H^1(\mathcal O_B(B))$.   
To compute $H^1(\mathcal O_B(B))$ we consider the  sequence
\begin{equation}\label{eq.compute.H1.Npi}
H^1(\mathcal O_Y(B)) 
\longrightarrow H^1(\mathcal O_B(B)) 
\longrightarrow H^2(\mathcal O_Y).
\end{equation}
Note that, by the Kodaira vanishing theorem, 
$H^1(\mathcal O_Y(B))=H^1(\mathcal O_Y(4\mathfrak{h}))=0$ and
$H^2(\mathcal O_Y)=0$, so  $H^1(\mathcal O_B(B))$ and 
$H^1(\mathcal N_\pi)$
vanish. 
From 
\eqref{eq.normal.bundles.cohomology} and 
from the vanishing of $H^1(\mathcal O_Y)$ and 
$H^1(\mathcal O_Y(2\mathfrak{h}))$, it follows
that 
 $H^1(\mathcal N_\varphi)=0$,
so $\varphi$ 
is unobstructed.
Now 
let $\Sigma_L$ be the sheaf of first order differential 
 operators in $L$ 
and let  
\begin{equation}\label{Atiyah.L}
0 \longrightarrow \SO_X \longrightarrow \Sigma_{L}
\longrightarrow \Cal{T}_{X}\longrightarrow 0
\end{equation}
be the Atiyah extension of $L$ 
(see  \cite[p. 96]{HarrisMorrison} or 
\cite[(3.30)]{Sernesi}). 
By \cite[Theorem 3.3.11 (ii)]{Sernesi},  
if $H^2(\Sigma_L)=0$, then $(X,L)$ is unobstructed. To prove
the vanishing of $H^2(\Sigma_L)$, 
by exact sequence  \eqref{Atiyah.L} 
and the vanishing of $H^2(\mathcal O_X)$, we only need to show 
$H^2(\mathcal T_X)=0$. For that 
we take cohomology on the exact sequence
\begin{equation}\label{eq.presentation.conormal.bundle}
 0 \longrightarrow \mathcal T_X \longrightarrow 
 \varphi^*\mathcal T_{\PP^N} \longrightarrow 
 \mathcal N_\varphi \longrightarrow 0. 
\end{equation}
By the vanishing of $H^1(\mathcal N_\varphi)$, 
the projection formula 
and the Leray's spectral sequence, it is enough 
to show the vanishings of 
$H^2(\mathcal T_{\PP^N}|_{i(Y)})$ and 
$H^2(\mathcal T_{\PP^N}|_{i(Y)} \otimes \mathcal E)$. 
For that we use the restriction to $i(Y)$ of the 
Euler sequence of the tangent bundle of $\PP^N$, so the
wanted 
vanishings will follow from the vanishings
of $H^2(\mathcal O_Y(\mathfrak h))$, 
$H^3(\mathcal O_Y)$, 
$H^2(\mathcal O_Y(-\mathfrak h))$ and 
$H^3(\mathcal O_Y(-2\mathfrak h))$. All those vanishings
follow from the vanishing of higher cohomology of line bundles
of $\PP^{m+1}$ (recall that $m \geq 3$). 
Therefore $H^2(\mathcal T_X)=0$ and so  $H^2(\Sigma_L)=0$
and $(X,L)$ is unobstructed. This completes the 
proof of (1).

\smallskip
\noindent
To prove (2) 
we are going to use \cite[Theorem 1.5]{smoothing}. 
By 
Corollary~\ref{cor.Fano.Hom}, 
$\mathrm{Hom}(\mathcal I/\mathcal I^2, \mathcal E)$ has 
dimension $1$. In fact, since 
$\mathcal I/\mathcal I^2=\mathcal E$, any nonzero element of 
$\mathrm{Hom}(\mathcal I/\mathcal I^2, \mathcal E)$ is an isomorphism. 
Recall the homomorphism $\Psi_2$, 
introduced in Proposition~\ref{morphism.miguel}, and cohomology 
sequence~\eqref{eq.normal.bundles.cohomology}. 
  Since $H^1(\mathcal N_\pi)$
vanishes,  
the homomorphism $\Psi_2$ 
 is surjective. Then,  given  a nonzero $\mu
 \in \mathrm{Hom}(\mathcal I/\mathcal I^2,
\mathcal E)$, there exists  
$\nu$ in 
$H^0(\mathcal N_\varphi)$ 
 such that $\Psi_2(\nu)=\mu$. 
Since there is an algebraic formally semiuniversal deformation 
 of $\varphi$ with base $Z$ and $\varphi$ is unobstructed, we see
that all the hypotheses 
of \cite[Theorem 1.5]{smoothing} are satisfied, 
so \cite[Theorem 1.5]{smoothing} and 
its proof imply that there exists a smooth, 
algebraic curve $T$ in $Z$, passing through 
 $0$ and tangent to the tangent vector  $v$ of $Z$ 
 corresponding to $\nu$ (recall that 
 $H^0(\mathcal N_\varphi)$ is isomorphic to the
 tangent space of $Z$ at $0$)
 and a deformation $\Phi_T$ of $\varphi$ 
 over $T$ 
 such that $\Phi_0=\varphi$ and 
 $\Phi_t$ is an embedding for all $t \in T, 
 t \neq 0$.
Taking the pullback by $\Phi_T$
 of $\mathcal O_{\PP^N_T}(1)$ gives us a 
 deformation $(\mathcal X_T, \mathcal L_T)$
 of $(X,L)$. Since 
 for all $t \in T \smallsetminus \{0\}$, $\Phi_t$ is 
 an embedding, then 
 $(\mathcal X_t, \mathcal L_t)$ is
 non hyperelliptic. Let $Z'$ be the base of an
 algebraic formally semiuniversal deformation 
 $(\mathcal X, \mathcal L)$
 of
 $(X,L)$.  Then, after shrinking $T$ if necessary,
 $(\mathcal X_T, \mathcal L_T)$ is obtained from 
 $(\mathcal X, \mathcal L)$
 by  
 etale base change, so 
 there is a point $z' \in Z'$ such that 
 $(\mathcal X_{z'}, \mathcal L_{z'})$ is nonhyperelliptic 
 but $\mathcal L_{z'}$ is very ample. 
Since very ampleness is an open condition, this 
finishes the proof of (2).
\end{proof}

\noindent 
Deformations of finite morphisms 
 of degree $2$ to embeddings 
 are linked to the existence of \emph{smoothable} 
   embedded ribbons (see e.g. \cite{Fong}, \cite{K3carpets}, 
   \cite{Gonzalez}, \cite{ropes}, 
   \cite{Enriques}, \cite{smoothing} or \cite{MR}). The next is another example
   of this phenomenon.

\begin{example}\label{example.smoothable.ribbon}
 {\rm Let $m \geq 3$. 
Given a hyperquadric $i(Y)$ in $\PP^{m+1}$, the dualizing
sheaf of 
the (unique) ribbon structure $\widetilde Y$ on $i(Y)$ 
embedded in $\PP^{m+1}$ 
is $\omega_{\widetilde Y}=\mathcal O_{\widetilde Y}(-m+2)$.
In addition, since $\widetilde Y$ is a divisor in 
$\PP^{m+1}$, it is arithmetically Cohen-Macaulay. Thus, 
we can say that $\widetilde Y$ is a Fano-K3 ribbon.

\smallskip
\noindent 
The ribbon $\widetilde Y$ can be \emph{smoothed}
inside $\PP^{m+1}$, i.e., there exists an embedded deformation of 
$\widetilde Y$ whose general fiber is a smooth (Fano-K3) variety
in $\PP^{m+1}$. This follows from the general theory developed 
in \cite[\S 1]{smoothing} but, in this case, can be 
achieved in an ad--hoc, more straight forward fashion,
by deforming $\widetilde Y$ in the linear
system of degree $4$ divisors on $\PP^{m+1}$.}
\end{example}

\begin{remark}
 {\rm In \cite[(7.2)]{Iskovskih} 
 Iskovskih classified  hyperelliptic $(X,L)$ 
 Fano--K3 threefolds. They are of five types and their 
 sectional genera  are 
 $g=2, 3, 4, 5$ and $7$ (compare with
 Proposition~\ref{prop.FanoK3.classification}). 
Iskovskih also proved 
 (see \cite[Theorem 3]{Debarre}) that, 
 if $X$ is a prime Fano threefold of genus $g$ and $g  \geq 4$, 
 then $-K_X$ is very ample.
 Proposition~\ref{prop.FanoK3.classification} and 
 Theorem~\ref{thm.Fano-K3.deform.hyperelliptic}
 say that
 hyperelliptic Fano--K3 threefolds with $g \geq 4$ deform only to  
 hyperelliptic Fano--K3 threefolds, 
so  
 for each 
genus $4$, $5$ and $7$ 
there are at least two kinds of Fano--K3 threefolds, hyperelliptic and 
anticanonically embedded, that do not deform to each other.}
\end{remark}

\section{Deformations of hyperelliptic 
polarized varieties with $L^m=2g$.}\label{Section.2g}

\begin{notation}\label{notation.Y.scroll}
 {\rm If $Y$ is  a projective bundle  
 embedded by $i$ as a rational normal scroll, then
 \begin{enumerate}
  \item[(i)] $E$ is the very ample vector bundle such that 
 $Y=\PP(E)$ and $i$ is induced by 
 $|\mathcal O_{\PP(E)}(1)|$; 
 \item[(ii)] 
 $E=\mathcal O_{\PP^1}(a_1) \oplus \cdots \oplus 
\mathcal O_{\PP^1}(a_n)$ with 
$a_1 \geq \cdots  \geq a_n > 0$ and denote
  $a=a_1+\cdots + a_n$ (thus 
 $i(Y)=S(a_1,\dots,a_n)$ and $N={a+n-1}$); 
 \item[(iii)]   $H$ will be the divisor 
corresponding 
to $\mathcal O_{\PP(E)}(1)$; 
\item[(iv)]  $p$ will be the structure morphism 
from $Y$ to $\PP^1$ and  
$F$ will be the fiber of $p$.
 \end{enumerate}}
\end{notation}

\begin{proposition}
 \label{prop.2g.classification}
Let $X$, $Y$, $\mathcal E$, $i$ and $\varphi$ be 
as in Notation~\ref{setup}, \ref{notation.Y}, 
\eqref{notation.Y.Fano} and 
\ref{notation.Y.scroll}. Assume $(X,L)$ is 
a hyperelliptic  variety of dimension 
$m \geq 2$ with $L^m=2g$ and  $\varphi$ is induced 
by $|L|$. Then $i(Y)$ is a variety of minimal degree and 
$Y$, $\mathcal E^{-2}$ and $B$ are as follows:
\begin{enumerate}
 \item If $Y=\PP^m$, then $\mathcal E^{-2}=
 \mathcal O_{\PP^m}(4)$ ($g=1$).
 \item If $Y$ is a projective bundle over $\PP^1$, 
 then 
 \begin{enumerate}
  \item  $B \sim 2H+2F$; or
  \item $m=3$, $a_1=a_2=a_3=1$ and 
  $B \sim 4H-4F$ (i.e., $B \sim 4H_0$); or 
  \item $m=2$, $a_1=a_2$ and $B \sim 4H - (4a_1-2)F$
  (i.e., $B \sim 4H_0 +2F$); or
  \item $m=2$, 
  $a_1=a_2+1$ and $B \sim 4H-4a_2F$ 
  (i.e., $B \sim 4H_0 + 4F$);  or
  \item $m=2$, 
  $a_1=a_2+2$ and $B \sim 4H-(4a_2+2)F$
  (i.e., $B \sim 4H_0+6F$). 
 \end{enumerate}
\end{enumerate}
\end{proposition}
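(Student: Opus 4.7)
The plan is to reduce the classification to an intersection-theoretic constraint on the branch divisor $B$ and then enumerate the possibilities for $Y$ case by case. Since $(X,L)$ is hyperelliptic, $\varphi=i\circ\pi$ factors with $\pi$ of degree $2$ and $i(Y)$ a variety of minimal degree; writing $H$ for the class of $\SO_Y(1)$, one has $L=\pi^\ast H$, so $L^m=2H^m$ and thus $H^m=g$. Using $\omega_X=\pi^\ast(\omega_Y\otimes\SE^{-1})$ together with $\pi_\ast R=B$ and $\pi^\ast B=2R$ (where $R$ is the ramification divisor), the projection formula gives $K_X\cdot L^{m-1}=2K_Y\cdot H^{m-1}+B\cdot H^{m-1}$. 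Since $(Y,H)$ has sectional genus $0$ by minimality of the degree, $K_Y\cdot H^{m-1}=-2-(m-1)g$; substituting into $2g-2=(K_X+(m-1)L)\cdot L^{m-1}$ and using $L^m=2g$ yields the key equality
\begin{equation*}
B\cdot H^{m-1}=2g+2.
\end{equation*}

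The classification of smooth varieties of minimal degree now leaves four possibilities for $Y$: $\PP^m$, the Veronese surface, a smooth hyperquadric of dimension $m\geq 3$, or a smooth rational normal scroll. For the Veronese $Y\simeq\PP^2$ with $H=2\ell$, one gets $g=4$ and $B\cdot H=10$, forcing $B\sim 5\ell$, which is not divisible by $2$ in $\mathrm{Pic}(\PP^2)$, contradicting $\SE^{-2}=\SO_Y(B)$. For a smooth hyperquadric of dimension $\geq 3$, $\mathrm{Pic}(Y)=\ZZ H$, $g=H^m=2$, so $B\sim 3H$, again not divisible by $2$. For $Y=\PP^m$, one has $g=1$ and $B\sim 4H$, giving $\SE^{-2}=\SO_{\PP^m}(4)$, which is case (1).

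Finally, for $Y$ a smooth rational normal scroll, $\mathrm{Pic}(Y)=\ZZ H\oplus\ZZ F$ with $H^m=a=g$, $H^{m-1}\cdot F=1$ and $F^2=0$. Writing $B\sim\alpha H+\beta F$, divisibility by $2$ in $\mathrm{Pic}(Y)$ forces $\alpha=2\alpha'$, $\beta=2\beta'$, and the constraint becomes $\beta'=1+(1-\alpha')a$. For $\alpha'=0$ one has $B\sim 2(a+1)F$, which is pulled back from $|\SO_{\PP^1}(2(a+1))|$ and so is a union of fibers, never smooth and irreducible. For $\alpha'=1$ one gets $B\sim 2H+2F$, ample with a smooth irreducible member by Bertini, yielding case (2)(a). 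For $\alpha'=2$, $B\sim 4H+(2-2a)F$; computing $p_\ast\SO_Y(B)$ as a twisted symmetric power of $E$ on $\PP^1$, then analyzing the base locus of $|B|$ and applying Bertini, shows that smooth irreducible members exist exactly in the four cases (2)(b)--(e), after rewriting via $H_0=H-a_1F$. For $\alpha'\geq 3$, $\beta$ is so negative that $|B|$ acquires a fixed component, forcing every effective divisor to be reducible. The main obstacle will be the case $\alpha'=2$: one must determine which summands of the symmetric power contribute sections and verify that precisely the listed shapes of $(m,a_1,\ldots,a_n)$ admit smooth irreducible members of $|B|$.
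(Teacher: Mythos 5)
Your opening reduction is correct and is a genuinely different route from the paper, whose entire proof is a citation of Fujita's classification: the identities $H^m=g$, $K_Y\cdot H^{m-1}=-2-(m-1)g$ (sectional genus $0$ of a variety of minimal degree) and hence $B\cdot H^{m-1}=2g+2$ are right, the parity argument does eliminate the Veronese surface and the hyperquadric, it recovers case (1), and on scrolls it gives the constraint $\beta'=1+(1-\alpha')a$. So the numerical skeleton of a direct proof is there.

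The gap is in how you decide which numerical classes survive, and it is not cosmetic. Smoothness of $X$ requires only that $B$ be smooth, not irreducible, and your criterion ``smooth irreducible members exist'' is wrong in both directions. It would exclude the listed case (2e): on $\FF_2$ one has $(4H_0+6F)\cdot H_0=-2$, so $H_0$ is a fixed component and every smooth member of $|4H_0+6F|$ is of the form $H_0\sqcup D$ with $D\in|3(H_0+2F)|$ general; there are no irreducible smooth members at all, yet this case must be kept. Conversely it does not exclude $\alpha'=0$: the general member of $|(2a+2)F|$ is $2a+2$ disjoint fibres, which is smooth, and the associated double cover is the smooth, irreducible variety $\PP(\sigma^{*}E)$, where $\sigma\colon C\to\PP^1$ is the genus-$a$ hyperelliptic curve branched at those $2a+2$ points; here $L=\pi^{*}H$ is ample, $h^0(\SE(1))=h^0(\PP^1,E(-a-1))=0$ so $\varphi$ is given by the complete series $|L|$, and $L^m=2a=2g$. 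So ``never smooth and irreducible'' is simply false, and this vertical case (a scroll over a hyperelliptic curve) cannot be discarded by the argument you propose; whether Fujita's finer structure theory sets such covers aside or they must be accounted for separately, handling them is precisely what the citation to Fujita in the paper's proof supplies and what your proposal lacks. The same phenomenon undercuts your $\alpha'\geq 3$ dismissal (on $\PP^1\times\PP^1$, $6H-6F$ is six fibres of the other ruling, again smooth), where in any case ``reducible'' is not a valid ground for exclusion. Finally, the case $\alpha'=2$, which is where all of (2b)--(2e) and the restrictions on $(m,a_1,\dots,a_n)$ actually come from, is explicitly left as an ``obstacle'' rather than carried out. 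As written, the proposal establishes the numerical constraints but not the classification.
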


\begin{proof}
 The result follows from \cite[Proposition 5.18 (2), (6.7)]{Fujita}. 
\end{proof}

\noindent We need  to carry out several 
cohomological computations on the varieties $X$ and $Y$ of
Proposition~\ref{prop.2g.classification}: 

\begin{proposition}\label{prop.2g.hom.not.0}
Let $X$, $Y$, $\mathcal E$, $i$, $\pi$  and $\varphi$ be 
as in Notation~\ref{setup}, \ref{notation.Y} and 
\ref{notation.Y.scroll}. Let $L=\varphi^*\mathcal O_Y(1)$.
Assume furthermore that 
$Y$ and $B$ are  as in 
Proposition~\ref{prop.2g.classification} (2a), (2c), (2d) or
(2e). 
Then $\varphi$ is induced by the complete linear series 
$|L|$,  the groups $\mathrm{Hom}
  (\mathcal I/\mathcal I^2, \mathcal E)$ and $\mathrm{Ext}^1
  (\Omega_Y, \mathcal E)$ are isomorphic,  and 
the following holds: 
 \begin{enumerate}
  \item  If $Y$ and $B$ are as in 
  Proposition~\ref{prop.2g.classification} (2a), then 
  the dimension of $\mathrm{Hom}
  (\mathcal I/\mathcal I^2, \mathcal E)$ is 
$g$;  otherwise, the dimension of $\mathrm{Hom}
  (\mathcal I/\mathcal I^2, \mathcal E)$ is
  $2$.
  \item $h^2(\mathcal O_X)=0$.
  \item $h^1(\mathcal N_\pi)=0$, except if $Y$ and $B$ are as in 
  Proposition~\ref{prop.2g.classification} (2e); in this
  case, $h^1(\mathcal N_\pi)=1$. 
\item $h^1(\mathcal N_\varphi)=0$; except if $Y$ and $B$ are as in 
  Proposition~\ref{prop.2g.classification} (2e); in this
  case, $h^1(\mathcal N_\varphi)=0$ or $1$.
 \end{enumerate}
\end{proposition}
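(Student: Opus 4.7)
The strategy is a case-by-case analysis following the classification in Proposition~\ref{prop.2g.classification}, in each case reducing every cohomology group in the statement to cohomology of line bundles on the projective bundle $p: Y = \PP(E) \to \PP^1$ and then, by Leray and the projection formula, to cohomology of line bundles on $\PP^1$. Throughout, the class of $\mathcal E$ is determined by that of $B$ via $\mathcal E^{-2} = \mathcal O_Y(B)$, and $\mathcal O_Y(1) = \mathcal O_{\PP(E)}(1) = \mathcal O_Y(H)$.

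For the two preliminary assertions: since $\pi_*\mathcal O_X = \mathcal O_Y \oplus \mathcal E$, the projection formula gives $H^0(L) = H^0(\mathcal O_Y(1)) \oplus H^0(\mathcal E(1))$, with $\varphi^*H^0(\mathcal O_{\PP^N}(1))$ landing in the first summand; so it suffices to verify $H^0(\mathcal E(1)) = 0$ in each case, a routine non-effectivity check from the explicit class of $B$. The isomorphism $\mathrm{Hom}(\mathcal I/\mathcal I^2, \mathcal E) \cong \mathrm{Ext}^1(\Omega_Y, \mathcal E)$ follows by applying $\mathrm{Hom}(-, \mathcal E)$ to the conormal sequence of $i(Y)$ in $\PP^N$, provided $H^0(\mathcal T_{\PP^N}|_{i(Y)} \otimes \mathcal E) = H^1(\mathcal T_{\PP^N}|_{i(Y)} \otimes \mathcal E) = 0$; these two vanishings reduce, via the restricted Euler sequence twisted by $\mathcal E$, to $H^i(\mathcal E(1)) = 0$ for $i = 0, 1$ and $H^i(\mathcal E) = 0$ for $i = 1, 2$.

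For (1), I identify $\mathrm{Ext}^1(\Omega_Y, \mathcal E)$ with $H^1(\mathcal T_Y \otimes \mathcal E)$ and compute this via the relative tangent and relative Euler sequences
\begin{equation*}
 0 \to \mathcal T_{Y/\PP^1} \to \mathcal T_Y \to p^*\mathcal T_{\PP^1} \to 0, \qquad 0 \to \mathcal O_Y \to p^*E^\vee(H) \to \mathcal T_{Y/\PP^1} \to 0,
\end{equation*}
as in the proof of Proposition~\ref{theorem.Fano}. For (2), the projection formula gives $h^2(\mathcal O_X) = h^2(\mathcal O_Y) + h^2(\mathcal E)$, and both vanish for these $Y$ and $\mathcal E$. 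For (3), \cite[(2.11)]{deformation} yields $H^1(\mathcal N_\pi) \cong H^1(\mathcal O_B(B))$, and the exact sequence
\begin{equation*}
 H^1(\mathcal O_Y(B)) \to H^1(\mathcal O_B(B)) \to H^2(\mathcal O_Y) = 0
\end{equation*}
reduces (3) to computing $h^1(\mathcal O_Y(B))$ by $p_*$ plus Leray: this vanishes in (2a), (2c), (2d) and equals $1$ in (2e). For (4), I plug (1), (3), and the computations of $\mathrm{Ext}^1(\mathcal I/\mathcal I^2, \mathcal O_Y) = H^1(\mathcal N_{Y/\PP^N})$ and $\mathrm{Ext}^1(\mathcal I/\mathcal I^2, \mathcal E) = H^1(\mathcal N_{Y/\PP^N} \otimes \mathcal E)$, both of which I show vanish via the normal sequence of $Y$ in $\PP^N$ and the unobstructedness of scrolls, into the long exact sequence~\eqref{eq.normal.bundles.cohomology}.

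The hard part is case (2e), where $h^1(\mathcal N_\pi) = 1$ and so the map $H^1(\mathcal N_\pi) \to H^1(\mathcal N_\varphi)$ in~\eqref{eq.normal.bundles.cohomology} might be injective; the best bound available from this diagram alone is then $h^1(\mathcal N_\varphi) \leq 1$, which is exactly the conclusion allowed by the statement and is precisely the ambiguity flagged in the introduction as the added difficulty of case (2e). The other cases (2a), (2c), (2d) drop out mechanically once the line bundle cohomology on $Y$ has been organized via the two projective bundle sequences above.
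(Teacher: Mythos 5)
Your proposal is correct and follows essentially the same route as the paper's proof: the same conormal-plus-restricted-Euler-sequence argument for the isomorphism $\mathrm{Hom}(\mathcal I/\mathcal I^2,\mathcal E)\cong\mathrm{Ext}^1(\Omega_Y,\mathcal E)$ (reduced to $H^0(\mathcal E(1))=H^1(\mathcal E(1))=H^1(\mathcal E)=H^2(\mathcal E)=0$), the relative tangent and relative Euler sequences for $h^1(\mathcal T_Y\otimes\mathcal E)$, the projection formula for $h^2(\mathcal O_X)$, the identification $H^1(\mathcal N_\pi)\cong H^1(\mathcal O_B(B))$ via \cite[(2.11)]{deformation}, and the long exact sequence~\eqref{eq.normal.bundles.cohomology} for (4), including the observation that in case (2e) one can only conclude $h^1(\mathcal N_\varphi)\leq 1$. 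The one microscopic point to add: in part (3), to get the exact value $h^1(\mathcal N_\pi)=1$ in case (2e) rather than just an upper bound, include the term $H^1(\mathcal O_Y)=0$ on the left of your displayed sequence, so that $H^1(\mathcal O_Y(B))\to H^1(\mathcal O_B(B))$ is injective as well as surjective.
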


\begin{proof}
We first see that $\varphi$ is induced by the complete linear
series $|L|$. This is equivalent to
\begin{equation}\label{eq.vanishing.for.complete.linear.series}
 H^0(\mathcal E(1))=0,
\end{equation}
so we check this in each case of 
Proposition~\ref{prop.2g.classification} (2a), (2c), (2d) or
(2e).
Indeed, if $Y$ and $B$ are as in 
Proposition~\ref{prop.2g.classification} (2a), (2c), (2d) or 
(2e), 
then $\mathcal E(1)$ equals   
\begin{equation}\label{eq.E(1)}
 \mathcal O_Y(-F), \,\mathcal O_Y(-H+(2a_1-1)F), 
\,\mathcal O_Y(-H+2a_2F) \ \mathrm{and} \ 
\mathcal O_Y(-H+(2a_2+1)F)
\end{equation}
respectively, and none of these line bundles have 
global sections.

\smallskip
\noindent
Now we prove $\mathrm{Hom}
  (\mathcal I/\mathcal I^2, \mathcal E)$ and $\mathrm{Ext}^1
  (\Omega_Y, \mathcal E)$ are isomorphic.
  For this we will prove 
that the connecting homomorphism $\gamma$ of
\eqref{eq.cor.Fano}
 is an isomorphism in this case and
then we will compute 
the dimension of $\mathrm{Ext}^1(\Omega_Y,\mathcal E)$, i.e.,  
$h^1(\mathcal T_Y \otimes \mathcal E)$.
To prove $\gamma$ is an isomorphism, we will show
\begin{equation}\label{eq.Euler.vanishings}
\mathrm{Hom}(\Omega_{\mathbf P^N}|_{i(Y)},\mathcal E)=
\mathrm{Ext}^1(\Omega_{\mathbf P^N}|_{i(Y)},\mathcal E)=0.
\end{equation}
 For that, we consider the restriction to $i(Y)$ of the Euler 
sequence of $\PP^N$ and get
\begin{equation*}
 \mathrm{Hom}(\mathcal O_{i(Y)}^{N+1}(-1),\mathcal E) 
 \longrightarrow \mathrm{Hom}(\Omega_{\mathbf P^N}|_{i(Y)},\mathcal E) 
 \longrightarrow \mathrm{Ext}^1(\mathcal O_Y,\mathcal E)
 \end{equation*}
 and 
 \begin{equation*}
 \mathrm{Ext}^1(\mathcal O_{i(Y)}^{N+1}(-1),\mathcal E) 
 \longrightarrow \mathrm{Ext}^1(\Omega_{\mathbf P^N}|_{i(Y)},\mathcal E) 
 \longrightarrow \mathrm{Ext}^2(\mathcal O_Y,\mathcal E).
 \end{equation*}
 It suffices to see that $\mathrm{Hom}(\mathcal O_{i(Y)}^{N+1}(-1),\mathcal E)$, 
 $\mathrm{Ext}^1(\mathcal O_{i(Y)}^{N+1}(-1),\mathcal E)$, 
 $\mathrm{Ext}^1(\mathcal O_Y,\mathcal E)$ and 
 $\mathrm{Ext}^2(\mathcal O_Y,\mathcal E)$ all vanish. 
On the one hand, 
$\mathrm{Hom}(\mathcal O_{i(Y)}^{N+1}(-1),\mathcal E)$ 
vanishes because of 
\eqref{eq.vanishing.for.complete.linear.series} 
and $\mathrm{Ext}^1(\mathcal O_{i(Y)}^{N+1}(-1),
\mathcal E)$ vanishes because none of the line bundles
of \eqref{eq.E(1)} has higher cohomology.
 On the other hand, if $Y$ and $B$ are as in 
 Proposition~\ref{prop.2g.classification} (2a), then 
 $\mathrm{Ext}^1(\mathcal O_Y,\mathcal E)$ and 
 $\mathrm{Ext}^2(\mathcal O_Y,\mathcal E)$ are isomorphic
 to $H^1(\mathcal O_Y(-H-F))$ and $H^2(\mathcal O_Y(-H-F))$ 
 and both also vanish. 
 If $Y$ is as in the other three cases, 
 $\mathrm{Ext}^1(\mathcal O_Y,\mathcal E)$ and 
 $\mathrm{Ext}^2(\mathcal O_Y,\mathcal E)$ are, 
 respectively, Serre dual of $H^1(\mathcal O_Y(-F))$ and 
 $H^0(\mathcal O_Y(-F))$ and both cohomology groups vanish. 
 This completes the proof of $\gamma$ being an isomorphism. 
 
 \smallskip
 
 \noindent Now we prove (1). We  
 compute the dimension $\mathrm{Ext}^1(\Omega_Y,\mathcal E)$, 
 which is isomorphic 
 to $H^1(\mathcal T_Y \otimes \mathcal E)$. 
In order to compute the dimension $H^1(\mathcal T_Y \otimes \mathcal E)$ 
 we consider the exact sequence \eqref{eq1}.
  If $Y$ and $B$ are as in Proposition~\ref{prop.2g.classification} (2a), 
  then $p^*\mathcal T_{\PP^1} \otimes \mathcal E=\mathcal O_Y(-H+F)$, 
both $H^0(p^*\mathcal T_{\PP^1} \otimes \mathcal E)$ and 
$H^1(p^*\mathcal T_{\PP^1} \otimes \mathcal E)$ vanish.
Then $H^1(\mathcal T_Y \otimes \mathcal E)$ is isomorphic to 
$H^1(\mathcal T_{Y/\PP^1} \otimes \mathcal E)$. 
In order to compute $H^1(\mathcal T_{Y/\PP^1} \otimes \mathcal E)$ 
 we consider the dual of the relative Euler sequence
\begin{equation}\label{eq4}
 0 \longrightarrow \mathcal O_Y \longrightarrow p^*E^\vee \otimes \mathcal O_Y(H) 
 \longrightarrow 
 \mathcal T_{Y/\PP^1} \longrightarrow 0. 
\end{equation}
Since $H^1(\mathcal O_Y(-H-F))=H^2(\mathcal O_Y(-H-F))=0$,
$H^1(\mathcal T_{Y/\PP^1} \otimes \mathcal E)$ is 
isomorphic to 
$H^1(p^*E^\vee\otimes \mathcal O_Y(H) \otimes \mathcal E)$, 
which is the same 
as
\begin{equation*}
 H^1(\mathcal O_Y(-(a_1+1)F)) \oplus \cdots  \oplus  H^1(\mathcal O_Y(-(a_n+1)F)), 
\end{equation*}
which has dimension $a_1 + \cdots + a_n=a=\frac{1}{2}L^m=g$. 
Then, if follows from all the above that
the dimension of Hom$(\mathcal I/\mathcal I^2,\mathcal E)$ 
is $g$ if $Y$ and $B$ are as in 
Proposition~\ref{prop.2g.classification} (2a). 

\smallskip 
\noindent Now we compute $h^1(\mathcal T_Y \otimes \mathcal E)$
when $Y$ and $B$ are as in Proposition~\ref{prop.2g.classification} (2c), 
(2d) or (2e). Since in this case $Y$ has dimension $2$,
with notation~\eqref{notation.Y.Fano}, exact sequence
\eqref{eq1} becomes
\begin{equation}\label{eq.tangent.bundle.Hirzebruch}
  0 \longrightarrow \mathcal O_Y(2H_0 + eF) \longrightarrow 
  \mathcal T_Y 
  \longrightarrow \mathcal O_Y(2F) \longrightarrow 0, 
  \end{equation}
  where $e=a_1-a_2$, i.e., $e=0$ in case (2c), 
$e=1$ in case (2d), $e=2$ in case (2e).
Tensoring with $\mathcal E$, \eqref{eq.tangent.bundle.Hirzebruch}
becomes 
\begin{equation}\label{eq.tangent.bundle.Hirzebruch.twisted}
  0 \longrightarrow \mathcal O_Y(-F) \longrightarrow 
  \mathcal T_Y \otimes \mathcal E
  \longrightarrow \mathcal O_Y(-2H_0+(1-e)F) \longrightarrow 0. 
  \end{equation}
 Note that $H^1(\mathcal O_Y(-F))=
  H^2(\mathcal O_Y(-F))=0$,
  so that $h^1(\mathcal T_Y \otimes \mathcal E)=
  h^1(\mathcal O_Y(-2H_0+(1-e)F))$. 
  By Serre duality, $h^1(\mathcal O_Y(-2H_0+(1-e)F))
  =h^1(\mathcal O_Y(-3F)=2$, 
so $h^1(\mathcal T_Y \otimes \mathcal E)$ and hence, 
the dimension of Hom$(\mathcal I/\mathcal I^2,\mathcal E)$,   
is $2$ if $Y$ and $B$ are as in 
Proposition~\ref{prop.2g.classification} (2c), 
(2d) or (2e).

\smallskip
\noindent Now we prove (2). By the projection formula and by 
the Leray spectral sequence, 
\begin{equation*}
 H^2(\mathcal O_X)= H^2(\mathcal O_Y) \oplus H^2(\mathcal E).
\end{equation*}
It is well known $H^2(\mathcal O_Y)=0$. 
If $Y$ and $B$ are as in Proposition~\ref{prop.2g.classification} (2a), 
then   
\begin{equation*}
 H^2(\mathcal E)=H^2(\mathcal O_Y(-H-F))=0.
\end{equation*}
If $Y$ and $B$ are as in Proposition~\ref{prop.2g.classification} (2c), 
(2d) or (2e), then 
\begin{equation*}
 H^2(\mathcal E)=
H^2(\mathcal O_Y(-2H_0-(e+1)F)),
\end{equation*}
 which, by Serre duality is 
isomorphic to $H^0(\mathcal O_Y(-F))^\vee$, which vanishes. Hence 
$H^2(\mathcal O_X)=0$ in all cases.

  \smallskip
\noindent Now we prove (3). 
  If follows from \cite[(2.11)]{deformation} that 
$H^1(\mathcal N_\pi)$ is isomorphic to $H^1(\mathcal O_B(B))$.   
To compute $H^1(\mathcal O_B(B))$ we consider the 
exact sequence \eqref{eq.compute.H1.Npi}.
Since $H^1(\mathcal O_Y)=H^2(\mathcal O_Y)=0$, 
the groups $H^1(\mathcal O_B(B))$ and $H^1(\mathcal O_Y(B))$ 
are isomorphic. 
Using the projection formula and the Leray spectral sequence, 
we see that $h^1(\mathcal O_Y(B))$ is the sum of the dimensions
of the first cohomology group of certain line bundles on 
$\PP^1$. If $Y$ and $B$ are as in 
Proposition~\ref{prop.2g.classification} (2a), then the smallest 
among the degrees of those line bundles is $2a_n+2$, which is 
greater than or equal to $4$; if $Y$ and $B$ are as in 
Proposition~\ref{prop.2g.classification} (2c), then the smallest 
among the degrees of those line bundles is $2$; and
if $Y$ and $B$ are as in 
Proposition~\ref{prop.2g.classification} (2d), then the smallest 
among the degrees of those line bundles is $0$. In all these
three cases the first cohomology groups of the line bundles
vanishes, so $H^1(\mathcal O_Y(B))$ vanishes and so do 
$H^1(\mathcal O_B(B))$ and $H^1(\mathcal N_\pi)$. If $Y$ and $B$ are as in 
Proposition~\ref{prop.2g.classification} (2e), then 
\begin{equation*}
 h^1(\mathcal O_Y(B))=h^1(\mathcal O_{\PP^1}(6)) \oplus
 h^1(\mathcal O_{\PP^1}(4)) \oplus
 h^1(\mathcal O_{\PP^1}(2)) \oplus
 h^1(\mathcal O_{\PP^1}) \oplus
 h^1(\mathcal O_{\PP^1}(-2))=1,  
\end{equation*}
so $h^1(\mathcal N_\pi)=h^1(\mathcal O_B(B))=
h^1(\mathcal O_Y(B))=1$.

\smallskip
\noindent Now we prove (4). 
 We use cohomology sequence~\eqref{eq.normal.bundles.cohomology}.
First we will prove the vanishing of
$H^1(\mathcal N_{Y,\mathbf P^N}) 
\oplus H^1(\mathcal N_{Y,\mathbf P^N} 
\otimes \mathcal E)$. 
The vanishing of $H^1(\mathcal N_{Y,\mathbf P^N})$ is well 
known.
To prove the vanishing of $H^1(\mathcal N_{Y,\mathbf P^N} 
\otimes \mathcal E)$, 
consider the normal sequence
\begin{equation*}
 0 \longrightarrow \mathcal T_Y \longrightarrow 
 \mathcal T_{\PP^N}|_{i(Y)} 
 \longrightarrow \mathcal N_{i(Y),\mathbf P^N} \longrightarrow 0. 
\end{equation*}
Then, for $H^1(\mathcal N_{Y,\mathbf P^N} 
\otimes \mathcal E)$ to vanish, it suffices to have 
the vanishings of
$H^1(\mathcal T_{\PP^N}|_{i(Y)} \otimes \mathcal E)$ 
 and 
$H^2(\mathcal T_Y \otimes \mathcal E)$.
The vanishing of $H^1(\mathcal T_{\PP^N}|_{i(Y)} 
\otimes \mathcal E)$ has been already 
proved (see \eqref{eq.Euler.vanishings}). 
For the vanishing of
$H^2(\mathcal T_Y \otimes \mathcal E)$ 
we use \eqref{eq1}, \eqref{eq4} and 
\eqref{eq.tangent.bundle.Hirzebruch.twisted}. 
We argue first for $Y$ and $B$ as in 
Proposition~\ref{prop.2g.classification} (2a). 
It suffices to prove the vanishings of 
$H^2(p^*\mathcal T_{\PP^1} \otimes \mathcal E)$, 
$H^2(p^*E^\vee \otimes \mathcal O_Y(H) \otimes \mathcal E)$ 
and 
$H^3(\mathcal E)$. The first cohomology group is isomorphic to 
$H^2(\mathcal O_Y(-H+F))$ and this group vanishes. 
For the vanishings of the second cohomology group 
it suffices to show the vanishings of
$H^2(\mathcal O_Y(-(a_i+1)F))$ for all $i=1, \dots, n$;  
all of these vanishings occur.
Finally
$H^3(\mathcal E)$ is isomorphic to 
$H^3(\mathcal O_Y(-H-F))$, which also vanishes.
Now we argue for  $Y$ and $B$ as in 
Proposition~\ref{prop.2g.classification} (2c), (2d) or (2e).
To prove the vanishing of 
$H^2(\mathcal T_Y \otimes \mathcal E)$ in these cases we look 
at the exact sequence 
\eqref{eq.tangent.bundle.Hirzebruch.twisted}. Note that
$H^2(\mathcal O_Y(-F))=0$.
On the other hand, by Serre duality 
$h^2(\mathcal O_Y(-2H_0+(1-e)F))=h^0(\mathcal O_Y(-3F))=0$. 
Then $H^2(\mathcal T_Y \otimes \mathcal E)=0$ as wished. 

\smallskip 
\noindent Because  $H^1(\pi^*\mathcal N_{i(Y),\mathbf P^N})$
vanishes, $H^1(\mathcal N_\pi)$ surjects onto $H^1(\mathcal N_\varphi)$. 
If $Y$ and $B$ are as in 
Proposition~\ref{prop.2g.classification} (2a), (2c) or (2d), 
then $H^1(\mathcal N_\varphi)=0$ because $H^1(\mathcal N_\pi)=0$.
If $Y$ and $B$ are as in 
Proposition~\ref{prop.2g.classification} (2e), then 
$h^1(\mathcal N_\varphi)$ is $0$ or $1$ because $h^1(\mathcal N_\pi)=1$.
\end{proof}

\begin{proposition}\label{prop.2g.hom.0}
Let $X$, $Y$, $\mathcal E$, $i$, $\pi$  and $\varphi$ be 
as in Notation~\ref{setup}, \ref{notation.Y} and 
\ref{notation.Y.scroll}. Let $L=\varphi^*\mathcal O_Y(1)$.
If  
$Y$ and $B$ are as in 
Proposition~\ref{prop.2g.classification} (2b), 
then $\varphi$ is induced by the complete linear series 
$|L|$ and 
 $\mathrm{Hom}
  (\mathcal I/\mathcal I^2, \mathcal E)=\mathrm{Ext}^1
  (\Omega_Y, \mathcal E)=0$.
\end{proposition}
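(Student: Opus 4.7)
The plan is to follow the same strategy used in the proof of Proposition~\ref{prop.2g.hom.not.0}, taking advantage of a structural simplification specific to case (2b). Since $a_1=a_2=a_3=1$, the scroll $Y=S(1,1,1)$ is isomorphic to $\PP^1 \times \PP^2$ embedded by the Segre map. Writing $p$ and $r$ for the two projections, one has $\mathcal O_Y(F)=p^{\ast}\mathcal O_{\PP^1}(1)$, $\mathcal O_Y(H_0)=r^{\ast}\mathcal O_{\PP^2}(1)$ and $\mathcal O_Y(H)=\mathcal O_Y(H_0+F)$. Starting from $\mathcal E^{-2}=\mathcal O_Y(B)=\mathcal O_Y(4H-4F)$, a direct computation yields the crucial identity $\mathcal E=r^{\ast}\mathcal O_{\PP^2}(-2)$; that is, $\mathcal E$ descends to $\PP^2$.

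For the statement that $\varphi$ is induced by the complete linear series $|L|$, I would observe that $\pi_{\ast}L=\mathcal O_Y(1) \oplus \mathcal E(1)$, so this reduces to $H^0(\mathcal E(1))=0$. Since $\mathcal E(1)=\mathcal O_Y(-H_0+F)=p^{\ast}\mathcal O_{\PP^1}(1) \otimes r^{\ast}\mathcal O_{\PP^2}(-1)$, K\"unneth delivers this vanishing immediately. To obtain $\mathrm{Hom}(\mathcal I/\mathcal I^2,\mathcal E) \cong \mathrm{Ext}^1(\Omega_Y,\mathcal E)$, I would argue exactly as in Proposition~\ref{prop.2g.hom.not.0} that the connecting homomorphism $\gamma$ in sequence~\eqref{eq.cor.Fano} is an isomorphism. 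Via the restriction to $i(Y)$ of the Euler sequence on $\PP^N$, this reduces to checking the vanishings $H^{j}(\mathcal E)=0$ and $H^{j}(\mathcal E(1))=0$ for $j=0,1,2$, all of which follow by K\"unneth from the vanishings $H^{\bullet}(\PP^2,\mathcal O_{\PP^2}(-2))=0$ and $H^{\bullet}(\PP^2,\mathcal O_{\PP^2}(-1))=0$.

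It then remains to verify $H^1(\mathcal T_Y \otimes \mathcal E)=0$. Using the product decomposition $\mathcal T_Y=p^{\ast}\mathcal T_{\PP^1} \oplus r^{\ast}\mathcal T_{\PP^2}$, it is enough to show $H^1(p^{\ast}\mathcal T_{\PP^1} \otimes r^{\ast}\mathcal O_{\PP^2}(-2))=0$ and $H^1(r^{\ast}\mathcal T_{\PP^2}(-2))=0$. The first is immediate by K\"unneth. The second reduces, again by K\"unneth, to $H^1(\PP^2,\mathcal T_{\PP^2}(-2))=0$, which is a standard consequence of twisting the Euler sequence on $\PP^2$ by $\mathcal O(-2)$. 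No genuine obstacle arises here; the main structural observation that drives everything is the descent of $\mathcal E$ to $\PP^2$, which is precisely what fails in cases (2a), (2c)--(2e) and is responsible for $\mathrm{Hom}(\mathcal I/\mathcal I^2,\mathcal E)$ being nonzero in those cases.
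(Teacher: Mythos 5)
Your proposal is correct and follows essentially the same route as the paper's proof: reduce the completeness of the linear series to $H^0(\mathcal E(1))=0$, reduce the vanishing of $\mathrm{Hom}(\mathcal I/\mathcal I^2,\mathcal E)$ and $\mathrm{Ext}^1(\Omega_Y,\mathcal E)$ to the vanishing of the cohomology of $\mathcal E$ and $\mathcal E(1)$ (via the restriction to $i(Y)$ of the Euler sequence of $\PP^N$) together with $H^1(\mathcal T_Y\otimes\mathcal E)=0$. The only difference is in the packaging of these line--bundle computations: you identify $S(1,1,1)\cong\PP^1\times\PP^2$, note $\mathcal E=r^{*}\mathcal O_{\PP^2}(-2)$, and use K\"unneth plus the split tangent bundle and the Euler sequence on $\PP^2$, whereas the paper works with the scroll divisors $H,F$, the relative Euler sequence and pushforward to $\PP^1$ --- both are routine and yield the same vanishings.
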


\begin{proof}
 Since $B \sim 4H - 4F$, we have $H^0(\mathcal E(1))=
 H^0(\mathcal O_Y(-H+2F))=0$, 
so $\varphi$ is induced by the complete linear series 
$|L|$. 

\smallskip
\noindent
In view of \eqref{eq.cor.Fano}, 
in order to prove the vanishing of $\mathrm{Hom}
  (\mathcal I/\mathcal I^2, \mathcal E)$ it will suffice 
  to prove the vanishing of 
  $H^0(\mathcal T_{\mathbf P^N}|_{i(Y)} \otimes \mathcal E)$ and 
  of $H^1(\mathcal T_Y \otimes \mathcal E)$. 
  Arguing as in the proof of 
  Proposition~\ref{prop.2g.hom.not.0}
it suffices to prove 
$H^0(\mathcal E(1))$ and $H^1(\mathcal E)$ vanish. 
We have already seen $H^0(\mathcal E(1))=0$. For 
$H^1(\mathcal E)=0$, 
since $B \sim 4H-4F$, we have
 and 
 \begin{equation*}
  H^1(\mathcal E)=H^1(\mathcal O_Y(-2H+2F)=0.
 \end{equation*}

 \smallskip
 \noindent Now 
 we prove $H^1(\mathcal T_Y \otimes \mathcal E)=0$. We use 
 exact sequences 
 \eqref{eq1} and  \eqref{eq4}. 
Then it suffices to prove the vanishings of 
$H^1(p^*\mathcal T_{\PP^1} \otimes \mathcal E)$, 
$H^1(p^*E^\vee\otimes \mathcal O_Y(H) \otimes \mathcal E)$ and 
$H^2(\mathcal E)$. 
For the first one, note 
$H^1(p^*\mathcal T_{\PP^1} \otimes \mathcal E)=
H^1(\mathcal O_Y(-2H+4F))=0$.
The vanishing of the second one follows from 
$H^1(\mathcal O_Y(-H+F))=0$. 
The vanishing of the third one 
follows because
\begin{equation*}
  H^2(\mathcal E)=H^2(\mathcal O_Y(-2H+2F)=0.
 \end{equation*}
\end{proof}

\begin{remark}\label{remark.2g.ribbons}
{\rm 
\begin{enumerate}
 \item  It follows from 
 Proposition~\ref{prop.2g.hom.not.0} and 
 \cite[Corollary 1.4]{BE} that there exist nonsplit 
 ribbons supported on $Y$ and with conormal bundle $\mathcal E$, where 
 $Y$ and $\mathcal E$ are as in Proposition~\ref{prop.2g.classification} (2a), 
 (2c), (2d) or (2e). 
 \item It follows from 
 Proposition~\ref{prop.2g.hom.0} and 
 \cite[Corollary 1.4]{BE} that there do not exist nonsplit 
 ribbons supported on $Y$ and with conormal bundle $\mathcal E$, where 
 $Y$ and $\mathcal E$ are as in Proposition~\ref{prop.2g.classification} (2b).
\end{enumerate}
}
\end{remark}

\begin{theorem}
\label{thm.2g.deform.nonhyperelliptic}
 Let $X$ and $Y$ be as in 
notations \ref{setup} (2). 
 Let $(X,L)$ be a hyperelliptic variety such that $L^m=2g$
 and 
 let $\varphi$ be the morphism induced by 
 the complete linear series $|L|$. 
If $Y$ and $B$ are  as in (2a), (2c) or (2d) of 
 Proposition~\ref{prop.2g.classification}, then 
  we have: 
\begin{enumerate}   
 \item The morphism $\varphi$ and the polarized variety $(X,L)$
 are unobstructed.  
\item A general deformation of $\varphi$ is a finite morphism 
of degree $1$ onto 
 its image. 
Likewise, 
a general deformation of $(X,L)$ is nonhyperelliptic
but its complete linear series induces a finite morphism 
of degree $1$ onto 
 its image. 
\end{enumerate}
 \end{theorem}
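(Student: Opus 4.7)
The plan is to follow closely the pattern of the proof of Theorem~\ref{thm.Fano-K3.deform.nonhyperelliptic}, using Proposition~\ref{prop.2g.hom.not.0} as the key cohomological input in place of Corollary~\ref{cor.Fano.Hom}. Since $h^2(\mathcal O_X)=0$ by Proposition~\ref{prop.2g.hom.not.0}(2), Proposition~\ref{prop.algebraic.deformation} immediately supplies algebraic formally semiuniversal deformations for both $\varphi$ and $(X,L)$, so all subsequent statements can be phrased against them.

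For (1), the unobstructedness of $\varphi$ is immediate from $h^1(\mathcal N_\varphi)=0$, which is provided by Proposition~\ref{prop.2g.hom.not.0}(4) in cases (2a), (2c), (2d). For the unobstructedness of $(X,L)$, I would use the Atiyah extension~\eqref{Atiyah.L}: by \cite[Theorem 3.3.11(ii)]{Sernesi} it suffices to show $H^2(\Sigma_L)=0$, and since $H^2(\mathcal O_X)=0$ this reduces to $H^2(\mathcal T_X)=0$. Taking cohomology on~\eqref{eq.presentation.conormal.bundle} together with the vanishing of $H^1(\mathcal N_\varphi)$ shifts the burden to $H^2(\varphi^*\mathcal T_{\mathbf P^N})=0$; applying the projection formula, the Leray spectral sequence for $\pi$, and the Euler sequence on $\mathbf P^N$ restricted to $i(Y)$, this reduces in turn to the vanishing of $H^2(\mathcal O_Y(1))$, $H^3(\mathcal O_Y)$, $H^2(\mathcal E(1))$ and $H^3(\mathcal E)$, each of which can be verified directly from the descriptions of $Y$ and $\mathcal E$ in Proposition~\ref{prop.2g.classification}(2a), (2c), (2d).

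For (2), I would follow the second half of the proof of Theorem~\ref{thm.Fano-K3.deform.nonhyperelliptic}. By Proposition~\ref{prop.2g.hom.not.0}(1) the group $\mathrm{Hom}(\mathcal I/\mathcal I^2,\mathcal E)$ is nonzero, while Proposition~\ref{prop.2g.hom.not.0}(3) gives $H^1(\mathcal N_\pi)=0$, so the long exact sequence~\eqref{eq.normal.bundles.cohomology} forces the homomorphism $\Psi_2$ of Proposition~\ref{morphism.miguel} to be surjective. Picking a nonzero $\mu\in\mathrm{Hom}(\mathcal I/\mathcal I^2,\mathcal E)$ and a lift $\nu\in H^0(\mathcal N_\varphi)$ with $\Psi_2(\nu)=\mu$, the hypotheses of \cite[Theorem 1.5]{smoothing} are met: $\varphi$ is unobstructed, an algebraic formally semiuniversal deformation of $\varphi$ exists, and $\nu$ has nontrivial $\Psi_2$-image. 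Thus there is a smooth algebraic curve $T$ through $0$ in the base, tangent to $\nu$, over which $\Phi$ extends with $\Phi_t$ a finite morphism of degree $1$ onto its image for $t\neq 0$. Pulling back $\mathcal O_{\mathbf P^N_T}(1)$ yields a deformation $(\mathcal X_T,\mathcal L_T)$ of $(X,L)$ whose general member has $\mathcal L_t^m=2g$ but whose associated complete linear series defines a degree~$1$ morphism, hence is nonhyperelliptic; an etale base change then transports this to the semiuniversal deformation of $(X,L)$, and since being nonhyperelliptic is an open condition the claim follows.

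The main obstacle I expect is the case-by-case verification of $H^2(\mathcal T_X)=0$ on $\mathbf P^m$ and on the scroll cases (2c), (2d), which requires careful bookkeeping with the Euler and relative Euler sequences and the precise form of $\mathcal E$. A subtler delicate point is to ensure that the application of \cite[Theorem 1.5]{smoothing} yields a \emph{finite} degree~$1$ morphism in the generic fiber rather than merely a morphism whose degree drops for some other reason, so that the resulting polarized deformation of $(X,L)$ is genuinely nonhyperelliptic with $\mathcal L_t$ still inducing the same linear series dimension throughout.
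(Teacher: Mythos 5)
Your part (1) and the overall architecture of part (2) coincide with the paper's proof: the same reduction of unobstructedness of $(X,L)$ through $H^2(\Sigma_L)$, \eqref{Atiyah.L}, \eqref{eq.presentation.conormal.bundle} and the restricted Euler sequence to the four vanishings $H^2(\mathcal O_Y(1))$, $H^2(\mathcal E(1))$, $H^3(\mathcal O_Y)$, $H^3(\mathcal E)$, and the same use of $H^1(\mathcal N_\pi)=0$ plus \eqref{eq.normal.bundles.cohomology} to get surjectivity of $\Psi_2$ and a lift $\nu$ of a nonzero $\mu$.

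The one substantive flaw is the black box you invoke in part (2). You apply \cite[Theorem 1.5]{smoothing}, which is the tool the paper uses in Theorem~\ref{thm.Fano-K3.deform.nonhyperelliptic}; but that result is the \emph{embedding} criterion, and its application there rests on the fact that $Y$ is a hyperquadric, so $\mathcal I/\mathcal I^2\simeq\mathcal E$ and any nonzero $\mu\in\mathrm{Hom}(\mathcal I/\mathcal I^2,\mathcal E)$ is an isomorphism (in particular surjective, which is what the smoothing-of-ropes mechanism needs). In the cases (2a), (2c), (2d) of Proposition~\ref{prop.2g.classification}, $i(Y)$ is a rational normal scroll of codimension $N-m=g-1$, which is greater than $1$ in general, so $\mathcal I/\mathcal I^2$ has rank at least $2$ and a nonzero $\mu$ with values in the line bundle $\mathcal E$ has rank $1$ but is not surjective; the hypotheses of \cite[Theorem 1.5]{smoothing} are therefore not met, and its conclusion (that $\Phi_t$ is an embedding) is in any case stronger than what the statement asserts. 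The correct reference — and the one the paper uses — is \cite[Theorem 1.4]{deformation}, whose hypotheses are exactly the ones you verify (algebraic formally semiuniversal deformation, unobstructedness of $\varphi$, and a $\nu\in H^0(\mathcal N_\varphi)$ with $\Psi_2(\nu)=\mu$ of rank $1$), and whose conclusion is precisely a deformation $\Phi_T$ over a curve with $\Phi_t$ finite of degree $1$ onto its image for $t\neq 0$. This also resolves the worry you raise in your last paragraph: the "finite of degree $1$" conclusion is built into \cite[Theorem 1.4]{deformation}, and combined with $H^1(L)=0$ (projection formula and Leray) the value $h^0(\mathcal L_t)$ is constant, so the resulting deformations of $(X,L)$ are genuinely nonhyperelliptic. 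With that substitution, the remainder of your argument (pullback of $\mathcal O_{\PP^N_T}(1)$, \'etale base change to the semiuniversal deformation of $(X,L)$, openness) is the paper's.
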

 
 \begin{proof}
 By Proposition~\ref{prop.2g.hom.not.0} (2),
$H^2(\mathcal O_X)=0$. Thus,  
by Proposition~\ref{prop.algebraic.deformation}, 
there exist
algebraic formally semiuniversal deformations 
 of
$\varphi$ and $(X,L)$. It follows from  
Proposition~\ref{prop.2g.hom.not.0} (4) that $\varphi$ is 
unobstructed. To prove that $(X,L)$ is also 
unobstructed, we see that $H^2(\Sigma_L)=0$. 
For this we use exact sequence \eqref{Atiyah.L}. Since 
$H^2(\mathcal O_X)=0$, we only need to see that
$H^2(\mathcal T_X)$ vanishes. Since 
$H^1(\mathcal N_\varphi)$ vanishes, 
taking cohomology on exact sequence
\eqref{eq.presentation.conormal.bundle}, we 
see that, by the projection formula and the Leray's 
spectral sequence, it is enough to prove the vanishing of 
$H^2(\mathcal T_{\PP^N}|_Y)$ and 
$H^2(\mathcal T_{\PP^N}|_Y \otimes \mathcal E)$. Taking 
cohomology on the restriction to $Y$ of the Euler sequence of
the tangent bundle of $\PP^N$, since  
 $H^2(\mathcal O_Y(1))$, $H^2(\mathcal E(1))$, 
$H^3(\mathcal O_Y)$ and $H^3(\mathcal E)$ vanish, we obtain
the desired vanishings. Then $H^2(\Sigma_L)=0$ and 
by \cite[Theorem 3.3.11 (ii)]{Sernesi}, the polarized variety 
$(X,L)$ is unobstructed. 
This completes the proof of (1).

\smallskip
\noindent 
To prove (2) 
  we are going to use 
  \cite[Theorem 1.4]{deformation}.
    By Proposition~\ref{prop.2g.hom.not.0}, 
 Hom$(\mathcal I/\mathcal I^2,\mathcal E) 
 \neq 0$. If $\mu
\in \mathrm{Hom}(\mathcal I/\mathcal I^2,
\mathcal E)$, $\mu \neq 0$, then $\mu$ is a 
homomorphism of rank $1$ because
$\mathcal E$ is a line bundle.
By Proposition~\ref{prop.2g.hom.not.0},  
 $H^1(\mathcal N_\pi)=0$, so if follows 
 from 
the long exact sequence of cohomology 
\eqref{eq.normal.bundles.cohomology}
that 
 the map $\Psi_2$ 
 of Proposition~\ref{morphism.miguel}
 is surjective.
 Then,  given  a nonzero $\mu
 \in \mathrm{Hom}(\mathcal I/\mathcal I^2,
\mathcal E)$, there exists 
$\nu$ in 
$H^0(\mathcal N_\varphi)$ 
 such that $\Psi_2(\nu)=\mu$. 
Since there is an algebraic formally semiuniversal deformation 
 of $\varphi$ with base $Z$ and $\varphi$ is unobstructed, we see
that all the hypotheses 
of \cite[Theorem 1.4]{deformation} are satisfied, 
so \cite[Theorem 1.4]{deformation} and 
its proof  imply that there exists a smooth, 
algebraic curve $T$ in $Z$, passing through 
 $0$ and tangent to the tangent vector  $v$ of $Z$ 
 corresponding to $\nu$ 
 and a deformation $\Phi_T$ of $\varphi$ 
 over $T$ 
 such that $\Phi_0=\varphi$ and 
 $\Phi_t$ is a finite morphism of degree $1$ onto its image,
 for all $t \in T, 
 t \neq 0$.
Taking the pullback by $\Phi_T$
 of $\mathcal O_{\PP^N_T}(1)$ gives us a 
 deformation $(\mathcal X_T, \mathcal L_T)$
 of $(X,L)$, such that
 for all $t \in T \smallsetminus \{0\}$,
 $(\mathcal X_t, \mathcal L_t)$ is
 nonhyperelliptic. Let $Z'$ be the base of an
 algebraic formally semiuniversal deformation 
 $(\mathcal X, \mathcal L)$
 of
 $(X,L)$. Then, after shrinking $T$ if necessary,
 $(\mathcal X_T, \mathcal L_T)$ is obtained from 
 $(\mathcal X, \mathcal L)$
 by 
 etale base change, so 
 there is a point $z' \in Z'$ such that 
 $(\mathcal X_{z'}, \mathcal L_{z'})$ is nonhyperelliptic 
 but $|\mathcal L_{z'}|$ induces a finite morphism of degree $1$ 
 onto its image. 
Since this is an open condition, this 
finishes the proof of (2).
 \end{proof}

 \begin{remark}
  {\rm 
  The proof of 
  Theorem~\ref{thm.2g.deform.nonhyperelliptic}
  yields a more precise statement for the deformations of 
  $\varphi$. Indeed, note that 
  the element $\nu$ in the proof is a general element of 
  $H^0(\mathcal N_\varphi)$. In fact, $\nu$ belongs to 
  the complement $\mathcal U$ of a linear 
  subespace of $H^0(\mathcal N_\varphi)$, namely, the kernel of 
  $\Psi_2$, which has codimension 
  $h^0(\mathcal N_{i(Y),\PP^N} \otimes \mathcal E)$ 
  in $H^0(\mathcal N_\varphi)$ 
  (for the value of 
  $h^0(\mathcal N_{i(Y),\PP^N} \otimes \mathcal E)$, see
  Proposition~\ref{prop.2g.hom.not.0} (1)). Then, for any 
  $\nu \in \mathcal U$, there exists 
  a smooth, algebraic curve $T$ in $Z$, passing through 
 $0$ and tangent to $\nu$ 
and a deformation $\Phi_T$ of $\varphi$ 
 over $T$ 
 such that $\Phi_0=\varphi$ and, for all $t \in T, t \neq 0$,
 the morphism $\Phi_t$,  we have 
  is finite of degree $1$ 
 onto 
 its image.

 \smallskip
 \noindent
 We can also give a
  similar, more precise statement for the deformations of 
 $(X,L)$. Recall that
 the space $H^1(\Sigma_L)$ parameterizes first--order infinitesimal 
deformations of the pair $(X,L)$ up to isomorphism 
(see  \cite[p. 96]{HarrisMorrison}, 
\cite[Theorem 3.3.11 (ii)]{Sernesi} or 
\cite[pp. 126--128]{Zariski}). 
Taking cohomology in the commutative diagram~\cite[(3.39)]{Sernesi}
one obtains 
the exact sequence 
\begin{equation}\label{eq.cohomology.Ser.3.39}
 H^0(L)^{*} \otimes H^0(L) \longrightarrow 
 H^0(\mathcal N_\varphi) \overset{\zeta}\longrightarrow 
 H^1(\Sigma_L) \longrightarrow 
 H^0(L)^{*} \otimes H^1(L). 
\end{equation}
By the projection formula and the Leray's spectral sequence, 
$H^1(L)=0$, so $\zeta$ is surjective.

\smallskip
\noindent
Consider the commutative diagram 
\begin{equation}\label{eq.comm.square.triangle}
		\xymatrix@C-25pt@R-7pt{
		& H^1(\Sigma_L) \ar[dr]^{\zeta'} 
		\\
	H^0(\mathcal N_\varphi)  \ar[rr]^{\zeta''} \ar[d]^{\Psi_2} 
	\ar[ur]^{\zeta} 
	&& H^1(\mathcal T_X) \ar[d] \\
	H^0(\mathcal N_{i(Y),\PP^N} \otimes \mathcal E)
	\ar[rr]^{\gamma} && H^1(\mathcal T_Y \otimes \mathcal E)}
\end{equation}		
where the square arises from \cite[Proposition 3.7 (1)]{Gonzalez}), 
the homomorphism $\zeta$ is the one in 
\eqref{eq.cohomology.Ser.3.39}, 
the homomorphism $\zeta'$ arises when taking cohomology in 
exact sequence~\eqref{Atiyah.L} and $\zeta$, $\zeta'$ and 
$\zeta''$ are the homomorphisms that forget, in the obvious way,
part of the 
information of the first order infinitesimal deformations 
of $\varphi$ and $(X,L)$.

\smallskip
\noindent
From the commutativity of \eqref{eq.comm.square.triangle} 
and the injectivity of $\gamma$ (in fact, $\gamma$ is 
an isomorphism, see the proof of 
Proposition~\ref{prop.2g.hom.not.0}), it follows that 
$\zeta^{-1}(\zeta(\mathrm{ker}\Psi_2))=\mathrm{ker}\Psi_2$.

\smallskip
\noindent
Then, since there exist elements in $H^0(\mathcal N_\varphi) 
\smallsetminus
\mathrm{ker}\Psi_2$, we have that $\zeta(\mathrm{ker}\Psi_2)$
is a proper vector subspace of $H^1(\Sigma_L)$. Since 
$\zeta$ is surjective, $\mathcal U'=H^1(\Sigma_L) \smallsetminus
\zeta(\mathrm{ker}\Psi_2)$ is a (non empty) open set of 
$H^1(\Sigma_L)$. Then, for any 
  $\varpi \in \mathcal U'$, there exists 
  a smooth, algebraic curve $T'$ in $Z'$, passing through 
 $0$ and tangent to $\varpi$ 
and a deformation $(\mathcal X_{T'},\mathcal L_{T'})$ of 
$(X,L)$ 
 over $T'$ 
 such that $(\mathcal X_{0},\mathcal L_{0})=(X,L)$ and, 
 for all $t' \in T', t' \neq 0$, we have 
  that $|\mathcal L_{t'}|$ induces 
a finite of degree $1$ 
 onto 
 its image.
 
 \smallskip
 \noindent
An analogous remark can be made in relation to 
 Theorem~\ref{thm.Fano-K3.deform.nonhyperelliptic}.}
 \end{remark}

 \begin{theorem}
\label{thm.2g.deform.nonhyperelliptic2}
 Let $X$ and $Y$ be as in 
notations \ref{setup} (2). 
 Let $(X,L)$ be a hyperelliptic variety such that $L^m=2g$
 and 
 let $\varphi$ be the morphism induced by 
 the complete linear series $|L|$. 
If $Y$ and $B$ are  as in (2e) of 
 Proposition~\ref{prop.2g.classification}, 
then a
 general deformation of $\varphi$ is a finite morphism 
of degree $1$ onto 
 its image and  
a general deformation of $(X,L)$ is nonhyperelliptic
but its complete linear series induces a finite morphism 
of degree $1$ onto 
 its image.
 \end{theorem}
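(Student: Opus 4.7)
The plan is to follow the strategy of the proof of Theorem~\ref{thm.2g.deform.nonhyperelliptic} as far as possible and to address the one new difficulty of case (2e): by Proposition~\ref{prop.2g.hom.not.0}(4) we have $h^1(\mathcal N_\varphi)\in\{0,1\}$, so we no longer know a priori that $\varphi$ is unobstructed. I would begin by gathering the ingredients that remain available. Proposition~\ref{prop.2g.hom.not.0}(2) gives $H^2(\mathcal O_X)=0$, so Proposition~\ref{prop.algebraic.deformation} furnishes algebraic formally semiuniversal deformations $\mathcal X\to Z$ of $\varphi$ and $(\mathcal X',\mathcal L')\to Z'$ of $(X,L)$. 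Next, from the cohomology sequence~\eqref{eq.normal.bundles.cohomology}: since $\dim\mathrm{Hom}(\mathcal I/\mathcal I^2,\mathcal E)=2$ by Proposition~\ref{prop.2g.hom.not.0}(1) and $\dim H^1(\mathcal N_\pi)=1$ by (3), the restriction of the connecting map $\epsilon$ to the summand $\mathrm{Hom}(\mathcal I/\mathcal I^2,\mathcal E)$ has at least a one-dimensional kernel. So one can choose a nonzero $\mu\in\mathrm{Hom}(\mathcal I/\mathcal I^2,\mathcal E)$ lying in that kernel and, by exactness, a preimage $\nu\in H^0(\mathcal N_\varphi)$ with $\Psi_2(\nu)=\mu$; since $\mathcal E$ is a line bundle, $\mu$ is automatically of rank $1$.

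The main obstacle is to integrate this first-order deformation $\nu$ to a genuine algebraic one-parameter family with general fibre of degree one, because \cite[Theorem 1.4]{deformation} is stated assuming $\varphi$ is unobstructed. To circumvent this, I would re-examine the proof of \cite[Theorem 1.4]{deformation}: the role of the unobstructedness hypothesis there is to smooth, inside $\PP^N$, an embedded ribbon $\widetilde Y$ with support $i(Y)$ and conormal bundle $\mathcal E$ which is canonically built out of $(\nu,\mu)$. This ribbon smoothing can instead be performed directly in the Hilbert scheme of subschemes of $\PP^N$, which is smooth at $[i(Y)]$ in every case of Notation~\ref{notation.Y} (this is exactly hypothesis (4) of Theorem~\ref{Psi2=0.3} and follows from $H^1(\mathcal N_{i(Y),\PP^N})=0$). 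Concretely, $\widetilde Y$ is a point of this Hilbert scheme lying over $[i(Y)]$, and the obstruction to smoothing $\widetilde Y$ along the tangent direction associated with $(\nu,\mu)$ actually lies in the obstruction space for the ambient Hilbert scheme, which vanishes. Pulling the resulting embedded one-parameter family back through a variant of the double cover construction of Theorem~\ref{Psi2=0.3} (allowing the generic fibre to be a smooth subscheme rather than a genuine double cover) produces the desired algebraic deformation $\Phi_T:\mathcal X_T\to\PP^N_T$ over a smooth curve $T$ passing through $0$, tangent to $\nu$, with $\Phi_t$ a finite morphism of degree one onto its image for $t\neq 0$.

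Once $\Phi_T$ is obtained, the remainder of the proof is verbatim as in Theorem~\ref{thm.2g.deform.nonhyperelliptic}(2): pulling back $\mathcal O_{\PP^N_T}(1)$ gives a deformation $(\mathcal X_T,\mathcal L_T)$ of $(X,L)$ whose general fibre is nonhyperelliptic with $|\mathcal L_t|$ inducing a finite morphism of degree one, and an étale base change compares this with the formal semiuniversal family over $Z'$, giving points $z'\in Z'$ with the asserted property (openness of degree one in a flat family completes the argument). The hardest step to carry out will be justifying rigorously that the obstruction to extending $\nu$ in the deformation theory of $\varphi$ vanishes despite $H^1(\mathcal N_\varphi)$ possibly being nonzero; this should be done by tracking the natural forgetful map from the deformation theory of the embedded ribbon $\widetilde Y\subset\PP^N$ (controlled by the Hilbert scheme, and hence unobstructed) to the deformation theory of $\varphi$, and checking that the class $\nu$ lies in the image of this map so that its obstruction is killed by a class which is already zero upstairs.
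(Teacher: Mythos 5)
Your opening moves are fine, and in fact your way of producing the first-order direction is cleaner than the paper's: since $\mathrm{Hom}(\mathcal I/\mathcal I^2,\mathcal E)$ is $2$-dimensional and $h^1(\mathcal N_\pi)=1$, the restriction of $\epsilon$ in \eqref{eq.normal.bundles.cohomology} to that summand has nontrivial kernel, and lifting $(0,\mu)$ through $\Psi$ gives $\nu$ with $\Psi_2(\nu)=\mu\neq 0$ uniformly in all cases. The genuine gap is in the step you yourself flag as the hardest one: integrating $\nu$. The ribbon $\widetilde Y$ attached to $\mu$ is \emph{not} a point of the Hilbert scheme ``lying over $[i(Y)]$''; it is a subscheme of $\PP^N$ with a different Hilbert polynomial, so it sits in a different Hilbert scheme, and the smoothness used in hypothesis (4) of Theorem~\ref{Psi2=0.3} (i.e.\ $H^1(\mathcal N_{i(Y),\PP^N})=0$) says nothing about embedded deformations of $\widetilde Y$, which are obstructed in $H^1(\mathcal N_{\widetilde Y,\PP^N})$ (or the corresponding Ext group), a space you neither compute nor show to vanish. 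So the claim that the obstruction ``lies in the obstruction space for the ambient Hilbert scheme, which vanishes'' is a non sequitur, and the parenthetical ``controlled by the Hilbert scheme, hence unobstructed'' is false in general. Moreover, even granting an embedded smoothing of $\widetilde Y$, converting it into a deformation of the \emph{morphism} $\varphi$ with central fibre $\varphi$ and tangent vector $\nu$ is precisely the nontrivial content of \cite[Theorem 1.4]{deformation}, and that is exactly where unobstructedness of $\varphi$ enters (to produce an algebraic curve in $Z$ tangent to the prescribed $\nu$); your proposed ``variant of the double cover construction of Theorem~\ref{Psi2=0.3}'' can only output degree-$2$ covers, never the degree-$1$ fibres you need. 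Note also that in case (2e) the general deformation is only finite of degree $1$ onto its image, not an embedding, so its image need not even be a smooth subscheme.

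The paper resolves the difficulty by a case analysis which you could adopt to repair the argument. One has $h^0(\mathcal N_\varphi)-h^1(\mathcal N_\varphi)\leq \dim Z\leq h^0(\mathcal N_\varphi)$ with $h^1(\mathcal N_\varphi)\in\{0,1\}$. If $h^1(\mathcal N_\varphi)=0$, or if $h^1(\mathcal N_\varphi)=1$ but $\dim Z=h^0(\mathcal N_\varphi)$, then $\varphi$ is unobstructed and the proof of Theorem~\ref{thm.2g.deform.nonhyperelliptic} applies verbatim (with $\nu$ found as above, or as in the paper). In the remaining case, $h^1(\mathcal N_\varphi)=1$ and $\dim Z=h^0(\mathcal N_\varphi)-1$, the tangent cone of $Z$ at $0$ has codimension $1$ in $H^0(\mathcal N_\varphi)$, whereas $\ker\Psi_2$ has codimension $2$ by Proposition~\ref{prop.2g.hom.not.0}(1); hence the tangent cone is not contained in $\ker\Psi_2$ and contains a vector $\nu$ with $\Psi_2(\nu)\neq 0$. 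Being in the tangent cone, $\nu$ is tangent to an actual algebraic curve $\hat T\subset Z$ through $0$; desingularizing $\hat T$ gives a genuine one-parameter family of morphisms tangent to $\nu$, and \cite[Proposition 1.3]{deformation} --- which needs an honest family rather than unobstructedness --- shows its general fibre is finite of degree $1$ onto its image. From that point on, your concluding step (pulling back $\mathcal O_{\PP^N_T}(1)$, comparing with the semiuniversal family of $(X,L)$ by \'etale base change, and using openness of ``finite of degree $1$'') coincides with the paper's.
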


\begin{proof}
By Proposition~\ref{prop.2g.hom.not.0} (2),
$H^2(\mathcal O_X)=0$, so, 
by Proposition~\ref{prop.algebraic.deformation}, 
there exist
algebraic formally semiuniversal deformations 
 of
$\varphi$ and $(X,L)$.
We look now at long exact sequence of 
cohomology~\eqref{eq.normal.bundles.cohomology}. 
Recall that in this case $h^1(\mathcal N_\pi)=1$ 
and that we saw in the proof of 
Proposition~\ref{prop.2g.hom.not.0} that $\eta$
is surjective, so that $h^1(\mathcal N_\varphi)$
is $0$ or $1$. Let $Z$ be the base of an 
algebraic formally semiuniversal deformation of $\varphi$. 
We have (see 
\cite[Corollary 2.2.11]{Sernesi})
\begin{equation*}
 h^0(\mathcal N_\varphi) - 
 h^1(\mathcal N_\varphi) \leq 
  \mathrm{dim}Z \leq h^0(\mathcal N_\varphi).
\end{equation*}
We argue first for $h^1(\mathcal N_\varphi)=1$. 
Then $\eta$ is an isomorphism, so 
$\Psi_2$ is surjective.
If 
dim$Z=h^0(\mathcal N_\varphi)$, then 
$\varphi$ is unobstructed, so 
we can argue 
as in
the proof of
Theorem~\ref{thm.2g.deform.nonhyperelliptic} to find 
deformations $\Phi_T$ and $(\mathcal X_T, \mathcal L_T)$ 
over a smooth algebraic curve $T$ in $Z$.

\smallskip
\noindent
Now, If  $h^1(\mathcal N_\varphi)=1$ and 
dim$Z=h^0(\mathcal N_\varphi)-1$, then 
the tangent cone of  $Z$ 
at $0$ has 
codimension $1$ in $H^0(\mathcal N_\varphi)$. 
By Proposition~\ref{prop.2g.hom.not.0}, 
the dimension of 
 Hom$(\mathcal I/\mathcal I^2,\mathcal E) 
 =2$, so ker$\Psi_2$ has codimension $2$ 
 in $H^0(\mathcal N_\varphi)$. Thus 
 the tangent cone of $Z$ is not contained in
ker$\Psi_2$ and there exists an element $\nu$ in the tangent
 cone of $Z$ such that $\Psi_2(\nu)=\mu \neq 0$. 
 Since $\mathcal E$ is a line bundle, 
 $\mu$ is a homomorphism of rank $1$. 
 Since $\nu$ is in the tangent
 cone of $Z$, there exists an algebraic
 curve $\hat T$, $0 \in \hat T$ such that $\nu$ is 
 tangent to $\hat T$. Desingularizing $\hat T$ if 
 necessary
 we obtain a flat family of morphisms 
 satisfying the 
 hypotheses of 
 \cite[Proposition 1.3]{deformation}, so 
there exists a 
 deformation $\Phi_T$ of $\varphi$
 over a smooth algebraic curve $T$ 
 such that the fiber $\Phi_t$ over any 
 $t \in T \smallsetminus \{0\}$ is a morphism  
 to $\PP^N$, which is 
 finite and of degree $1$ onto its image. 
 Now, taking the pullback by $\Phi_T$
 of $\mathcal O_{\PP^N_T}(1)$ gives us a 
 deformation $(\mathcal X_T, \mathcal L_T)$
 of $(X,L)$. Since 
 for all $t \in T \smallsetminus \{0\}$ is of degree $1$,
 $(\mathcal X_t, \mathcal L_t)$ is
 non hyperelliptic. Let $Z'$ the base of an algebraic formally 
 semiuniversal deformation of $(X,L)$.  
 Since $\Phi_T$ and
 $(\mathcal X_T, \mathcal L_T)$ are obtained, by 
 etale base change, from the algebraic formally 
 semiuniversal deformations over, respectively, $Z$ and $Z'$, 
 we may conclude that there are $z \in Z$ and $z' \in Z'$ such that
 $\Phi_z$ and the morphism induced by $|\mathcal L_{z'}|$ are
 finite and of degree $1$ onto its image. 
 
 \smallskip
\noindent 
We argue now for $h^1(\mathcal N_\varphi)=0$.  
In this case $\varphi$ is unobstructed. The 
 kernel of the homomorphism $\epsilon$ of
 \eqref{eq.normal.bundles.cohomology}
 has codimension $1$ 
 in  
 $\mathrm{Hom}(\mathcal I/\mathcal I^2,
\mathcal O_Y) \oplus 
 \mathrm{Hom}(\mathcal I/\mathcal I^2,
\mathcal E)$. By 
Proposition~\ref{prop.2g.hom.not.0},
the linear subspace 
\begin{equation*}
 W=
\mathrm{Hom}(\mathcal I/\mathcal I^2,
\mathcal O_Y) \times \{0\}
\end{equation*}
 has 
codimension $2$ in 
 $\mathrm{Hom}(\mathcal I/\mathcal I^2,
\mathcal O_Y)
 \oplus  \mathrm{Hom}(\mathcal I/\mathcal I^2,
\mathcal E)$. Then the kernel of $\epsilon$ 
is not contained in $W$. Thus there exist
$\nu \in 
H^0(\mathcal N_\varphi)$ such that $\Psi_2(\nu)
\neq 0$. Then 
we can argue 
as in
the proof of
Theorem~\ref{thm.2g.deform.nonhyperelliptic} to find 
deformations $\Phi_T$ and $(\mathcal X_T, \mathcal L_T)$ 
over a smooth algebraic curve $T$ in $Z$.

\smallskip
\noindent In all cases, there are $z \in Z$ 
and $z' \in Z'$ such that
 $\Phi_z$ and the morphism induced by $|\mathcal L_{z'}|$ are
 finite and of degree $1$ onto its image. Thus we may conclude
 the proof of (2) using that 
being a  finite morphism of degree $1$ is an open 
 condition. 
\end{proof}

\begin{question}
 {\rm As seen in the proof of 
 Theorem~\ref{thm.2g.deform.nonhyperelliptic2}, since 
 we do not know if
 $h^1(\mathcal N_\varphi) = 0$ (see Proposition~\ref{prop.2g.hom.not.0}), 
 it is not 
 clear whether $\varphi$ is unobstructed or not. It would be 
 interesting to settle the question one way or the other.}
\end{question}

 \begin{theorem}
  \label{thm.2g.deform.hyperelliptic}
Let $X$ and $Y$ be as in 
notations \ref{setup} (2). 
 Let $(X,L)$ a hyperelliptic 
variety such that $L^m=2g$ and let $\varphi$ 
be the morphism induced by $|L|$. 
If $Y$ and $B$ are as in 
Proposition~\ref{prop.2g.classification} (1) or (2b), 
then $\varphi$ is unobstructed and any deformation of $(X,L)$ is 
 hyperelliptic. 
 \end{theorem}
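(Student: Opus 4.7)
The plan is to apply Theorem~\ref{Psi2=0.3} directly to the morphism $\varphi$, since its conclusion simultaneously gives the unobstructedness of $\varphi$ and the fact that every deformation of $\varphi$ is a finite degree $2$ morphism over a deformation of $i(Y) \subset \PP^N$. Once this is established, the remaining issue is to translate from deformations of $\varphi$ to deformations of $(X,L)$, and to check that deformations of $i(Y)$ in $\PP^N$ stay varieties of minimal degree.

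First I would reduce deformations of $(X,L)$ to deformations of $\varphi$. Using the projection formula $\pi_*L = \SO_Y(1) \oplus \mathcal E(1)$ together with direct cohomology on $\PP^m$ (case (1)) and on $Y = S(1,1,1)$ (case (2b)), I would verify $H^1(L) = 0$. Hence, by semicontinuity and the base change results of \cite[Chapitre III, \S 7]{EGA}, after shrinking the base $Z$ of any deformation $(\mathcal X, \mathcal L)$ of $(X,L)$ the function $h^0(\mathcal L_z)$ is constant, and $\mathcal L$ induces a deformation $\Phi$ of $\varphi$ to $\PP^N_Z$.

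Next I would verify the five hypotheses of Theorem~\ref{Psi2=0.3}. Since $\mathrm{Pic}(Y)$ is discrete and $H^1(\SO_Y) = 0$ in both cases, conditions (1) and (3) are immediate; condition (4) is standard ($\PP^m$ and rational normal scrolls satisfy $H^1(\SN_{Y,\PP^N}) = 0$). Condition (2), $H^1(\mathcal E^{-2}) = 0$, is a direct computation: $\mathcal E^{-2} = \SO_{\PP^m}(4)$ in case (1), and $\mathcal E^{-2} = \SO_Y(4H - 4F)$ on $S(1,1,1)$ in case (2b), both with vanishing $H^1$ via the projection $p: Y \to \PP^1$. The key condition (5), $\Psi_2 = 0$, is trivially satisfied in case (1) because $h^0(L) = m+1$ forces $i: \PP^m \to \PP^N$ to be the identity, hence $\mathcal I = 0$; in case (2b) it is exactly the content of Proposition~\ref{prop.2g.hom.0}.

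Applying Theorem~\ref{Psi2=0.3}, $\varphi$ is unobstructed, and every deformation of $\varphi$ is a finite degree $2$ morphism over a deformation $\mathcal Y_z$ of $i(Y)$ in $\PP^N$. The one point requiring a small extra argument --- the main (mild) obstacle --- is that $\mathcal Y_z$ must itself be a variety of minimal degree, so that the induced $(\mathcal X_z, \mathcal L_z)$ is hyperelliptic in Fujita's sense. In case (1), $\mathcal Y_z$ is forced to be $\PP^m$ since $i(Y) = \PP^N$. In case (2b), any flat deformation of $i(Y) = S(1,1,1) \subset \PP^5$ has the same Hilbert polynomial, so is a threefold of degree $3$ in $\PP^5$, hence a variety of minimal degree; its smooth representatives are rational normal scrolls. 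Combining this with the equalities $h^0(\mathcal L_z) = h^0(L)$ and $\mathcal L_z^m = L^m = 2g$, we conclude that $(\mathcal X_z, \mathcal L_z)$ is hyperelliptic, completing the proof.
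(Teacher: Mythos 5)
Your proposal is correct and follows essentially the same route as the paper: reduce deformations of $(X,L)$ to deformations of $\varphi$ via $H^1(L)=0$ and base change, verify the hypotheses of Theorem~\ref{Psi2=0.3} (with hypothesis (5) given by Proposition~\ref{prop.2g.hom.0} in case (2b) and trivially in case (1), where $\mathcal I=0$), and conclude hyperellipticity because embedded deformations of $i(Y)$ remain varieties of minimal degree. The only cosmetic differences are that the paper dispatches case (1) partly through Remark~\ref{remark.deformation.varieties}, and that the correct quick justification of hypothesis (1) of Theorem~\ref{Psi2=0.3} is $H^2(\mathcal O_Y)=0$ (which holds here) rather than discreteness of $\mathrm{Pic}(Y)$.
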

 
 \begin{proof}
  If $Y=\PP^m$, although the claim about deformations follows 
  from 
  Remark~\ref{remark.deformation.varieties} (3), 
the complete result, including the 
 unobstructedness of $\varphi$, follows
  trivially from Theorem~\ref{Psi2=0.3}.

   \smallskip
  \noindent
  Let now $Y$ and $B$ be 
  as in Proposition~\ref{prop.2g.classification} (2b). 
Let $Z$ be a smooth, algebraic variety with a distinguished point
$0 \in Z$.
Let $(\mathcal X, \mathcal L)$ be a flat family  
over $Z$ such that the fiber $(\mathcal X_0, \mathcal L_0)$ over $0$
is isomorphic to $(X,L)$. 
Since $H^1(\mathcal O_Y(1))=0$ and 
$H^1(\mathcal E(1))=0$,
because $H^1(\mathcal E(1))=H^1(\mathcal O_Y(-H+2F))$, 
we have $H^1(L)=H^1(\pi_*L)=0$, so,
arguing as in the proof of 
Theorem~\ref{thm.Fano-K3.deform.nonhyperelliptic}
and shrinking $Z$ if necessary, we
obtain from $\Phi_*\mathcal L$  a morphism
\begin{equation*}
 \Phi: \mathcal X \longrightarrow \PP^N_Z
\end{equation*}
such that $\Phi_0=\varphi$, i.e., $\Phi$ is a deformation of 
$\varphi$.

\smallskip
\noindent We apply Theorem~\ref{Psi2=0.3} to $\Phi$. 
Proposition~\ref{prop.2g.hom.0} tells 
Hom$(\mathcal I/\mathcal I^2, \mathcal E)=0$, 
so $\Psi_2=0$. As already mentioned, $H^1(\mathcal O_Y)=0$, and 
$H^2(\mathcal O_Y)=0$ also. In addition, 
\begin{equation*}
 H^1(\mathcal E^{-2})=H^1(\mathcal O_Y(B))=H^1(\mathcal O_Y(4H_0))=
 H^1(\mathcal O_{\PP^1})^{\oplus 5}=0 
\end{equation*}
and it is well known that $Y$ is unobstructed in projective space. 
Then all the hypotheses of Theorem~\ref{Psi2=0.3} are satisfied so
$\varphi$ is unobstructed and, 
 for all $z \in Z$, the morphism $\Phi_z$
 has degree $2$ onto 
 its image, which is a deformation of $i(Y)$ in $\PP^N$. 
Since 
 any deformation of $i(Y)$ is variety of minimal degree, 
 the morphism $\Phi_z$ is induced by the complete linear
 series $H^0(\mathcal L_z)$ and $\mathcal L_z^m=L^m$, 
 we conclude that 
 $(\mathcal X_z, \mathcal L_z)$ is a hyperelliptic polarized 
 variety.
 \end{proof}

\section{Deformations of 
generalized hyperelliptic polarized 
varieties.}\label{Section.generalized.hyperelliptic}

\noindent In this section we continue the study of deformations of  certain
generalized hyperelliptic polarized 
varieties, looking this time at Calabi-Yau and general type varieties. If 
$(X,L)$ is either a hyperelliptic polarized Calabi-Yau variety of dimension $m$, 
$m \geq 3$, or a 
hyperelliptic variety of general type, canonically polarized, of dimension $m$, 
$m \geq 2$, then, by adjunction, $L^m < 2g-2$ ($g$ is the sectional genus 
of $(X,L)$)
and
$h^0(\mathcal L_t)$ is constant for any deformation of $(X,L)$
(because $H^1(L)=0$, by the Kodaira vanishing theorem, if $X$ is Calabi-Yau and 
because the invariance by deformation of the geometric genus otherwise); thus by 
Remark~\ref{remark.deformation.varieties} (1), all deformations of 
$(X,L)$ are hyperelliptic. Then in this context it is interesting to 
see  if there are further reasons for this phenomenon, namely, all deformations
of a hyperelliptic polarized variety are hyperelliptic, to happen. That is 
the case, since all the deformations of generalized hyperelliptic Calabi-Yau and 
general type varieties are generalized hyperelliptic,  
as we will see in Theorems~\ref{thm.deformation.CY} and 
\ref{thm.deformation.general.type}. First, we recall the 
definition of Calabi-Yau variety: 

\begin{definition}\label{defi.CY}
 {\rm Let $\mathfrak{X}$ be a smooth variety of dimension $m$, 
 $m \geq 3$. We say that $\mathfrak{X}$ is a Calabi--Yau 
 variety if 
 \begin{enumerate}
  \item $\omega_\mathfrak{X}= \mathcal O_\mathfrak{X}$; and 
  \item $H^i(\mathcal O_\mathfrak{X})=0$, for all $1 \leq i 
  \leq m-1$.
 \end{enumerate}}
\end{definition}

\begin{lemma}\label{lemma.CY}
 {Let $X$, $Y$, $\pi$ and $\mathcal E$ be as in notations \ref{setup} and 
\ref{notation.Y} with $m \geq 3$. If  
$\omega_X=\mathcal O_X$, then $\mathcal E=\omega_Y$
and $X$ is Calabi--Yau.}
\end{lemma}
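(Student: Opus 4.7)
The plan is to run the argument in three short steps: first identify $\mathcal{E}$, then verify the cohomological conditions in Definition~\ref{defi.CY}.

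\textbf{Step 1 (identifying $\mathcal E$).} By the standard formula for a flat double cover with trace--zero module $\mathcal E$, already recalled in the proof of Proposition~\ref{theorem.Fano}, one has
\begin{equation*}
\omega_X = \pi^{*}(\omega_Y \otimes \mathcal E^{-1}).
\end{equation*}
Setting $M := \omega_Y \otimes \mathcal E^{-1}$, the hypothesis $\omega_X = \mathcal O_X$ yields $\pi^{*}M = \mathcal O_X$. Pushing forward and using $\pi_{*}\mathcal O_X = \mathcal O_Y \oplus \mathcal E$ together with the projection formula gives
\begin{equation*}
M \oplus (M \otimes \mathcal E) \;=\; \pi_{*}\pi^{*}M \;=\; M \otimes \pi_{*}\mathcal O_X \;\cong\; \mathcal O_Y \oplus \mathcal E.
\end{equation*}

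\textbf{Step 2 (concluding $M = \mathcal O_Y$).} Taking determinants of both sides of the isomorphism above produces $M^{2} \otimes \mathcal E \cong \mathcal E$, hence $M^{2} \cong \mathcal O_Y$. Now for each of the three types of $Y$ allowed by Notation~\ref{notation.Y} the Picard group is torsion--free: $\mathrm{Pic}(\PP^m) = \ZZ$; $\mathrm{Pic}(Y) = \ZZ$ for a smooth hyperquadric of dimension $\geq 3$ by the Lefschetz hyperplane theorem; and $\mathrm{Pic}(Y) = \ZZ H_0 \oplus \ZZ F$ for a projective bundle $Y = \PP(E_0)$ over $\PP^{1}$. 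In every case $M^{2} \cong \mathcal O_Y$ forces $M \cong \mathcal O_Y$, so $\mathcal E \cong \omega_Y$, which is the first assertion.

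\textbf{Step 3 (the Calabi--Yau vanishings).} It remains to verify $H^{i}(\mathcal O_X) = 0$ for $1 \leq i \leq m-1$. Using $\pi_{*}\mathcal O_X = \mathcal O_Y \oplus \mathcal E = \mathcal O_Y \oplus \omega_Y$ and the affineness of $\pi$,
\begin{equation*}
H^{i}(\mathcal O_X) \;=\; H^{i}(\mathcal O_Y) \oplus H^{i}(\omega_Y).
\end{equation*}
For each $Y$ in Notation~\ref{notation.Y} one has $H^{j}(\mathcal O_Y) = 0$ for all $j \geq 1$: this is standard for $\PP^{m}$ and for hyperquadrics of dimension $\geq 3$ (from the short exact sequence $0 \to \mathcal O_{\PP^{m+1}}(-2) \to \mathcal O_{\PP^{m+1}} \to \mathcal O_Y \to 0$), and for $Y = \PP(E_0)$ it follows from Leray applied to $p \colon Y \to \PP^{1}$ together with $R^{j}p_{*}\mathcal O_Y = 0$ for $j>0$ and $p_{*}\mathcal O_Y = \mathcal O_{\PP^{1}}$. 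Combining this with Serre duality, $H^{i}(\omega_Y) \cong H^{m-i}(\mathcal O_Y)^{\vee}$, gives the vanishing of both summands precisely in the range $1 \leq i \leq m-1$. Together with the hypothesis $\omega_X = \mathcal O_X$, this shows $X$ is Calabi--Yau in the sense of Definition~\ref{defi.CY}.

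There is no real obstacle: the only place where one needs to be careful is Step 2, where the case $M \cong \mathcal E$ (equivalently $\mathcal E^{2} \cong \mathcal O_Y$) would correspond to an étale double cover and must be excluded; the torsion--freeness of $\mathrm{Pic}(Y)$ handles this uniformly for all three families in Notation~\ref{notation.Y}, so no separate case analysis is needed.
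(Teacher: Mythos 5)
Your proposal is correct, and its overall skeleton (use $\omega_X=\pi^*(\omega_Y\otimes\mathcal E^{-1})$ to reduce to showing $M:=\omega_Y\otimes\mathcal E^{-1}$ is trivial, then compute $H^i(\mathcal O_X)=H^i(\mathcal O_Y)\oplus H^i(\omega_Y)$ by the projection formula, Leray and Serre duality) coincides with the paper's. Where you diverge is in the justification of the key first step. The paper argues in one line: since $\pi$ is finite and $\pi^*M=\mathcal O_X$, the bundle $M$ is numerically trivial, and on the three classes of $Y$ in Notation~2.1 a numerically trivial line bundle is trivial. You instead push forward, getting $M\oplus(M\otimes\mathcal E)\cong\mathcal O_Y\oplus\mathcal E$, take determinants to obtain $M^{\otimes 2}\cong\mathcal O_Y$ (this is in effect the norm of $\pi^*M\cong\mathcal O_X$), and conclude $M\cong\mathcal O_Y$ from the torsion-freeness of $\operatorname{Pic}(Y)$ in all three cases. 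Both routes are valid and of comparable length; yours is slightly more self-contained in that it only uses the structure of $\operatorname{Pic}(Y)$ as a torsion-free group, rather than the (equally standard) fact that numerical and linear equivalence coincide on these rational varieties. One small remark: your closing caveat about "excluding the case $M\cong\mathcal E$" is superfluous, since the determinant argument never requires matching the summands of the two decompositions -- it yields $M^{\otimes 2}\cong\mathcal O_Y$ directly, and torsion-freeness finishes the job without any case analysis.
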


\begin{proof}
Since $X$ is Calabi-Yau, 
$\pi^*(\omega_Y \otimes \mathcal E^{-1})=\mathcal O_X$, 
 so $\omega_Y \otimes \mathcal E^{-1}$ is numerically trivial. 
 Since 
 $Y$ is either projective space or a hyperquadric or 
 a projective bundle 
 over $\PP^1$, $\omega_Y \otimes \mathcal E^{-1}$ 
 is in fact trivial, so 
 $\mathcal E=\omega_Y$.
 
 \smallskip
 \noindent By the projection formula and the Leray's spectral 
 sequence, $H^i(\mathcal O_X)=H^i(\mathcal O_Y) 
 \oplus H^i(\mathcal E)$. If $1 \leq i \leq m-1$, then 
 $H^i(\mathcal O_Y)=0$ and, by Serre duality
 $H^i(\mathcal E)=H^i(\omega_Y)=H^{m-i}(\mathcal O_Y)=0$. 
 \end{proof}

\begin{proposition}\label{prop.cohomology.vanishing.CY}
Let $X$, $Y$, $\pi$ and $\mathcal E$ be as in notations \ref{setup} and 
\ref{notation.Y} and assume 
$X$ is Calabi-Yau (of dimension $m$, $m \geq 3$).
 Then $H^1(\mathcal T_Y \otimes \mathcal E)=0$. 
\end{proposition}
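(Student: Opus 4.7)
The plan is to reduce the claim to the vanishing of a single Hodge number. By Lemma~\ref{lemma.CY}, the Calabi--Yau hypothesis forces $\mathcal E = \omega_Y$, so what must be proved is $H^1(\mathcal T_Y \otimes \omega_Y) = 0$. The natural isomorphism $\wedge^{m-1} V \simeq V^\vee \otimes \det V$ for a rank--$m$ bundle $V$, applied to $V = \Omega_Y$, gives an isomorphism $\Omega_Y^{m-1} \simeq \mathcal T_Y \otimes \omega_Y$. Thus it suffices to show $h^{m-1,1}(Y) = 0$ for $m \geq 3$ in each of the three cases of Notation~\ref{notation.Y}.

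For $Y = \PP^m$, Bott's formula immediately yields $H^1(\Omega_{\PP^m}^{m-1}) = 0$ whenever $m \geq 3$; alternatively, twisting the Euler sequence of $\mathcal T_{\PP^m}$ by $\mathcal O(-m-1)$ and taking cohomology reduces the claim to the vanishings of $H^1(\mathcal O(-m))$ and $H^2(\mathcal O(-m-1))$, both of which belong to the intermediate cohomology of $\PP^m$ when $m \geq 3$. For a smooth hyperquadric $Y$ of dimension $m \geq 3$, one has $\omega_Y = \mathcal O_Y(-m\mathfrak h)$, so $\mathcal E$ corresponds to $\delta = -m \leq -3$ in the notation of Case~2 of Proposition~\ref{theorem.Fano}. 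The computation given there applies verbatim and yields the desired vanishing.

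For $Y$ a projective bundle of relative dimension $m-1$ over $\PP^1$, the cleanest path is the projective bundle formula for Hodge cohomology,
\begin{equation*}
h^{p,q}(Y) \;=\; \sum_{i=0}^{m-1} h^{p-i,\,q-i}(\PP^1).
\end{equation*}
Since $h^{a,b}(\PP^1) = 0$ unless $a = b \in \{0,1\}$, the summand $h^{m-1-i,\,1-i}(\PP^1)$ is nonzero only if $m-1-i = 1-i$, i.e.\ $m = 2$; as $m \geq 3$, the whole sum vanishes. If a self-contained sheaf-theoretic proof is preferred (in the style of Proposition~\ref{theorem.Fano}), one instead uses Serre duality together with $(\Omega_Y^{m-1})^\vee \otimes \omega_Y \simeq \Omega_Y$ to reduce to showing $H^{m-1}(\Omega_Y) = 0$, and then chases the relative tangent sequence \eqref{eq1} and the dual of the relative Euler sequence \eqref{eq2} via Leray for $p : Y \to \PP^1$, using that $\Omega_{\PP^1}^k = 0$ for $k \geq 2$ and that line bundles on $\PP^1$ have no higher cohomology beyond degree one.

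The main obstacle is the projective-bundle case, for the following reason: with $\mathcal E = \omega_Y$ one has $\alpha = m$ in the notation~\eqref{notation.Y.Fano}, which is exactly the boundary value \emph{excluded} by the Fano condition $\alpha < m$ used in Proposition~\ref{theorem.Fano}. Hence one cannot simply reuse that proposition's computation and must either invoke the Hodge-theoretic shortcut above or redo the bundle chase paying attention to this new numerical regime.
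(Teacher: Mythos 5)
Your proof is correct, and it takes a genuinely different route from the paper's. You first convert the statement into a Hodge-number computation via $\mathcal T_Y\otimes\omega_Y\simeq\Omega_Y^{m-1}$ (using Lemma~\ref{lemma.CY} to get $\mathcal E=\omega_Y$), and then kill $h^{m-1,1}(Y)$ by Bott for $\PP^m$, by re-citing Case 2 of Proposition~\ref{theorem.Fano} for the hyperquadric (legitimate, since $\delta=-m\leq -3$ is all that computation uses), and by the projective-bundle decomposition of Hodge cohomology for $Y=\PP(E_0)\to\PP^1$, where every summand $h^{m-1-i,1-i}(\PP^1)$ dies because $m\geq 3$. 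The paper instead works directly with $H^1(\mathcal T_Y\otimes\mathcal E)$ in each case: for $\PP^m$ via the Euler sequence and intermediate cohomology, for the hyperquadric via the normal-bundle sequence with Kodaira vanishing and Serre duality, and for the projective bundle via the sequences \eqref{eq1} and \eqref{eq2}, converting each needed vanishing by Serre duality (e.g.\ $h^1(\mathcal E\otimes\mathcal O_Y(H_0))=h^{m-1}(\mathcal O_Y(-H_0))$) and then using the projection formula and Leray --- which is essentially the computation underlying your ``self-contained'' alternative. Your diagnosis of the real issue is also accurate: with $\mathcal E=\omega_Y$ one has $\alpha=m$ in the notation~\eqref{notation.Y.Fano}, outside the range $\alpha<m$ treated in Proposition~\ref{theorem.Fano}, which is exactly why the paper redoes the projective-bundle case here rather than quoting that proposition; your Hodge-theoretic shortcut handles this regime uniformly and arguably more transparently, at the mild cost of invoking the Hodge decomposition of a projective bundle (harmless in characteristic $0$, and in fact available algebraically from the filtration of $\Omega^p_{\PP(E_0)}$ with graded pieces $p^*\Omega^{p-i}_{\PP^1}\otimes\Omega^i_{Y/\PP^1}$), whereas the paper stays within elementary line-bundle cohomology on $Y$.
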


\begin{proof}
 If $Y=\PP^m$, then $H^1(\mathcal E(1))=H^2(\mathcal E)=0$ by the 
 vanishing of the intermediate cohomology of line bundles in projective space. 
 In view of the Euler sequence for the tangent bundle of $\PP^m$, this 
 implies the vanishing of $H^1(\mathcal T_Y \otimes \mathcal E)$. 
 
  \smallskip
 \noindent If $Y$ is a hyperquadric, then it follows from 
 Lemma~\ref{lemma.CY} that $\mathcal E=\mathcal O_Y(-m\mathfrak h)$. 
 In view of exact sequence for the normal bundle of $Y$ in $\PP^{m+1}$, we need 
 to check the vanishings of $H^0(\mathcal O_Y((-m+2)\mathfrak{h}))$ and 
 $H^1(\mathcal T_{\PP^{m+1}}|_Y \otimes \mathcal E)$. The first happens because 
 $m \geq 3$. The second follows  from 
 the Euler sequence for the tangent bundle of $\PP^{m+1}$ restricted to $Y$, since 
 $H^1(\mathcal E(1))=0$ by the Kodaira vanishing theorem and 
 $h^2(\mathcal E)=h^{m-2}(\mathcal O_Y)=0$.

  \smallskip
 \noindent If $Y$ is a projective bundle over $\PP^1$, we use 
 Notation~\ref{notation.Y.Fano} and exact sequences~\eqref{eq1} and \eqref{eq2}. 
 Then it is enough to prove the vanishings of 
 $H^1(p^*\mathcal T_{\PP^1} \otimes \mathcal E)$, 
 $H^1(\mathcal E \otimes \mathcal O_Y(H_0))$,  
 $H^1(\mathcal E \otimes \mathcal O_Y(H_0 + e_1F)), \dots, 
 H^1(\mathcal E \otimes \mathcal O_Y(H_0 + e_{m+1}F))$ and $H^2(\mathcal E)$. 
 Then $h^1(p^*\mathcal T_{\PP^1} \otimes \mathcal E)=
 h^{m-1}(\mathcal O_Y(-2F))=0$ by the projection formula and the 
 Leray spectral sequence (recall $m \geq 3$). 
 In addition, 
 $h^1(\mathcal E \otimes \mathcal O_Y(H_0))=h^{m-1}(\mathcal O_Y(-H_0))=0$, 
 also by the projection formula and the 
 Leray spectral sequence. The vanishings of 
 $H^1(\mathcal E \otimes \mathcal O_Y(H_0 + e_1F)), \dots, 
 H^1(\mathcal E \otimes \mathcal O_Y(H_0 + e_{m+1}F))$ are argued analogously. 
 Finally, $h^2(\mathcal E)=h^{m-2}(\mathcal O_Y)=0$. 
\end{proof}

\noindent 
In analogy with Definition~\ref{defi.CY}, 
by a Calabi--Yau ribbon we mean a ribbon of dimension bigger than $2$, 
with trivial dualizing sheaf and such that the intermediate cohomology 
of its structure sheaf vanishes. 
We now deduce from Proposition~\ref{prop.cohomology.vanishing.CY} the non 
existence of nonsplit Calabi--Yau ribbons: 

\begin{corollary}\label{cor.CY.ribbons}
Let $Y$ be as in Notations~\ref{setup} (2) and \ref{notation.Y}.
There are no nonsplit Calabi--Yau ribbons  
on $Y$.
\end{corollary}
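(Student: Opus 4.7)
The plan is to imitate the proof of Corollary~\ref{cor.Fano.ribbons}. Let $\widetilde{Y}$ be a nonsplit Calabi--Yau ribbon supported on $Y$, and let $\widetilde{\mathcal{E}}$ be its conormal bundle. By the standard ribbon formula $\omega_{\widetilde{Y}}|_Y = \omega_Y \otimes \widetilde{\mathcal{E}}^{-1}$ (see \cite[Lemma 1.4]{Enriques}), the triviality of $\omega_{\widetilde{Y}}$ forces $\omega_Y \otimes \widetilde{\mathcal{E}}^{-1} = \mathcal{O}_Y$, hence $\widetilde{\mathcal{E}} = \omega_Y$. This is the ribbon analogue of the first part of Lemma~\ref{lemma.CY}, obtained without having to produce any double cover $X$.

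Next, I would invoke the cohomological computation in Proposition~\ref{prop.cohomology.vanishing.CY} to conclude $\mathrm{Ext}^1(\Omega_Y, \widetilde{\mathcal{E}})=H^1(\mathcal{T}_Y \otimes \widetilde{\mathcal{E}})=0$. The key observation is that the proof of Proposition~\ref{prop.cohomology.vanishing.CY} never actually uses the existence of a smooth (or any) double cover $\pi\colon X\to Y$: it only uses the identity $\mathcal{E} = \omega_Y$ derived from Lemma~\ref{lemma.CY}, followed by standard twisting and vanishing arguments on each of the three model varieties in Notation~\ref{notation.Y} (using the vanishing of intermediate cohomology of line bundles on $\PP^m$, Kodaira vanishing and Serre duality on the quadric, and the projection formula plus the Leray spectral sequence on the projective bundle, all of which only require $m \geq 3$). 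Running the same case analysis with $\widetilde{\mathcal{E}}$ in place of $\mathcal{E}$ yields the desired vanishing.

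Finally, by \cite[Corollary 1.4]{BE}, nonsplit ribbon structures on $Y$ with conormal bundle $\widetilde{\mathcal{E}}$ correspond bijectively, modulo scalars, to nonzero elements of $\mathrm{Ext}^1(\Omega_Y, \widetilde{\mathcal{E}})$. Since this group vanishes for $\widetilde{\mathcal{E}} = \omega_Y$, no nonsplit Calabi--Yau ribbon supported on $Y$ exists.

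The main (and essentially the only) subtlety is the verification at the second step that the proof of Proposition~\ref{prop.cohomology.vanishing.CY} goes through purely formally once one has $\widetilde{\mathcal{E}} = \omega_Y$, i.e., that neither the smoothness of $X$ nor the mere existence of the cover plays a genuine role in the cohomological estimates; the rest of the argument is a direct appeal to \cite[Corollary 1.4]{BE}.
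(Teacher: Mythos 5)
Your proposal is correct and follows essentially the same route as the paper: identify $\widetilde{\mathcal E}=\omega_Y$ from triviality of the dualizing sheaf (the paper cites the argument of \cite[Proposition 1.5]{Enriques} for this step, which amounts to the same restriction formula you use), then observe that the vanishing $H^1(\mathcal T_Y \otimes \omega_Y)=0$ of Proposition~\ref{prop.cohomology.vanishing.CY} needs no double cover, and conclude by \cite[Corollary 1.4]{BE}. Your explicit remark that the proposition's proof only uses $\mathcal E=\omega_Y$ is exactly the point the paper leaves implicit (and makes explicit in the analogous Fano case, Corollary~\ref{cor.Fano.ribbons}).
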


\begin{proof}
Let $\widetilde Y$ be a ribbon supported on $Y$ and let 
$\widetilde{\mathcal E}$ be the conormal bundle of $Y$ in $\widetilde Y$. 
The same argument of the proof of \cite[Proposition 1.5]{Enriques}
implies that $\widetilde Y$ is Calabi--Yau if and only if 
$\widetilde{\mathcal E}=\omega_Y$.  
Then the result follows from Proposition~\ref{prop.cohomology.vanishing.CY} and 
\cite[Corollary 1.4]{BE}.
\end{proof}

\begin{theorem}\label{thm.deformation.CY}
{Let $X$, $Y$, $\pi$ and  $\varphi$ be as in 
notations \ref{setup} and \ref{notation.Y}. Let 
$X$ be a  Calabi-Yau variety of dimension $m$, $m \geq 3$
and let the morphism $\varphi$  
be  
induced by a complete linear series 
(i.e., $H^0(\mathcal E(1))=0$).
Then 
any deformation of $\varphi$
is a finite morphism of degree
 $2$ onto its image, which is a deformation of $i(Y)$ in $\PP^N$.} 
\end{theorem}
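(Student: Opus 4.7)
The plan is to deduce the theorem from Theorem~\ref{Psi2=0.3} by verifying its five hypotheses, much as was done for Theorem~\ref{thm.Fano.deform}. The crucial preliminary observation is that, by Lemma~\ref{lemma.CY}, the trace--zero module is forced to be $\mathcal E = \omega_Y$, so every cohomological condition we need to check reduces to a statement about $\omega_Y$ and its powers on the three types of varieties listed in Notation~\ref{notation.Y}.

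First I would dispose of hypotheses (1), (3) and (4) of Theorem~\ref{Psi2=0.3}. For all $Y$ as in Notation~\ref{notation.Y} one has $H^1(\mathcal O_Y) = H^2(\mathcal O_Y) = 0$ and $H^1(\mathcal N_{i(Y),\mathbf P^N}) = 0$; these are standard facts (for projective bundles over $\mathbf P^1$ they are well known, and for $\mathbf P^m$ and smooth hyperquadrics they follow from the usual computation of cohomology in projective space). The vanishing $H^2(\mathcal O_Y) = 0$ makes the obstruction map to lifting $\mathcal E$ automatically zero, so (1) holds. The unobstructedness of $Y$ in $\mathbf P^N$ needed for (4) is classical for these three types of varieties.

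For hypothesis (2), one must show $H^1(\mathcal E^{-2}) = H^1(\omega_Y^{-2}) = 0$. For $Y = \mathbf P^m$ this is immediate since $\omega_Y^{-2} = \mathcal O_{\mathbf P^m}(2m+2)$. For a smooth hyperquadric of dimension $m \geq 3$ it follows from taking cohomology in a suitable twist of~\eqref{eq.presentation.structure.sheaf.hyperquadric}, using the vanishing of intermediate cohomology on $\mathbf P^{m+1}$. For a projective bundle $Y = \mathbf P(E_0)$ over $\mathbf P^1$, the projection formula and the Leray spectral sequence reduce the problem to the vanishing of $H^1$ of a sum of line bundles on $\mathbf P^1$, which can be checked directly from the form $\omega_Y^{-2} \sim 2mH_0 + 2(e_1 + \cdots + e_{m-1} + 2)F$ (since $2m \geq 6$, all summands of $p_*(\omega_Y^{-2})$ have non--negative degree).

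Finally, for hypothesis (5) we must show $\Psi_2 = 0$, or equivalently $\mathrm{Hom}(\mathcal I/\mathcal I^2, \mathcal E) = 0$. I would run exactly the argument of Corollary~\ref{cor.Fano.Hom}: take cohomology in the conormal sequence~\eqref{eq.cor.Fano} and reduce the problem to two vanishings. The first, $H^0(\mathcal T_{\mathbf P^N}|_{i(Y)} \otimes \mathcal E) = 0$, follows from the restricted Euler sequence together with $H^0(\mathcal E(1)) = 0$ (the hypothesis that $\varphi$ is induced by a complete linear series) and $H^1(\mathcal E) = H^1(\omega_Y) = H^{m-1}(\mathcal O_Y)^{\vee} = 0$ (Serre duality and Lemma~\ref{lemma.CY}). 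The second, $\mathrm{Ext}^1(\Omega_Y, \mathcal E) = H^1(\mathcal T_Y \otimes \mathcal E) = 0$, is precisely the content of Proposition~\ref{prop.cohomology.vanishing.CY}. Once all five hypotheses are in hand, Theorem~\ref{Psi2=0.3} applies and gives simultaneously the unobstructedness of $\varphi$ and the conclusion that every deformation of $\varphi$ is a finite degree--$2$ morphism onto a deformation of $i(Y)$ in $\mathbf P^N$. The only step requiring any genuine work is hypothesis (2) in the projective--bundle case, and even there the required cohomology is harmless because $\omega_Y^{-2}$ is sufficiently positive in the fibre direction.
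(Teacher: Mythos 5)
Your reduction to Theorem~\ref{Psi2=0.3} breaks down at hypothesis (2) when $Y$ is a projective bundle over $\PP^1$, and this is precisely the case the paper's proof is designed to avoid. With the paper's normalization (Notation \eqref{notation.Y.Fano}), $E_0=\mathcal O_{\PP^1}\oplus\mathcal O_{\PP^1}(-e_1)\oplus\cdots\oplus\mathcal O_{\PP^1}(-e_{m-1})$ and $p_*\mathcal O_Y(H_0)=E_0$, so the summands of $p_*(\omega_Y^{-2})=S^{2m}E_0\otimes\mathcal O_{\PP^1}(2e_1+\cdots+2e_{m-1}+4)$ have degrees going down to $2(e_1+\cdots+e_{m-1})+4-2me_{m-1}$, not ``non--negative degree'' as you claim (that would be the computation for $E_0^\vee$, i.e.\ the opposite convention for $H_0$; note the paper's ampleness criterion $b>ae_{m-1}$ for $aH_0+bF$). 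Consequently $H^1(\mathcal E^{-2})=H^1(\omega_Y^{-2})$ vanishes exactly when $e_1+\cdots+e_{m-2}+2\geq (m-1)e_{m-1}$, i.e.\ when $-K_Y$ is base--point--free, and this can genuinely fail for Calabi--Yau double covers: for instance $m=3$, $(e_1,e_2)=(1,2)$ gives $-2K_Y\sim 6H_0+10F$, whose general member is smooth (the base locus is the section $\{x_0=x_1=0\}$, along which a general member has multiplicity one because of the monomial $x_0x_2^5$), while $H^1(\omega_Y^{-2})\neq 0$. So your argument only proves the theorem under an extra positivity assumption on $Y$; this is exactly the content of Remark~\ref{remark.CY}, where the Theorem~\ref{Psi2=0.3} route is carried out under the hypothesis that $-K_Y$ is base--point--free (and then yields unobstructedness as a bonus). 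The rest of your verifications ((1), (3), (4), and (5) via Proposition~\ref{prop.cohomology.vanishing.CY} and the Euler-sequence argument of Corollary~\ref{cor.Fano.Hom}) are fine.

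The paper's actual proof takes a different route that needs no hypothesis on $\mathcal E^{-2}$: given an arbitrary deformation $(\mathcal X,\Phi)$ of $(X,\varphi)$, the vanishing $H^1(\mathcal T_Y\otimes\mathcal E)=0$ of Proposition~\ref{prop.cohomology.vanishing.CY} together with \cite[Corollary 1.11]{Wehler} and \cite[Theorem 8.1]{Horikawa} produces (as in Proposition~\ref{prop.Fano.deform.lK}) a finite degree--$2$ deformation $\Pi$ of $\pi$ on the same total space $\mathcal X$ and a deformation $\mathfrak i$ of the embedding $i$, hence a second deformation $\Phi'=\mathfrak i\circ\Pi$ of $\varphi$ which is of degree $2$. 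One then compares $\Phi$ with $\Phi'$: since $\omega_X=\mathcal O_X$ one cannot invoke Lemma~\ref{lemma.deformation.multiples.canonical} directly for $X$, so the paper restricts to a general member $\mathfrak X\in|L|$, which is a canonically polarized variety of general type with $|L|\big|_{\mathfrak X}=|K_{\mathfrak X}|$, applies Lemma~\ref{lemma.deformation.multiples.canonical} there to see that $\Phi$ and $\Phi'$ agree on $\mathfrak X$, and concludes that $\Phi$ is generically, hence (after shrinking the base) everywhere, finite of degree $2$ onto its image. To repair your write--up you would either have to add the base--point--freeness assumption of Remark~\ref{remark.CY} or switch to this comparison argument for the unbalanced scroll case.
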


\begin{proof}
Let $Z$ be a smooth algebraic variety
with a distinguished point $0$ and 
let $(\mathcal X, \Phi)$ be a flat family
over $Z$, with $(\mathcal X_0, \Phi_0)=(X, \varphi)$. 
Arguing as in the proof of Proposition~\ref{prop.Fano.deform.lK} 
we show the 
existence of a deformation $(\mathcal X,\Pi)$ of $(X,\pi)$, 
where $\Pi$
 is finite, surjective and of degree $2$, a deformation 
$(\mathcal Y, \mathfrak i)$ of $(Y,i)$ and a 
deformation $\Phi'$ of $\varphi$ such 
$\Phi'=\mathfrak i \circ \pi$. 
We now compare $\Phi$ and $\Phi'$. Let $\mathfrak X$ be 
a general member 
of $|L|$. By adjunction $\mathfrak X$ is of general type, 
$L|_{\mathfrak X}=K_\mathfrak X$ and, since 
$H^1(\mathcal O_X)=0$, the complete linear series 
$|L|$ restricts to the 
complete linear series $|L|_{\mathfrak X}|$. We apply
Lemma~\ref{lemma.deformation.multiples.canonical} to 
$(\mathfrak X, L|_{\mathfrak X})$. Then $\Phi$ and $\Phi'$ 
are the same 
when restricted to $\mathfrak X$, so $\Phi$ is generically of degree $2$ onto 
its image. Since $\pi$ is finite, shrinking $Z$ if necessary, so is 
$\Phi$ onto its image. Therefore, $\Phi_z$ is 
finite of degree $2$ onto 
its image, for all $z \in Z$. 
\end{proof}

\begin{remark}\label{remark.CY}
{\rm Assume $X$, $Y$ and $\varphi$ are as in Theorem~\ref{thm.deformation.CY} and 
that, if $Y$ is a projective bundle over $\PP^1$, then 
$-K_Y$ is base--point--free (i.e., 
$e_1 + \cdots + e_{m-2}
+ (1-m)e_{m-1} \geq -2$, with $e_1, \dots, e_{m-1}$ in  
Notation~\ref{notation.Y.Fano}). Then we can argue as in the proof of 
Theorem~\ref{thm.Fano.deform} and, 
using Theorem~\ref{Psi2=0.3}, conclude that $\varphi$ is unobstructed.}
\end{remark}

\noindent Although we already observed at the beginning of the section 
that 
next result follows from Remark~\ref{remark.deformation.varieties} (1), 
it is 
interesting  to see how it is deduced from the broader setting of 
Theorem~\ref{thm.deformation.CY}: 

\begin{corollary}\label{cor.CY}
 Let $X$ be a  Calabi--Yau variety 
 (of dimension $m$, $m \geq 3$) 
 and let $L$ be a polarization on $X$. 
If $(X,L)$ is hyperelliptic  and 
the image of $X$ by the morphism induced by $|L|$ is smooth, 
then any deformation of $(X,L)$ is hyperelliptic.
\end{corollary}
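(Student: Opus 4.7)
\emph{Proof proposal.} The plan is to mimic the strategy used in Theorems~\ref{thm.Fano-K3.deform.hyperelliptic} and \ref{thm.2g.deform.hyperelliptic}: upgrade a polarized deformation of $(X,L)$ to a deformation of the morphism $\varphi$ induced by $|L|$, and then feed this into Theorem~\ref{thm.deformation.CY}. Concretely, let $(\mathcal X,\mathcal L)$ be a flat family over a smooth base $Z$ with distinguished point $0 \in Z$ such that $(\mathcal X_0,\mathcal L_0)\simeq (X,L)$. Since $X$ is Calabi--Yau and $L$ is ample, Kodaira vanishing gives $H^1(L)=H^1(\omega_X\otimes L)=0$; by semicontinuity, after shrinking $Z$ we have $H^1(\mathcal L_z)=0$ and $h^0(\mathcal L_z)$ constant for all $z\in Z$. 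Thus the push--forward of $\mathcal L$ is locally free with formation commuting with base change (\cite[Chapitre III, \S 7]{EGA}), so $\mathcal L$ yields a $Z$--morphism $\Phi\colon \mathcal X\longrightarrow \mathbf P^N_Z$ with $\Phi_0=\varphi$, i.e., a deformation of $\varphi$.

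Next, I would check that $\varphi$ satisfies the hypothesis of Theorem~\ref{thm.deformation.CY}, namely that it is induced by a complete linear series (equivalently, $H^0(\mathcal E(1))=0$). Since $\varphi$ is itself induced by $|L|$ and factors through $i\circ \pi$, the image $i(Y)$ spans $\mathbf P^N$, so $h^0(\mathcal O_Y(1))=N+1=h^0(L)$; using the projection formula $\pi_*L=\mathcal O_Y(1)\oplus \mathcal E(1)$, this forces $H^0(\mathcal E(1))=0$. Theorem~\ref{thm.deformation.CY} then says that, after shrinking $Z$, each $\Phi_z$ is a finite morphism of degree~$2$ whose image is a deformation of $i(Y)$ inside $\mathbf P^N$.

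To finish, I would argue that the images $\Phi_z(\mathcal X_z)$ are still varieties of minimal degree. This is purely numerical: degree and dimension are locally constant in flat families of subschemes of $\mathbf P^N$, and so is the embedding dimension $N$; hence each fiber satisfies $\deg = N-m+1$, which is the minimal degree condition. Combined with the facts that $\Phi_z$ is induced by the complete linear series $|\mathcal L_z|$ (since $h^0(\mathcal L_z)=h^0(L)$ is constant), has degree~$2$ onto its image, and $\mathcal L_z^m=L^m$, this shows $(\mathcal X_z,\mathcal L_z)$ is hyperelliptic. The main technical point, rather than an obstacle, is the passage from polarized deformations of $(X,L)$ to morphism deformations of $\varphi$; this is exactly the step that requires Calabi--Yau vanishing to pin down $h^0(\mathcal L_z)$, and the rest is an immediate application of Theorem~\ref{thm.deformation.CY} together with the openness of ``minimal degree'' in embedded flat families.
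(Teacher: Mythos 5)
Your proposal is correct and is essentially the deduction the paper intends: the statement is placed as a corollary of Theorem~\ref{thm.deformation.CY}, and your steps (passing from a polarized deformation to a deformation of $\varphi$ via $H^1(L)=0$ and cohomology and base change, applying Theorem~\ref{thm.deformation.CY}, and then noting that the degree-$2$ images, being nondegenerate of constant degree and dimension, remain varieties of minimal degree) mirror exactly the concluding arguments of Theorems~\ref{thm.Fano-K3.deform.hyperelliptic} and~\ref{thm.2g.deform.hyperelliptic}. The paper leaves this deduction implicit, remarking only that it also follows more quickly from Remark~\ref{remark.deformation.varieties}(1), since adjunction gives $2g-2=(m-1)L^m>L^m$ for $m\geq 3$ and $h^0(\mathcal L_t)$ is constant by Kodaira vanishing.
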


\begin{example}
  {\rm If $(X,L)$ is a  
polarized Calabi--Yau threefold  
 with $L^3=8$ and $h^0(L)=7$, then, 
 by adjunction and Clifford's theorem, 
 $(X,L)$ is hyperelliptic. 
 Thus, if the image $i(Y)$ of the morphism induced by 
 $|L|$ is smooth, then $i(Y)$ is a rational normal scroll 
 $S(1,1,2)$ of $\PP^6$.
This suggests that 
 polarized Calabi--Yau threefolds $(X,L)$ with $L^3=8$ and $h^0(L)=7$ 
 are parametererized by an irreducible scheme 
 whose general point corresponds to a hyperelliptic 
 polarized Calabi--Yau threefold.} 
 \end{example}

\noindent Corollary~\ref{cor.CY} shows that a 
hyperelliptic polarized Calabi--Yau threefold $(X,L)$ only deforms to 
hyperelliptic polarized Calabi--Yau threefolds. 
If  the image of 
the morphism induced by $|L|$ is a smooth rational normal scroll, 
then $X$
is fibered by 
$K3$ surfaces (see \cite[Proposition 1.6]{CYthreefolds}); thus, in 
this case, 
any deformation of $X$ carries also the $K3$ fibration.
 This motivates the 
following question.

 \begin{question} 
{\rm Let $X$ be a Calabi--Yau threefold.
If $X$ carries a $K3$ fibration, 
does any deformation of $X$ carry the $K3$ fibration?
It is tempting but, probably, too optimistic, to expect that 
such a Calabi--Yau threefold $X$ carries
a $K3$ fibration if and only if there exists a 
polarization $L$ on $X$ so that 
$(X,L)$ is hyperelliptic and 
the image of the morphism induced by $|L|$ is a smooth 
rational normal scroll. 
 If this expectation were true
then one would easily show that 
 the answer to our question is affirmative.}
\end{question}

\noindent Now we study the deformations of generalized hyperelliptic
polarized varieties of general type:

\begin{proposition}\label{prop.general.type}
Let $X$, $Y$, $\pi$ and $\mathcal E$ be as 
in notations \ref{setup} and 
\ref{notation.Y} and 
assume $X$ is a  
 minimal  variety of general type  (i.e., with $K_X$ ample)
 of dimension $m$, $m \geq 2$. 
 Then $H^1(\mathcal T_Y \otimes \mathcal E)=0$. 
\end{proposition}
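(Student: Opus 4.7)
The plan is to mirror the structure of the proof of Proposition~\ref{theorem.Fano}, since the statement here is the ``general type'' analogue of the Fano vanishing proved there. The crucial new input is that, because $K_X=\pi^{\ast}(\omega_Y\otimes\mathcal{E}^{-1})$ is ample and $\pi$ is finite and surjective, ampleness descends to $Y$: the line bundle $A:=\omega_Y\otimes\mathcal{E}^{-1}$ is itself ample on $Y$. This essentially reverses the numerical inequalities used in the Fano case, and ultimately simplifies the case analysis. My uniform reduction will be Serre duality,
\begin{equation*}
H^1(\mathcal{T}_Y\otimes\mathcal{E})\;\cong\;H^{m-1}(\Omega_Y\otimes A)^{\vee},
\end{equation*}
which reduces the problem to showing $H^{m-1}(\Omega_Y\otimes A)=0$ for every ample line bundle $A$ on $Y$. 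Note that Akizuki--Nakano vanishing applies strictly above the diagonal $p+q=m$, and we are sitting exactly on it, so a type-specific input for $Y$ is needed.

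For $Y=\PP^m$, the desired vanishing is Bott's formula: $H^{m-1}(\PP^m,\Omega^1(k))=0$ for every $k$ when $m\geq 3$, and for every $k\geq 1$ when $m=2$, which covers the ample case since $\omega_{\PP^2}=\mathcal{O}(-3)$. For $Y$ a hyperquadric, I would twist the conormal sequence of $Y\subset\PP^{m+1}$ and the restriction to $Y$ of the Euler sequence of $\PP^{m+1}$ by $A$, and combine Kodaira vanishing with the vanishings $H^i(\mathcal{O}_Y)=0$ for $0<i<m$ to kill the outer terms in the resulting long exact sequences. Alternatively, working directly with $\mathcal{T}_Y\otimes\mathcal{E}$ as in Proposition~\ref{theorem.Fano} reduces matters to $H^1(\mathcal{T}_{\PP^{m+1}}|_Y\otimes\mathcal{E})=0$ and $H^0(\mathcal{O}_Y((\delta+2)\mathfrak{h}))=0$, where $\mathcal{E}=\mathcal{O}_Y(\delta\mathfrak{h})$, and now the new numerical input $\delta\leq -m-1$ forces both automatically.

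The main obstacle is case (iii), where $Y=\PP(E_0)$ is a projective bundle over $\PP^1$. There I would follow the Fano proof and split the argument via the relative tangent sequence~\eqref{eq1} and the relative Euler sequence~\eqref{eq2}. With the notation of~\eqref{notation.Y.Fano} and writing $\mathcal{E}^{-2}=\mathcal{O}_Y(2\alpha H_0+2\beta F)$, ampleness of $\omega_Y\otimes\mathcal{E}^{-1}$ translates into
\begin{equation*}
\alpha\geq m+1 \quad\text{and}\quad \beta>e_1+\cdots+e_{m-1}+2+(\alpha-m)\,e_{m-1}.
\end{equation*}
The strict inequality $\alpha>m\geq 2$ forces $\alpha\geq 3$, so the annoying small-$\alpha$ case splits that appeared in the Fano proof disappear entirely. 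The argument then reduces to the three vanishings $H^1(p^{\ast}\mathcal{T}_{\PP^1}\otimes\mathcal{E})=0$, $H^1(p^{\ast}E_0^{\vee}\otimes\mathcal{O}_Y(H_0)\otimes\mathcal{E})=0$ and $H^2(\mathcal{E})=0$. Each of these, via the projection formula and the Leray spectral sequence (or, when $m=2$, via Serre duality on $Y$ followed by Kodaira vanishing for the resulting line bundle, whose ampleness is precisely the ampleness of $\omega_Y\otimes\mathcal{E}^{-1}$), becomes a statement about cohomology of line bundles on $\PP^1$ whose degrees lie in the range that the above inequalities on $\alpha$ and $\beta$ are designed to guarantee. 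The one delicate point is verifying that the ampleness bound on $\beta$ is strong enough to push every twist appearing in $p^{\ast}E_0^{\vee}\otimes\mathcal{O}_Y(H_0)$ into the vanishing range, which uses $e_1\leq\cdots\leq e_{m-1}$ to reduce all the inequalities to the extremal case.
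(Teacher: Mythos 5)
Your reduction works for $Y=\PP^m$, for the hyperquadric, and for projective bundles over $\PP^1$ when $m\geq 3$, but it breaks down exactly in the surface case $m=2$, which the proposition must cover (it is the case of canonical double covers of Hirzebruch surfaces). The problem is your third vanishing: $H^2(\mathcal E)=0$ is \emph{false} when $Y$ is a Hirzebruch surface. By Serre duality $H^2(\mathcal E)\cong H^0(\omega_Y\otimes\mathcal E^{-1})^{\vee}$, and $\omega_Y\otimes\mathcal E^{-1}$ is precisely the ample line bundle $A$ you start from; an ample line bundle on a rational surface is effective (indeed $h^0(aH_0+bF)>0$ whenever $a>0$ and $b>ae$), so $h^2(\mathcal E)>0$. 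Your parenthetical fallback for $m=2$ (``Serre duality followed by Kodaira vanishing'') cannot rescue this, because after dualizing you are asking for the vanishing of an $H^0$, about which Kodaira says nothing --- and which is genuinely nonzero. (For the same reason, in step (a) the Serre-dual bundle $\alpha H_0+(\beta-2)F$ need not even be nef when $e\geq 2$, so the Kodaira route is shaky there too; fortunately for (a) and (b) the projection-formula/Leray computation you also propose does go through for $m=2$, since the only contribution is $H^0(\PP^1,R^1p_*(\cdot))$ and the ampleness bound $\beta>(\alpha-1)e+2$ kills it.) You flagged (b) as the delicate point, but the actual failure is in (c).

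The statement itself survives: for $m=2$ one should simply avoid $H^2(\mathcal E)$ altogether by noting that $\mathcal T_{Y/\PP^1}\otimes\mathcal E$ is the line bundle $\mathcal O_Y((2-\alpha)H_0+(e-\beta)F)$, whose $H^1$ vanishes by the same pushforward computation ($R^1p_*\mathcal O_Y((2-\alpha)H_0)=\bigoplus_{j=1}^{\alpha-3}\mathcal O_{\PP^1}(je)$, and all twists $je+e-\beta$ are negative by the ampleness bound on $\beta$); then sequence~\eqref{eq1} alone gives the claim, with no appeal to~\eqref{eq2}. With that repair, your argument --- Serre duality plus Bott for $\PP^m$, the quadric computation with $\delta\leq -m-1$, and the $\alpha\geq m+1$ analysis on the bundle --- is a correct and self-contained alternative. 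Note that it is genuinely different from the paper's proof, which is a one-line reduction: since $\omega_X=\pi^{*}(\omega_Y\otimes\mathcal E^{-1})$ is ample and $\pi$ is finite, $\mathcal E'=\omega_Y\otimes\mathcal E^{-1}$ is ample, hence very ample on these particular $Y$, and the vanishing $H^1(\mathcal T_Y\otimes\omega_Y\otimes\mathcal E'^{-1})=0$ is quoted from \cite[Propositions 1.4, 1.5, 1.6]{canonical.double.covers.varieties} and \cite[Proposition 1.7]{canonical.double.covers.surfaces}; your route buys independence from those references at the cost of redoing (carefully, including the $m=2$ case) the cohomology of the bundle case.
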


\begin{proof}
Let $\mathcal E'=\omega_Y \otimes \mathcal E^{-1}$. 
Recall that $\omega_X=\pi^*\mathcal E'$, 
so $\mathcal E'$ is ample. Any ample line bundle on $Y$ is very ample, so $\mathcal E'$ is very ample. 
Then $\mathcal E=\omega_Y \otimes \mathcal E'^{-1}$ and in the proofs of
\cite[Propositions 1.4, 1.5, 1.6]{canonical.double.covers.varieties} (see also 
\cite[Proposition 1.7]{canonical.double.covers.surfaces}) we showed $H^1(\mathcal T_Y \otimes \omega_Y
\otimes \mathcal E'^{-1})=0$. 
\end{proof}

\begin{remark}\label{remark.X.regular} 
 {\rm Under the hypotheses of Proposition~\ref{prop.general.type}, 
 the variety $X$ is regular. Indeed, 
 $\pi_*(\mathcal O_X)=\mathcal O_Y \oplus \mathcal E$. 
 We have $H^1(\mathcal O_Y)=0$. We see that 
 \begin{equation}\label{eq.E.nonspecial}
   H^1(\mathcal E)=0
 \end{equation}
also. 
 If 
 $Y=\PP^m$, then $H^1(\mathcal E)$ vanishes 
 because of the vanishing of intermediate cohomology in 
 $\PP^m$. If $Y$ is a (smooth) hyperquadric and $m \geq 3$, 
 the vanishing of $H^1(\mathcal E)$
 follows from sequence~\eqref{eq.presentation.structure.sheaf.hyperquadric} 
and the vanishing of intermediate cohomology in 
 $\PP^N$. If $Y$ is a projective bundle on $\PP^1$, 
 then $h^1(\mathcal E)=h^{m-1}(\mathcal E')$, with 
 $\mathcal E'$ ample. Then the vanishing follows 
 by the Leray's spectral sequence and the projection formula.}
\end{remark}

\noindent We say that a ribbon of dimension greater than or equal to $2$
is a minimal ribbon  of general type if its dualizing sheaf is ample.  
From Proposition~\ref{prop.general.type} we deduce the non 
existence of nonsplit minimal ribbon  of general type 
supported on $Y$ as in Notation~\ref{notation.Y}: 

\begin{corollary}\label{cor.general.type.ribbons}
Let $Y$ be as in Notation~\ref{notation.Y}.
There are no nonsplit minimal ribbon  of general type 
on $Y$.
\end{corollary}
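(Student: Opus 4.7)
The plan is to adapt the strategy of Corollaries~\ref{cor.Fano.ribbons} and~\ref{cor.CY.ribbons}, replacing the cohomology vanishing of Proposition~\ref{theorem.Fano} (respectively Proposition~\ref{prop.cohomology.vanishing.CY}) by that of Proposition~\ref{prop.general.type}.

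Suppose, for contradiction, that $\widetilde Y$ is a nonsplit minimal ribbon of general type supported on $Y$, with conormal bundle $\widetilde{\mathcal E}$. The first step is to show that ampleness of the dualizing sheaf $\omega_{\widetilde Y}$ forces $\omega_Y \otimes \widetilde{\mathcal E}^{-1}$ to be ample on $Y$. This is the ribbon analog of the identity $\omega_X = \pi^*(\omega_Y \otimes \mathcal E^{-1})$ used for double covers, and is obtained exactly as in the proof of \cite[Proposition 1.5]{Enriques} (cited in the Calabi--Yau case, Corollary~\ref{cor.CY.ribbons}) or \cite[Lemma 1.4]{Enriques} (cited in the Fano case, Corollary~\ref{cor.Fano.ribbons}): the dualizing sheaf of a ribbon restricts along $Y$ to $\omega_Y \otimes \widetilde{\mathcal E}^{-1}$, and ampleness on a nilpotent thickening is detected on the reduction.

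Next I would revisit the proof of Proposition~\ref{prop.general.type} and observe that it uses the hypothesis ``$X$ is a minimal variety of general type'' only through the resulting fact that $\omega_Y \otimes \mathcal E^{-1}$ is ample, hence very ample (since on the varieties $Y$ listed in Notation~\ref{notation.Y} every ample line bundle is very ample). The existence of a double cover $\pi$ and of a smooth member of $|\mathcal E^{-2}|$ play no role in the vanishing of $H^1(\mathcal T_Y \otimes \mathcal E)$, which really only invokes \cite[Propositions 1.4, 1.5, 1.6]{canonical.double.covers.varieties}. Applying that vanishing with $\widetilde{\mathcal E}$ in place of $\mathcal E$, I would conclude that $\mathrm{Ext}^1(\Omega_Y, \widetilde{\mathcal E}) = H^1(\mathcal T_Y \otimes \widetilde{\mathcal E}) = 0$, where the first equality uses that $\Omega_Y$ is locally free.

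Finally, by \cite[Corollary 1.4]{BE}, the isomorphism classes of nonsplit ribbons on $Y$ with conormal bundle $\widetilde{\mathcal E}$ are parameterized by the projectivization of $\mathrm{Ext}^1(\Omega_Y, \widetilde{\mathcal E})$, which is empty by the previous step; this contradicts the existence of $\widetilde Y$. The only technical point that requires any care is the first step, the passage from ampleness of $\omega_{\widetilde Y}$ to ampleness of $\omega_Y \otimes \widetilde{\mathcal E}^{-1}$; the rest is a direct invocation of the cohomology vanishing already proved in the paper.
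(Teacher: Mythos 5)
Your proposal is correct and follows essentially the same route as the paper: ampleness of $\omega_Y \otimes \widetilde{\mathcal E}^{-1}$ (hence very ampleness) via \cite[Lemma 1.4]{Enriques}, the vanishing $\mathrm{Ext}^1(\Omega_Y, \widetilde{\mathcal E}) = H^1(\mathcal T_Y \otimes \widetilde{\mathcal E}) = 0$ via \cite[Propositions 1.4, 1.5, 1.6]{canonical.double.covers.varieties}, and the conclusion via \cite[Corollary 1.4]{BE}. Your explicit remark that the double cover and the smooth branch divisor play no role in Proposition~\ref{prop.general.type} is precisely why the paper cites the external vanishing directly rather than the proposition, so the two arguments coincide.
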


\begin{proof}
Let $\widetilde Y$ be a minimal ribbon  of general type supported on $Y$ 
and let 
$\widetilde{\mathcal E}$ be the conormal bundle of $Y$ in $\widetilde Y$. 
Since $\omega_{\widetilde Y}$ is ample, by 
\cite[Lemma 1.4]{Enriques}, so is $\omega_Y \otimes {\widetilde{\mathcal E}}^{-1}$ 
and, 
in fact, very ample.  Then 
$\mathrm{Ext}^1(\Omega_Y, \widetilde{\mathcal E})=H^1(\mathcal T_Y \otimes 
\widetilde{\mathcal E})=0$ by 
\cite[Propositions 1.4, 1.5, 1.6]{canonical.double.covers.varieties}
so  the result follows from 
\cite[Corollary 1.4]{BE}.
\end{proof}

\begin{corollary}\label{cor.Hom.general.type}
Let $X$, $Y$, $\varphi$, $\mathcal E$ and $\mathcal I$ be as in 
notations \ref{setup} and \ref{notation.Y}. 
Assume $X$ is a
 minimal  variety of general type  (i.e., with $K_X$ ample)
 of dimension $m$, $m \geq 2$
and $\varphi$ is  induced by a complete linear series
(i.e., $H^0(\mathcal E(1))=0$). 
 Then 
 $\mathrm{Hom}(\mathcal I/\mathcal I^2, \mathcal E)=0$.
\end{corollary}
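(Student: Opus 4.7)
The plan is to follow exactly the strategy of Corollary~\ref{cor.Fano.Hom}, substituting the cohomological inputs available in the general type setting. First I would take cohomology on the conormal sequence of $i(Y)$ in $\PP^N$ twisted by $\mathcal E$, obtaining the exact sequence
\begin{equation*}
\mathrm{Hom}(\Omega_{\PP^N}|_{i(Y)}, \mathcal E)
\longrightarrow
\mathrm{Hom}(\mathcal I/\mathcal I^2, \mathcal E)
\longrightarrow \mathrm{Ext}^1(\Omega_Y,\mathcal E).
\end{equation*}
The right--hand term equals $H^1(\mathcal T_Y \otimes \mathcal E)$, which vanishes by Proposition~\ref{prop.general.type}. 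Hence it suffices to prove the vanishing of the left--hand term, namely $H^0(\mathcal T_{\PP^N}|_{i(Y)} \otimes \mathcal E)$.

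To handle this, I would restrict the Euler sequence of $\PP^N$ to $i(Y)$ and tensor with $\mathcal E$, producing
\begin{equation*}
0 \longrightarrow \mathcal E \longrightarrow \mathcal E(1)^{\oplus N+1} \longrightarrow \mathcal T_{\PP^N}|_{i(Y)} \otimes \mathcal E \longrightarrow 0.
\end{equation*}
Taking cohomology reduces the problem to showing $H^0(\mathcal E(1))=0$ and $H^1(\mathcal E)=0$. The first of these is precisely the hypothesis that $\varphi$ is induced by a complete linear series (since $\varphi$ factors through $\pi$ and $\pi_*\mathcal O_X = \mathcal O_Y \oplus \mathcal E$, so the cokernel of $H^0(\mathcal O_Y(1)) \hookrightarrow H^0(L)$ is $H^0(\mathcal E(1))$). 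The second vanishing is exactly the content of \eqref{eq.E.nonspecial} in Remark~\ref{remark.X.regular}, which treats the three cases for $Y$ separately.

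Combining these inputs yields $H^0(\mathcal T_{\PP^N}|_{i(Y)} \otimes \mathcal E) = 0$ and therefore $\mathrm{Hom}(\mathcal I/\mathcal I^2, \mathcal E) = 0$. There is no serious obstacle here: all the nontrivial cohomology vanishing has already been done in Proposition~\ref{prop.general.type} and Remark~\ref{remark.X.regular}, and the proof is a direct parallel to Corollary~\ref{cor.Fano.Hom}, with the pleasant simplification that no exceptional case (analogous to the hyperquadric with $\mathcal E = \mathcal O_Y(-2\mathfrak h)$) arises, because in the general type setting ampleness of $\omega_Y \otimes \mathcal E^{-1}$ combined with $H^0(\mathcal E(1))=0$ does not force $\mathcal I/\mathcal I^2 \simeq \mathcal E$.
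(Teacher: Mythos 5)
Your proof is correct and follows essentially the same route as the paper: the paper's own argument cites exactly the exact sequence \eqref{eq.cor.Fano}, the hypothesis $H^0(\mathcal E(1))=0$, the vanishing $H^1(\mathcal E)=0$ from \eqref{eq.E.nonspecial}, and Proposition~\ref{prop.general.type}. Your write-up merely makes explicit the Euler-sequence step that the paper leaves implicit (borrowed from the proof of Corollary~\ref{cor.Fano.Hom}), so there is nothing to add.
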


\begin{proof}
The result follows from exact sequence~\eqref{eq.cor.Fano}, the vanishing of 
$H^0(\mathcal E(1))$ (this is because $\varphi$ 
 factors through $\pi$ and 
 $\varphi$ is induced by a complete linear series), 
 \eqref{eq.E.nonspecial}
 and Proposition~\ref{prop.general.type}. 
 \end{proof}

\begin{theorem}\label{thm.deformation.general.type}
Let $X$, $Y$ and  $\varphi$ be as in 
notations \ref{setup} and \ref{notation.Y}. 
Assume $X$ is a  
 minimal  variety of general type  
 of dimension $m$, $m \geq 2$
and $\varphi$ is  induced by a complete linear series.
If $Y$ is a projective bundle over $\PP^1$, assume 
furthermore $B$ is base--point--free. 
Then $\varphi$ is unobstructed and any deformation of $\varphi$
is a finite morphism of degree
 $2$ onto its image, which is a deformation of $i(Y)$ in $\PP^N$. 
\end{theorem}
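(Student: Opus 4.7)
The plan is to deduce the result directly from Theorem~\ref{Psi2=0.3} by verifying its five hypotheses in the present setting. Hypothesis~(5), $\Psi_2=0$, is exactly the content of Corollary~\ref{cor.Hom.general.type}, which applies because $X$ is minimal of general type and $\varphi$ is induced by a complete linear series. Hypothesis~(3), $H^1(\mathcal O_Y)=0$, is standard for all three types of $Y$ permitted by Notation~\ref{notation.Y}: for a projective bundle $p\colon Y\to\PP^1$ it follows from Leray together with $p_*\mathcal O_Y=\mathcal O_{\PP^1}$ and $R^ip_*\mathcal O_Y=0$ for $i>0$. The same Leray computation, combined with the analogous facts for $\PP^m$ and for a smooth hyperquadric of dimension at least three, also gives $H^2(\mathcal O_Y)=0$, and this forces hypothesis~(1) trivially: the map $H^1(\mathcal T_Y)\to H^2(\mathcal O_Y)$ induced by $c_1(\mathcal E)$ automatically vanishes. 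Hypothesis~(4), that $Y$ is unobstructed in $\PP^N$, is classical for each of the three possibilities for $Y$.

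The substantive point is hypothesis~(2), $H^1(\mathcal E^{-2})=0$. Since $X$ is minimal of general type, $\omega_X=\pi^*(\omega_Y\otimes\mathcal E^{-1})$ is ample, hence $\omega_Y\otimes\mathcal E^{-1}$ is ample on $Y$. If $Y=\PP^m$ or $Y$ is a hyperquadric, this forces $\mathcal E^{-2}$ to be a positive multiple of the hyperplane bundle, so $H^1(\mathcal E^{-2})=0$ by the standard cohomology of projective space or of the smooth hyperquadric. If $Y$ is a projective bundle over $\PP^1$, write $\mathcal E^{-2}=\mathcal O_Y(B)=\mathcal O_Y(2\alpha H_0+2\beta F)$ with the notation of \eqref{notation.Y.Fano}; base--point--freeness of $B$ is equivalent to $\alpha\geq0$ and $\beta\geq\alpha e_{m-1}$, exactly as used in the proof of Theorem~\ref{thm.Fano.deform}. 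Then by the projection formula and Leray,
\begin{equation*}
H^1(\mathcal O_Y(B))\;=\;H^1\bigl(\PP^1,\,p_*\mathcal O_Y(2\alpha H_0)\otimes\mathcal O_{\PP^1}(2\beta)\bigr),
\end{equation*}
and each summand of $p_*\mathcal O_Y(2\alpha H_0)\otimes\mathcal O_{\PP^1}(2\beta)$ is a line bundle on $\PP^1$ of degree at least $2\beta-2\alpha e_{m-1}\geq0$, so every summand has vanishing $H^1$.

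With all five hypotheses verified, Theorem~\ref{Psi2=0.3} delivers both conclusions simultaneously: $\varphi$ is unobstructed, and every deformation of $\varphi$ is a finite morphism of degree~$2$ onto its image, which is itself a deformation of $i(Y)$ in $\PP^N$. The only nontrivial technical point is hypothesis~(2) in the projective bundle case; once the base--point--free assumption on $B$ is translated into the numerical inequality on $\alpha$, $\beta$ and $e_{m-1}$, the Leray argument is routine. Everything else is orchestration of results---most importantly Corollary~\ref{cor.Hom.general.type} for hypothesis~(5)---already established earlier in the paper.
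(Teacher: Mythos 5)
Your proposal is correct and follows essentially the same route as the paper: both verify the five hypotheses of Theorem~\ref{Psi2=0.3}, obtaining hypothesis~(5) from Corollary~\ref{cor.Hom.general.type} and hypothesis~(2) from the vanishing of cohomology on $\PP^m$ or the hyperquadric, respectively from the numerical condition $\beta \geq \alpha e_{m-1}$ forced by base--point--freeness of $B$ in the projective bundle case. The only difference is that you spell out the Leray computation and the (harmless, though not strictly needed) ampleness of $\omega_Y\otimes\mathcal E^{-1}$, where the paper simply cites the verifications already made in the proof of Theorem~\ref{thm.Fano.deform}.
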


\begin{proof}
As seen in the proof of 
Theorem~\ref{thm.Fano.deform}, 
 hypotheses (1), (3) and (4) of 
 Theorem~\ref{Psi2=0.3} are satisfied. 
 If $Y$ is $\PP^m$ or a hyperquadric, then 
 $H^1(\mathcal E^{-2})=0$ because of the vanishing of 
 cohomology in projective space.
 If $Y$ is a projective bundle over $\PP^1$, then 
$B$ being base--point--free implies  \linebreak
$\beta \geq \alpha e_{m-1}$,
 with 
 $\alpha, \beta$ and $e_{m-1}$ as in \eqref{notation.Y.Fano}, so  
 $H^1(\mathcal E^{-2})=0$. Thus hypothesis (2) 
 of Theorem~\ref{Psi2=0.3} is also satisfied.
 Finally, hypothesis (5) of Theorem~\ref{Psi2=0.3}
 follows from Corollary~\ref{cor.Hom.general.type}.
\end{proof}

\begin{remark}
 {\rm The divisor $B$ being base--point--free in the statement of 
 Theorem~\ref{thm.deformation.general.type}
 is not a very restrictive condition. 
Indeed, $\mathcal E^{-2}= \omega_Y^{-2} \otimes \mathcal E'$, 
 where $\mathcal E'$ is ample. If $Y$ is ``balanced'', e.g. 
 if $e_1=\cdots = e_{m-1}=0$, then $\omega_Y^{-2}$ is ample and free, 
 and so is $\mathcal E^{-2}$.}
\end{remark}

\noindent The cases dealt with in the next proposition 
have already been covered by Theorem~\ref{thm.deformation.general.type}
except maybe 
if $B$ is not base--point--free in the statement
of Theorem~\ref{thm.deformation.general.type}; that
is why we include it here.  
The proof is the same as the proof of 
Proposition~\ref{prop.Fano.deform.lK}.

\begin{proposition}\label{prop.general.type.deform.lK}
 Let $X$, $Y$ and  $\varphi$ be as in 
notations \ref{setup} and \ref{notation.Y}. Let 
$X$ be a  minimal  variety of general type
and let the morphism $\varphi$ from $X$ to $\PP^N$ be  
induced by a complete linear series $|L|$, where 
$L=lK_X$ for some $l \in \NN$.
Let $Y$ be a projective bundle over $\PP^1$.
Then any deformation of $\varphi$
is a finite morphism of degree
 $2$ onto its image, which is a deformation of $i(Y)$ in $\PP^N$.
\end{proposition}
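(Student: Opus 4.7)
The plan is to mimic verbatim the proof of Proposition~\ref{prop.Fano.deform.lK}, replacing the vanishing input coming from Proposition~\ref{theorem.Fano} by the corresponding one from Proposition~\ref{prop.general.type}. Let $(\mathcal X, \Phi)$ be a flat family over a smooth algebraic variety $Z$ with distinguished point $0$, such that $(\mathcal X_0, \Phi_0) = (X, \varphi)$. By Proposition~\ref{prop.general.type}, $H^1(\mathcal T_Y \otimes \mathcal E) = 0$, which by \cite[Corollary 1.11]{Wehler} (alternatively \cite[Theorem 8.1]{Horikawa}) allows one to equip $\mathcal X$, on some analytic neighborhood $U$ of $0$ (which, by an abuse of notation, I continue to call $Z$), with a deformation $\Pi$ of the double cover $\pi$ that is finite, surjective and of degree $2$ over its image $\mathcal Y$.

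Next, since $Y$ is a projective bundle over $\PP^1$, standard computations yield $H^1(\mathcal O_Y(1)) = H^2(\mathcal O_Y) = 0$; taking cohomology on the restriction of the Euler sequence of $\PP^N$ to $Y$ then gives $H^1(\mathcal T_{\PP^N}|_Y) = 0$. Invoking \cite[Theorem 8.1]{Horikawa} once again, the deformation $\mathcal Y$ of $Y$ lifts to a deformation $(\mathcal Y, \mathfrak i)$ of the embedded pair $(Y, i)$, and after shrinking $U$ we may assume $\mathfrak i$ is a relative embedding of $\mathcal Y$ in $\PP^N_U$. Composing yields a second deformation
\begin{equation*}
\Phi' \,=\, \mathfrak i \circ \Pi : \mathcal X \longrightarrow \PP^N_U
\end{equation*}
of $\varphi$, which by construction is fiberwise finite of degree $2$ onto its image.

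It remains to compare $\Phi$ and $\Phi'$. Since $L = l K_X$, Lemma~\ref{lemma.deformation.multiples.canonical} (case (2)) applies to both families and shows that, after possibly shrinking $U$ once more, for every $z \in U$ the morphisms $\Phi_z$ and $\Phi'_z$ are both induced by the complete linear series $|\omega_{\mathcal X_z}^{\otimes l}|$. Therefore $\Phi_z$ and $\Phi'_z$ coincide up to an automorphism of $\PP^N$. In particular $\Phi_z$ is finite of degree $2$ onto its image, which is a deformation of $i(Y) \subset \PP^N$, as required.

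The only delicate step is the first one: it hinges on the fact that the hypotheses of Proposition~\ref{prop.general.type} (i.e., $X$ minimal of general type of dimension $\geq 2$) are enough to force $H^1(\mathcal T_Y \otimes \mathcal E) = 0$ without any further base-point-freeness assumption on $B$, which is precisely what makes this proposition a genuine complement to Theorem~\ref{thm.deformation.general.type}; the remaining cohomological vanishings on $Y$ and the application of Wehler and Horikawa are then entirely formal, and Lemma~\ref{lemma.deformation.multiples.canonical} takes care of identifying the two deformations.
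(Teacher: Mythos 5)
Your proposal is correct and is essentially the paper's own argument: the paper proves this proposition by declaring it identical to the proof of Proposition~\ref{prop.Fano.deform.lK}, with the vanishing $H^1(\mathcal T_Y \otimes \mathcal E)=0$ now supplied by Proposition~\ref{prop.general.type} and the identification of the two deformations by case (2) of Lemma~\ref{lemma.deformation.multiples.canonical} (note that the regularity of $X$ needed there is guaranteed by Remark~\ref{remark.X.regular}). Nothing further is required.
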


\begin{remark}
 {\rm If $X$, $Y$ and  $\varphi$  are as 
 in Theorem~\ref{thm.deformation.general.type}
or Proposition~\ref{prop.general.type.deform.lK} 
and $Y$ is a projective bundle 
over $\PP^1$, but neither is $B$ base--point-free in the statement
of Theorem~\ref{thm.deformation.general.type} nor 
the condition on $L$ in Proposition~\ref{prop.general.type.deform.lK} 
hold, still 
something can be said about the deformations of $\varphi$. 
Indeed, it 
follows from the same reasons argued in 
Remark~\ref{remark.Fano.Wehler} that, 
for any 
deformation $\mathcal X$ of $X$, there exists a 
deformation of $(\mathcal X,\Phi)$ of $(X,\varphi)$
which is finite and of degree $2$ onto its image.}
\end{remark}

\noindent 
{\bf Acknowledgements.}
{We thank Edoardo Sernesi 
 for helpful conversations.}

\end{document}